\newtheorem{theorem}{Theorem}[section]
\newtheorem{lemma}[theorem]{Lemma}
\newtheorem{note}[theorem]{Note}
\newtheorem{prop}[theorem]{Proposition}
\newtheorem{cor}[theorem]{Corollary}
\newtheorem*{Theorem1'}{Theorem 1'}
\theoremstyle{definition}
\newtheorem{definition}[theorem]{Definition}
\newtheorem{example}[theorem]{Example}
\theoremstyle{remark}
\numberwithin{equation}{section}
\newcommand \g{{\mathfrak g}}
\newcommand \h{{\mathfrak h}}
\newcommand  \s{{\mathfrak s}}
\renewcommand \r{{\mathfrak r}}
\newcommand \e{{\mathfrak e}}
\renewcommand \t{{\mathfrak t}}
\newcommand \Hom{{\mathrm {Hom}}}
\newcommand \gl{{\mathfrak {gl}}}
\renewcommand \sl{{\mathfrak {sl}}}
\newcommand \so{{\mathfrak {so}}}
\newcommand \sy{{\mathfrak {sp}}}
\newcommand \C{{\mathbb C}}
\newcommand \Z{{\mathbb Z}}
\newcommand \la{{\lambda}}
\newcommand \al{{\alpha}}
\begin{document}

\title[Racah-Wigner $6j$-symbol and uniserial $\sl(2)\ltimes V(m)$-modules]
{A new interpretation of the Racah-Wigner $6j$-symbol
and the classification of uniserial $\sl(2)\ltimes V(m)$-modules}

\author{Leandro Cagliero}
\address{CIEM-CONICET, FAMAF-Universidad Nacional de C\'ordoba, C\'ordoba, Argentina.}
\curraddr{Math. Department, MIT, Cambridge, MA 02139-4307, USA.}
\email{cagliero@famaf.unc.edu.ar}
\thanks{The first author was supported in part by CONICET and SECYT-UNC grants.}

\author{Fernando Szechtman}
\address{Department of Mathematics and Statistics, Univeristy of Regina, Canada}
\email{fernando.szechtman@gmail.com}
\thanks{The second author was supported in part by an NSERC discovery grant}

%    General info
\subjclass[2000]{17B10, 22E70}

%\date{January 1, 2001 and, in revised form, June 22, 2001.}

%\dedicatory{This paper is dedicated to our advisors.}

\keywords{Uniserial or indecomposable module; perfect Lie algebra;
Racah-Wigner $6j$-symbol}

\begin{abstract} All Lie algebras and representations will be
assumed to be finite dimensional over the complex numbers. Let
$V(m)$ be the irreducible $\sl(2)$-module with highest weight $m\geq 1$ and consider the perfect Lie algebra
$\g=\sl(2)\ltimes V(m)$. Recall that a $\g$-module is uniserial
when its submodules form a chain. In this paper we classify all
uniserial $\g$-modules. The main family of uniserial $\g$-modules
is actually constructed in greater generality for the perfect Lie
algebra $\g=\s\ltimes V(\mu)$, where $\s$ is a semisimple Lie
algebra and $V(\mu)$ is the irreducible $\s$-module with highest
weight $\mu\neq 0$. The fact that the members of this family are,
but for a few exceptions of lengths 2, 3 and~4, the only uniserial
$\sl(2)\ltimes V(m)$-modules depends in an essential manner on the
determination of certain non-trivial zeros of Racah-Wigner
$6j$-symbol.
\end{abstract}

%{
%\color{blue}
\maketitle
%}

\section{Introduction}
\label{intro}

All Lie algebras and representations considered in this paper are
assumed to be finite dimensional over the complex numbers.

The problem of classifying all indecomposable modules of a given
Lie algebra (or a family of Lie algebras) is usually hard. Very serious
difficulties are encountered even for Lie algebras of very low
dimensionality.  Two notoriously difficult examples are furnished by the 2-dimensional abelian Lie algebra
(see \cite{GP}, Corollary 1) and the 3-dimensional Euclidean Lie algebra
$\mathfrak{e}(2)=\so(2)\ltimes \C^2$ (see \cite{Sa}, Theorem 4.3).

As is well-known, the classification problem has a satisfactory
answer for the class of all semisimple Lie algebras,
and perfect Lie algebras possess favorable
properties that make them suitable for consideration in this
problem. Indeed, it is precisely the class of Lie algebras that
enjoy an abstract Jordan decomposition \cite{CS}, a crucial tool
needed for the classification of the irreducible representations
of semisimple Lie algebras. Additionally, a Lie algebra $\g$ is
perfect if and only if its solvable radical $\r$ coincides with
its nilpotent radical $[\g,\r]$. Thus, a perfect Lie algebra $\g$
has a Levi decomposition $\g=\s\ltimes \r$, where $\s$ is
semisimple and the solvable radical $\r$ acts trivially on every
irreducible $\g$-module, and hence nilpotently on every
$\g$-module. Further positive features of perfect Lie algebras,
from the representation theory point of view, can be found in
~\S\ref{sec.Matrix recognition}.

 But even for the easiest perfect Lie algebra (other than
semisimple), namely
 $\g=\sl(2)\ltimes\C^2$, the classification of the indecomposable representations
 is far from being achieved (see \cite{DR} and \cite{Pi}).
Therefore a natural approach to this problem is to identify a
distinguished class of indecomposable representations for which
one could expect to obtain a reasonable classification.

This line of research has been followed in a number of papers. For
instance, some authors have considered embeddings of a given $\g$
into a semisimple Lie algebra $\tilde\g$ and considered the
highest weight modules of $\tilde\g$ to construct or classify
indecomposable $\g$-modules obtained by restriction. See, for
instance, \cite{Ca}, \cite{CMS}, \cite{Dd}, \cite{DP}, \cite{DR},
\cite{Pr}. On the other hand, A. Piard \cite{Pi} obtained a
classification of all indecomposable $\g$-modules $V$, where
$\g=\sl(2)\ltimes\r$, $\r=\C^2$ and $V/\r V$ is irreducible.

\medskip

In this paper we focus our attention on the class of uniserial
representations. Recall that a module $V$ is uniserial if its
submodules form a chain. We think that, within this
class, a classification can be achieved for certain families of
Lie algebras and, moreover, that the members of this
class might be viewed as building blocks to understand more
general classes of indecomposable representations.
Indeed, the following two facts support this belief.

First, one of the main results of this paper gives a complete
classification of all uniserial $\g$-modules for $\g=\sl(2)\ltimes
V(m)$, where $V(m)$ is the irreducible $\sl(2)$-module with
highest weight $m\geq 1$. As far as we know, this is the first
time that a structurally defined class of indecomposable modules,
other than the irreducible ones, has been simultaneously
classified for all members of an infinite family of Lie algebras.
Moreover, in contrast to the case of the indecomposable modules of
the abelian and Euclidean Lie algebras, a classification of all
uniserial modules for these and many other solvable Lie algebras
is attained in \cite{CS3}.

Secondly, uniserial modules are also considered  (see \cite{BH},
\cite{HZ}, \cite{HZ2}) as a starting point in terms of
classification and as building blocks of other indecomposable
modules in the case of certain finite dimensional associative
algebras (notice that the Jordan Normal Form Theorem states that
any $\C[x]$-module is a direct sum of uniserial modules). In the
context of Lie algebras, all of Piard's indecomposable
$\sl(2)\ltimes V(1)$-modules mentioned above, as well as further
indecomposable modules for more general perfect Lie algebras, can
be constructed as a series of extensions of uniserial modules (see
\cite{CS4}).

\medskip

A crucial step in the proof of our classification requires the
determination of non-trivial zeros of the (classical) \emph{Racah-Wigner
$6j$-symbol} within certain parameters. The $6j$-symbol is a real
number $\left\{
\begin{matrix}
j_1 \;  j_2 \;  j_3 \\
j_4 \;  j_5 \;  j_6
\end{matrix}
\right\}$ associated to six non-negative half-integer numbers
$j_1$, $j_2$, $j_3$, $j_4$, $j_5$ and $j_6$, originally studied
because it plays a central role in angular momentum theory (see
for instance \cite{CFS}, \cite{Ed}, \cite{RBMW}).

Our results provide a bridge between two related but different lines of
research and we think that very interesting connections of this
kind will appear by considering other families of Lie algebras.

\subsection{Main results} Let $\g$ be a Lie algebra with solvable radical $\r$ and Levi
decomposition $\g=\s\ltimes \r$.

Let $V$ be a $\g$-module and let  $0=V_0\subset V_1\subset
\cdots\subset V_n=V$ be a composition series of $V$. By Lie's
theorem $\r$ acts via scalar operators on each composition factor
$W_i=V_i/V_{i-1}$, $1\leq i\leq n$. In particular every $W_i$ is
an irreducible $\s$-module.

The socle series
$0=\mathrm{soc}^{0}(V)\subset\mathrm{soc}^{1}(V)\subset
\cdots\subset \mathrm{soc}^{m}(V)=V$ is inductively defined by
declaring $\mathrm{soc}^{i}(V)/\mathrm{soc}^{i-1}(V)$ to be socle
of $V/\mathrm{soc}^{i-1}(V)$, that is, the sum of all irreducible
submodules of $V/\mathrm{soc}^{i-1}(V)$, for $1\leq i\leq m$.

 As indicated earlier, $V$ is uniserial if it has only one composition
series, i.e., if the socle series of $V$ has irreducible factors.
A uniserial module is clearly indecomposable. A sequence
$W_1,...,W_n$ of irreducible $\s$-modules will be said to be {\em
admissible} if there is a uniserial $\g$-module with socle factors
$\s$-isomorphic to $W_1,...,W_n$. By considering dual modules, it
is clear that a sequence $W_1,...,W_n$ is admissible if and only
if so is $W^*_n,...,W^*_1$.

In general the classification of uniserial $\g$-modules breaks
down into two steps:

\noindent{\sc Step 1.} Determine all admissible sequences.

\noindent{\sc Step 2.} Given an admissible sequence, find all
uniserial modules giving rise to~it.

As mentioned earlier, perfect Lie algebras are well suited for
consideration in this problem. Indeed, from $\g=\s\ltimes \r$ we
obtain the Levi decomposition $[\g,\g]=\s\ltimes [\g,\r]$. Thus
$\g=[\g,\g]$ if and only if $\r=[\g,\r]$. Here $[\g,\r]=\r\cap
[\g,\g]$ is not only the solvable radical of $[\g,\g]$ but also
the nilpotent radical of $\g$, i.e. the ideal of all $x\in \g$
such that $xV=0$ for every irreducible $\g$-module $V$. It follows
that $\g$ is perfect if and only if $\r$ annihilates every
irreducible $\g$-module. Thus if $\g$ is perfect an irreducible
$\g$-module is nothing but an irreducible $\s$-module annihilated
by $\r$. More generally, if $\g$ is perfect then the terms of the
socle series of $V$ can be intrinsically obtained from $\r$ as
follows: $\mathrm{soc}^{i}(V)/\mathrm{soc}^{i-1}(V)$, for $1\leq
i\leq n$, is simply the is the 0-weight space for the action of
$\r$ on $V/\mathrm{soc}^{i-1}(V)$.

We begin our paper in \S\ref{sec.Matrix recognition} and
\S\ref{secext} by furnishing general criteria to recognize,
construct and classify uniserial modules for perfect Lie algebras with abelian radical.
These results turn out to be fundamental for the rest of the
paper. As a first application we prove in \S\ref{hist} the
following theorem.

\begin{theorem}
\label{uk} Let $\s$ be a non-zero semisimple Lie algebra. Let
$b\in\Z_{\geq 0}$ and let $\lambda$ and $\mu$ be dominant integral
weights of $\s$, where $\mu\neq 0$ and $\mu^*$ is the highest
weight of $V(\mu)^*$. Consider the perfect Lie algebra
$\g=\s\ltimes V(\mu)$. Then, up to isomorphism, there exists one
and only one uniserial $\g$-module, say $Z(\la,b)$, with socle
factors $V(\la),V(\la+\mu^*),...,V(\la+b\mu^*)$.
\end{theorem}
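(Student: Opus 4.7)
The plan is to apply the general framework for uniserial modules over perfect Lie algebras with abelian radical developed in \S\ref{sec.Matrix recognition}--\S\ref{secext}. For any uniserial $\g$-module $V$ with the prescribed socle factors, complete reducibility of $\s$ splits $V\cong\bigoplus_{i=0}^b V(\la+i\mu^*)$ as an $\s$-module; perfectness of $\g$ forces the radical $V(\mu)$ to lower the socle filtration, and the general framework lets one adjust the splitting so that the $V(\mu)$-action strictly shifts $V(\la+i\mu^*)$ into $V(\la+(i-1)\mu^*)$. The $\g$-structure is then encoded in a sequence of nonzero $\s$-equivariant maps
\begin{equation*}
   f_i\colon V(\mu)\otimes V(\la+i\mu^*)\longrightarrow V(\la+(i-1)\mu^*),\qquad 1\le i\le b,
\end{equation*}
subject to the abelianness relation $f_{i-1}(x,f_i(y,v))=f_{i-1}(y,f_i(x,v))$ for all $x,y\in V(\mu)$ and $v\in V(\la+i\mu^*)$.

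The crucial representation-theoretic input is a pair of multiplicity-one assertions. By Hom--tensor adjunction and the Cartan component theorem,
\begin{equation*}
   \dim\Hom_\s\bigl(V(\mu)\otimes V(\la+i\mu^*),\,V(\la+(i-1)\mu^*)\bigr)=1,
\end{equation*}
because $V(\la+i\mu^*)$ is the Cartan (highest-weight) summand of $V(\mu^*)\otimes V(\la+(i-1)\mu^*)$, where it occurs with multiplicity one. The same reasoning, applied with the Cartan summand $V(2\mu^*)\subseteq V(\mu^*)^{\otimes 2}$, yields
\begin{equation*}
   \dim\Hom_\s\bigl(V(\mu)^{\otimes 2}\otimes V(\la+i\mu^*),\,V(\la+(i-2)\mu^*)\bigr)=1,
\end{equation*}
and since the generator of this Hom-space factors through $V(2\mu^*)\subset S^2V(\mu^*)$, every element of it is automatically symmetric in the two $V(\mu)$-factors.

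For existence, I would pick a nonzero $f_i$ (unique up to scalar) in each one-dimensional Hom-space above. The composite $f_{i-1}\circ(\mathrm{id}\otimes f_i)$ lies in the symmetric one-dimensional Hom-space just described, so the abelianness relation holds automatically --- this is the step I expect to be the main obstacle, cleanly resolved here by the Cartan-component symmetry. The resulting $\g$-module on $\bigoplus V(\la+i\mu^*)$ has the prescribed socle series: the set $\{v\in V(\la+i\mu^*):f_i(x,v)=0\text{ for all }x\in V(\mu)\}$ is an $\s$-submodule of the irreducible $V(\la+i\mu^*)$, and is proper since $f_i\ne 0$, hence zero. Unpacking this inductively identifies $\soc^k(V)$ with $\bigoplus_{i<k}V(\la+i\mu^*)$, giving uniseriality with the stated socle factors.

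Finally, for uniqueness, given two uniserial modules $Z,Z'$ with the prescribed socle factors, the framework produces sequences $(f_i)$ and $(f_i')$ of action maps, and by multiplicity-one $f_i'=c_i f_i$ for some scalars $c_i\in\C^\times$. Successively rescaling the isotypic component $V(\la+i\mu^*)\subseteq Z'$ by an appropriate scalar absorbs the constants $c_i$ one at a time, producing a $\g$-module isomorphism $Z\cong Z'$.
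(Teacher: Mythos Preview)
Your approach is essentially the paper's: existence via Proposition~\ref{xrite} and uniqueness via Proposition~\ref{d}, with the key inputs being highest-weight (Cartan component) multiplicity computations. Your abelianness argument—that $\Hom_\s\bigl(V(\mu)^{\otimes 2}\otimes V(\la+i\mu^*),\,V(\la+(i-2)\mu^*)\bigr)$ is one-dimensional and factors through $S^2V(\mu^*)$—is an equivalent reformulation of the paper's observation that $\Lambda^2 V(\mu^*)\otimes V(\la+i\mu^*)$ has all weights strictly below $\la+(i+2)\mu^*$.

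There is, however, a genuine gap in your uniqueness argument. You assert that ``the general framework lets one adjust the splitting so that the $V(\mu)$-action strictly shifts $V(\la+i\mu^*)$ into $V(\la+(i-1)\mu^*)$.'' Nothing in \S\ref{sec.Matrix recognition}--\S\ref{secext} says this; in fact, since the socle factors $V(\la+i\mu^*)$ are pairwise non-isomorphic (as $\mu^*\ne 0$), the $\s$-splitting is \emph{unique}, and there is no freedom to adjust it. What you actually need is that the blocks $(i,j)$ with $j-i\ge 2$ vanish automatically, i.e.,
\[
\Hom_\s\bigl(V(\mu),\,\Hom(V(\la+j\mu^*),V(\la+i\mu^*))\bigr)=0\quad\text{for }j-i\ge 2.
\]
This is precisely condition~(b) of Proposition~\ref{d}, and the paper verifies it explicitly: by adjunction it becomes $\Hom_\s\bigl(V(\la+j\mu^*),\,V(\mu^*)\otimes V(\la+i\mu^*)\bigr)$, and every weight of $V(\mu^*)\otimes V(\la+i\mu^*)$ is $\le \la+(i+1)\mu^*<\la+j\mu^*$. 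This is the same highest-weight reasoning you already use, so the fix is trivial—but without it your uniqueness argument does not go through, because an arbitrary uniserial $Z$ with the given socle factors is not yet known to be encoded by the sequence $(f_i)$ alone.
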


It is then clear that the dual $\g$-module $Z(\la,b)^*$ is, up to
isomorphism, the only uniserial $\g$-module with socle factors
$V(\la^*+b\mu),...,V(\la^*+\mu),V(\la^*)$.

In \S\ref{secex} we exhibit explicit matrix realizations of these
modules for $\g=\sl(2)\ltimes V(m)$ and we spend considerable
effort in \S\ref{ty} to achieve an axiomatic characterization of
$Z(\la,b)$ and $Z(\la,b)^*$, which presents them as a particular
subclass of the class of cyclic indecomposable $\g$-modules.

Other uniserial $\g$-modules are possible for $\g=\s\ltimes
V(\mu)$, as indicated in \S\ref{sec.Other Uniserials}. These
exceptional modules, together with the modules $Z(\la,b)$ and
$Z(\la,b)^*$, comprise all the uniserial modules of
$\g=\sl(2)\ltimes V(m)$. In fact, our main result reads as
follows.

\begin{theorem}
Let $\g=\sl(2)\ltimes V(m)$, where $m\geq 1$. Then, up to a
reversing of the order, the following are the only admissible
sequences for $\g$:

\noindent
\begin{tabular}{ll}
\\[-2mm]
Length 1. & $V(a)$.  \\[2mm]
Length 2. & $V(a),V(b)$, where $a+b\equiv m\mod 2$ and $0\le b-a\leq m\leq a+b$. \\[2mm]
Length 3. & $V(a),V(a+m),V(a+2m)$; or \\[1mm]
      & $V(0),V(m),V(c)$, where $c\equiv 2m \mod 4$ and $c\leq 2m$. \\[2mm]
Length 4. & $V(a),V(a+m),V(a+2m),V(a+3m)$; or \\[1mm]
      & $V(0),V(m),V(m),V(0)$, where $m\equiv 0\mod 4$. \\[2mm]
Length $\geq 5$. &  $V(a),V(a+m),\dots,V(a+s m)$, where $s\geq 4$. \\[2mm]
\end{tabular}

\noindent Moreover, each of these sequences arises from only one
isomorphism class of uniserial $\g$-modules, except for the
sequence $V(0),V(m),V(m),V(0)$, $m\equiv 0\mod 4$. The isomorphism
classes of uniserial $\g$-modules associated to this sequence are
parametrized by the complex numbers.
\end{theorem}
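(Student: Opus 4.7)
The plan is to reduce the classification to computing extensions in the setup of Sections \ref{sec.Matrix recognition} and \ref{secext}, and then to appeal to a fine analysis of the Racah–Wigner $6j$-symbol for the sequences of length three or more.

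\medskip
\textbf{Length 1 and 2.} A length-1 sequence $V(a)$ is trivially admissible (the irreducible $\g$-module $V(a)$ with $V(m)$ acting by zero). For length 2, the criterion from \S\ref{sec.Matrix recognition}--\S\ref{secext} says that $V(a), V(b)$ is admissible exactly when $\Hom_{\sl(2)}(V(a)\otimes V(m), V(b))\ne 0$. By Clebsch--Gordan this forces $a+b\equiv m\pmod 2$ and $|a-b|\le m\le a+b$; after reordering so that $a\le b$, we get $0\le b-a\le m\le a+b$. Because this $\Hom$ space is one dimensional whenever nonzero, all nontrivial extensions are equivalent after rescaling, so each admissible pair gives a unique uniserial module up to isomorphism.

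\medskip
\textbf{Length $\geq 3$: the generic ladder.} Given an admissible sequence $V(a_1),\dots,V(a_n)$, each consecutive pair must be admissible in the length-2 sense, so the factors lie on a ladder in the Clebsch--Gordan graph. The key extension theorem of \S\ref{secext} adds a compatibility condition: the action of $V(m)$ assembles from $\sl(2)$-equivariant projections $p_i\colon V(a_i)\otimes V(m)\to V(a_{i+1})$, and since $V(m)$ is abelian in $\g$, the composition $p_{i+1}\circ(p_i\otimes \mathrm{id})$ must be symmetric under swap of the two $V(m)$ tensor factors. Expanding such a composition in the standard bases and projecting onto each irreducible summand of $V(a_i)\otimes\mathrm{Sym}^2 V(m)$ converts the symmetry condition into a linear combination of Racah--Wigner $6j$-symbols; exactly the same identity appears in Theorem~\ref{uk} as the obstruction for $Z(\la,b)$ to exist. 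The generic nonvanishing of these $6j$-symbols then pins the sequence down to an arithmetic progression $V(a),V(a+m),V(a+2m),\dots,V(a+sm)$, which is indeed realized by the modules $Z(a,s)$ supplied by Theorem~\ref{uk} and is unique by the same argument as in the length-2 case.

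\medskip
\textbf{Exceptional sequences.} To isolate the sequences outside this generic ladder, I would classify all zeros of the length-3 and length-4 compatibility $6j$-expressions within the very restricted range dictated by length-2 admissibility (triangle inequalities, parity, and boundedness by $m$). The triples $V(a),V(a+m),V(a+2m)$ correspond to the generic case; the exceptional zeros should produce exactly $V(0),V(m),V(c)$ with $c\equiv 2m\pmod 4$ and $c\le 2m$ in length 3, and in length 4 the palindromic $V(0),V(m),V(m),V(0)$ with $m\equiv 0\pmod 4$, and no others. A separate check is that in length 4 this palindromic case has a one-parameter family of inequivalent uniserials: the compatibility equation degenerates, so the quotient of the space of valid extensions by the scaling and diagonal $\s$-equivariant automorphisms of the socle factors is one dimensional, giving the parametrization by $\C$ claimed. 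For length $\geq 5$, I would argue inductively that any three consecutive factors $V(a_{i-1}),V(a_i),V(a_{i+1})$ must fit the generic pattern (since the exceptional length-3 patterns cannot be extended on either side while preserving length-2 admissibility and the $6j$-compatibility), so the whole sequence is forced into an arithmetic progression.

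\medskip
\textbf{Main obstacle.} The hard part is the $6j$-symbol analysis underlying both the generic non-vanishing and the identification of the exceptional zeros. Establishing nonvanishing of an infinite family of $6j$-symbols and, separately, a complete list of their zeros in a constrained parameter range, cannot be done by generic positivity arguments; it requires exploiting the classical closed-form expression (e.g.\ the Racah formula or its orthogonality relations) to extract the precise arithmetic conditions $c\equiv 2m\pmod 4$, $c\le 2m$ and $m\equiv 0\pmod 4$. Once this determination is in hand, assembling it with the generic ladder yields the full list of admissible sequences and their parametrizations exactly as stated.
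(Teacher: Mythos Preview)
Your overall strategy matches the paper's: length 2 via Clebsch--Gordan, length 3 via the abelian compatibility condition rephrased as $6j$-symbol vanishing, and length $\geq 4$ by restricting to consecutive triples. Two corrections, however. The commutativity of the radical forces the composite $V(m)\otimes V(m)\to\mathrm{Hom}(V(a_{i+2}),V(a_i))$ to vanish on $\Lambda^2 V(m)$, so the obstruction is indexed by the irreducible summands of $\Lambda^2 V(m)$ (those $V(k)$ with $k\equiv 2m-2\pmod 4$), not of $\mathrm{Sym}^2 V(m)$ as you wrote. Also, Theorem~\ref{uk} contains no $6j$-identity; its existence proof is the elementary observation that $\Lambda^2 V(\mu^*)\otimes V(\la+i\mu^*)$ has no weight as high as $\la+(i+2)\mu^*$.

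The genuine gap is the one you flag yourself: the classification of the zeros of the relevant $6j$-symbols. Saying you will use ``the Racah formula or its orthogonality relations'' is not a proof; this step is the entire technical content of the theorem. In the paper it is Theorem~\ref{thm:contition}, and the crucial implication (3)$\Rightarrow$(4) is obtained not from a closed formula but from a nonvanishing lemma (Lemma~\ref{lemma.non-zero}) proved via the Biedenharn--Elliott three-term recurrence for $6j$-symbols: if one symbol in the family vanishes, the recurrence forces the next two not to, and combining this with the known nonvanishing when one triangle is degenerate pins down the explicit list. That recurrence argument is the missing idea in your proposal. Once it is in place, the length-$\geq 4$ step is also a bit sharper than you sketch: every exceptional length-3 sequence has $V(0)$ at one end, so no interior $a_i$ can be $0$; this immediately forces either a monotone ladder with step $m$ or the single palindrome $0,m,m,0$ when $n=4$.
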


Explicit matrix realizations illustrating this theorem are given
in \S\ref{secex} and \S\ref{sec.Other Uniserials}.

\smallskip

One major step towards the proof of the above theorem is the
determination of all admissible sequences of length 3 for
$\g=\sl(2)\ltimes V(m)$ and this is done in~\S\ref{sec.length3}.
From the results of \S\ref{sec.Matrix recognition} and
\S\ref{secext}, it follows that $V(a),V(b),V(c)$ is an admissible
sequence of length 3 for $\g$ if and only if $V(m)$
enters $V(a)\otimes V(b)$ and $V(b)\otimes V(c)$, and
$\mathcal{L}$ is abelian, where $\mathcal{L}$ is the Lie
subalgebra of $\gl(V)$, with $V=V(a)\oplus V(b)\oplus V(c)$,
generated by $f(r)+g(r)$, $r\in V(m)$, and
$f:V(m)\to\mathrm{Hom}(V(b),V(a))$ as well as
$g:V(m)\to\mathrm{Hom}(V(c),V(b))$ are $\sl(2)$-embeddings.
In
terms of matrices,  $\mathcal{L}$ is the Lie subalgebra of
$\gl(a+b+c+3,\C)$ generated by $
 \left\{\left(\begin{smallmatrix}
    0 & f(r) & 0 \\
    0 & 0 & g(r) \\
    0 & 0 & 0 \\
  \end{smallmatrix}
\right): r\in V(m)\right\}$.
The determination of
all $a$, $b$, $c$ and $m$ for which $\mathcal{L}$ is abelian
requires the following theorem from \S\ref{cal8}.

\begin{theorem}\label{thm.6jIntro}
Let $a,b,c,p,q,k$ be non-negative integers for which there exist
$\sl(2)$-embeddings
\[
 V(k)\overset{f_0}{\longrightarrow}\mathrm{Hom}(V(c),V(a)),
\quad V(k)\overset{f_1}{\longrightarrow}V(p)\otimes V(q),
\]
\[
 V(p)\overset{f_2}{\longrightarrow}\mathrm{Hom}(V(b),V(a)),\quad
 V(q)\overset{f_3}{\longrightarrow}\mathrm{Hom}(V(c),V(b));
\]
and let
$
\left\{\begin{matrix}
\frac{q}{2} \; \frac{k}{2} \; \frac{p}{2} \\[1mm]
\frac{a}{2} \; \frac{b}{2} \; \frac{c}{2}
\end{matrix}
\right\}
$
be the  Racah-Wigner $6j$-symbol associated to them.
If
\[
 f_4=\left\{\begin{matrix}
\frac{q}{2} \; \frac{k}{2} \; \frac{p}{2} \\[1mm]
\frac{a}{2} \; \frac{b}{2} \; \frac{c}{2}
\end{matrix}
\right\}f_0
\]
then, after a suitable normalization of $f_i$, $i=0,1,2,3,4$, the
following diagram of $\sl(2)$-morphisms is commutative, where $g$ sends
$\alpha\otimes\beta\to\alpha\beta$:
\[
 \begin{array}[c]{ccc}
V(k)&\xrightarrow{\hspace{.5cm}\displaystyle f_4\hspace{.5cm}} &\mathrm{Hom}(V(c),V(a))\\[2mm]
{f_1}\Big\downarrow&&\Big\uparrow g\\[2mm]
V(p)\otimes V(q) &\xrightarrow{\;\;\displaystyle
f_2\otimes f_3\;\;}
    &\mathrm{Hom}(V(b),V(a))\otimes
\mathrm{Hom}(V(c),V(b)).
\end{array}
\]
In particular, $V(k)$ appears in the image of
$g(f_2\otimes f_3)$ if and only if $\left\{\begin{matrix}
\frac{q}{2} \; \frac{k}{2} \; \frac{p}{2} \\[1mm]
\frac{a}{2} \; \frac{b}{2} \; \frac{c}{2}
\end{matrix}
\right\}\ne0$.
\end{theorem}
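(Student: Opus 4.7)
\smallskip

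\textbf{Proof proposal.}
The plan is to reduce the statement to identifying a single scalar of proportionality, and then to recognize that scalar as the $6j$-symbol via its classical recoupling definition.

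First I would observe that, thanks to the multiplicity-free Clebsch--Gordan decomposition for $\sl(2)$, every irreducible constituent of $V(a)\otimes V(c)\cong\Hom(V(c),V(a))$ occurs with multiplicity at most one. Hence by Schur's lemma the space $\Hom_{\sl(2)}\!\bigl(V(k),\Hom(V(c),V(a))\bigr)$ is at most one-dimensional, and by the hypothesized existence of $f_0$ it is exactly one-dimensional. Both $f_4$ and the composite $g\circ(f_2\otimes f_3)\circ f_1$ lie in this line, so there is a unique scalar $\lambda$ with
\[
g\circ(f_2\otimes f_3)\circ f_1 \;=\; \lambda\,f_0.
\]
The theorem then asserts that, for a suitable choice of normalizations of the $f_i$, $\lambda$ coincides with the $6j$-symbol in question.

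Next I would translate everything into the standard adjoint picture. Using $V(\cdot)^*\cong V(\cdot)$ for $\sl(2)$ and the tensor-hom adjunction, the maps $f_0$, $f_2$, $f_3$ become Clebsch--Gordan embeddings
\[
\tilde f_0:V(c)\otimes V(k)\to V(a),\quad \tilde f_2:V(b)\otimes V(p)\to V(a),\quad \tilde f_3:V(c)\otimes V(q)\to V(b),
\]
while $f_1:V(k)\to V(p)\otimes V(q)$ is already a Clebsch--Gordan embedding. A short diagram chase, unwinding the adjunctions, shows that $g\circ(f_2\otimes f_3)\circ f_1=\lambda f_0$ is equivalent to the assertion that the two $\sl(2)$-equivariant maps $V(c)\otimes V(k)\to V(a)$ given by $\lambda\tilde f_0$ and by the composite
\[
V(c)\otimes V(k)\xrightarrow{1\otimes f_1}V(c)\otimes V(p)\otimes V(q)\xrightarrow{\mathrm{swap}}V(c)\otimes V(q)\otimes V(p)\xrightarrow{\tilde f_3\otimes 1}V(b)\otimes V(p)\xrightarrow{\tilde f_2}V(a)
\]
agree. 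These are precisely the two ways of projecting $V(c)\otimes V(q)\otimes V(p)$ down to $V(a)$ using the two standard coupling schemes: via the intermediate irreducible $V(b)$ on the right, and via the intermediate irreducible $V(k)$ on the left.

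Finally, I would invoke the classical definition of the Racah--Wigner $6j$-symbol as the transition coefficient between these two coupling schemes on $\Hom_{\sl(2)}\!\bigl(V(c)\otimes V(q)\otimes V(p),V(a)\bigr)$. In the standard normalization for Clebsch--Gordan embeddings (e.g.\ that of Edmonds or Varshalovich), the change-of-basis matrix between the bases indexed by $b$ and by $k$ has entries precisely the asserted $\{6j\}$-symbols, so $\lambda$ equals this symbol; the final claim follows at once, since $V(k)\subseteq\mathrm{im}\,g(f_2\otimes f_3)$ if and only if $\lambda\ne 0$. The main obstacle is the bookkeeping of normalizations: the literature offers several inequivalent sign and phase conventions for both the Clebsch--Gordan coefficients and the $6j$-symbol, and the ``suitable normalization'' clause in the theorem is where one must pin the $f_i$ down, by prescribing their action on a fixed highest-weight vector, so as to reproduce exactly the conventional $6j$-symbol rather than a scalar multiple of it.
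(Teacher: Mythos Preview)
Your approach is genuinely different from the paper's. The paper does not invoke the recoupling definition at all; instead, in \S\ref{cal8} it fixes explicit highest-weight vectors, writes the composite $\phi=\mu\circ(\iota_p^{a,b}\otimes\iota_q^{b,c})\circ\iota_k^{p,q}$ out as a quadruple sum, collapses it (via a binomial identity, Lemma in \S\ref{cal8}) to a single sum, and then recognizes that sum as one of Racah's closed formulas for the $6j$-symbol, obtaining $\lambda=C\cdot\{6j\}$ with an explicit nonzero $C$. Your reduction to a single scalar via Schur's lemma and your translation to the ``two coupling schemes'' picture are sound and more conceptual; in fact the authors explicitly remark in the Introduction that a proof ``based only on the original definition of the $6j$-symbol and the representation theory of $\sl(2)$'' remained elusive to them, so what you outline is exactly the argument they were unable to close.

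The gap in your proposal is precisely where you yourself flag the obstacle, and it is not merely cosmetic. In the standard normalizations (see the appendix, \S\ref{Subsec.Clebsch-Gordan}) the transition matrix between the two coupling bases is \emph{not} the $6j$-symbol on the nose: the defining identity there carries factors of the form $(j_r+j_s+j_t+1)\Delta(j_r,j_s,j_t)$ and explicit phases on both sides. Your duality/contraction step (turning $g$ into an evaluation $V(b)\otimes V(b)\to\C$) contributes further factors and a sign from the self-duality isomorphism~(\ref{ktodual}). So what the argument actually yields is $\lambda=C'\cdot\{6j\}$ for some $C'$; for the theorem as stated you must still check that $C'$ factors as $c_1(p,q,k)\,c_2(a,b,p)\,c_3(b,c,q)\,c_0(a,c,k)^{-1}$, so that it can be absorbed into independent rescalings of $f_0,\dots,f_3$. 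The paper's explicit computation of $C$ in Theorem~\ref{Thm.Scalar for phi} shows this factorization holds (each $\Delta$ and each $(j_r+j_s+j_t+1)$ attaches to one triangle, and the residual sign $(-1)^{x_k^{a,c}+b+k}$ splits as well), but your sketch stops before establishing it. The ``in particular'' clause, which only needs $C'\neq 0$, does follow from your argument once one observes that all the stray factors are manifestly nonzero.
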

Recall that the (classical)
\emph{Racah-Wigner $6j$-symbol} is a real number
$\left\{
\begin{matrix}
j_1 \;  j_2 \;  j_3 \\
j_4 \;  j_5 \;  j_6
\end{matrix}
\right\}$
associated
to six
non-negative
half-integer numbers $j_1$, $j_2$, $j_3$, $j_4$, $j_5$, $j_6$ and,
originally, it is implicitly defined (see \cite{CFS})
in terms of the transition matrix between
the following two basis of
$\text{Hom}_{\sl(2)}\big(V(k),V(a)\otimes V(b)\otimes V(c)\big)$
\[
\big\{V(k)\to V(p)\otimes V(c)\to V(a)\otimes V(b)\otimes
V(c)\big\}_{p\ge0}
\]
and
\[
\big\{V(k)\to V(a)\otimes V(q)\to V(a)\otimes V(b)\otimes
V(c)\big\}_{q\ge0}.
\]
In fact, the $6j$-symbol can be defined as above in a more general context,
in particular for any semisimple multitensor category, see for instance
\cite{EFK}.

As far as we know, Theorem \ref{thm.6jIntro} is not known and
gives a new and clean definition of the Racah-Wigner $6j$-symbol
that is explicit, in contrast to the original
implicit definition. Even though there are several formulas
expressing the $6j$-symbol as a sum rational numbers we did not
find in the literature any explicit structural definition of the
$6j$-symbol in terms of the representation theory of $\sl(2)$. 
Our
proof of Theorem \ref{thm.6jIntro} is based on a long and
technical computation (performed in~\S\ref{cal8}), and it remains
elusive to us a proof of it based only on the original definition
of the $6j$-symbol and the representation theory of $\sl(2)$.

In this paper, we use Theorem \ref{thm.6jIntro}
to determine when the Lie algebra $\mathcal{L}$
mentioned above is abelian and we obtain the following result.

\begin{theorem}
Let $a$, $b$, $c$ and $m$ be non-negative integers such that $V(m)$ is
an $\sl(2)$-submodule of both $V(a)\otimes V(b)$ and $V(b)\otimes
V(c)$. Let $\mathcal{J}$ be the image of $\Lambda^2(V(m))$ under
the map
\begin{equation*}
V(m)\otimes V(m)\rightarrow\mathrm{Hom}(V(b),V(a))\otimes
\mathrm{Hom}(V(c),V(b))\to \mathrm{Hom}(V(c),V(a)).
\end{equation*}

Then the following conditions are equivalent:

\begin{enumerate}[(1)]

\item $\mathcal{L}$ is abelian.

\item $\mathcal{J}=0$.

\item $
\left\{\begin{matrix}
\frac{m}{2} \; \frac{k}{2} \; \frac{m}{2} \\[1mm]
\frac{a}{2} \; \frac{b}{2} \; \frac{c}{2}
\end{matrix}
\right\}= 0 $ for all non-negative integers $k$ satisfying
$k\equiv 2m-2 \mod 4 $.

\item Up to a swap of $a$ and $c$ we have: $c=0$, $b=m$, $a\equiv
2m \mod 4$ and $a\leq 2m$; or $b=c+m$ and $a=c+2m$. \item
$\Lambda^2(V(m))$ is disjoint from $\mathrm{Hom}(V(c),V(a))$.

\item There is a uniserial $\sl(2)\ltimes V(m)$-module with socle
factors $V(a),V(b),V(c)$.

\end{enumerate}
\end{theorem}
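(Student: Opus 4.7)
My plan is to establish the six-way equivalence via the cycle $(6)\Leftrightarrow(1)\Leftrightarrow(2)\Leftrightarrow(3)\Leftrightarrow(4)\Rightarrow(5)\Rightarrow(2)$, using Theorem \ref{thm.6jIntro} as the bridge between the Lie-theoretic side and the $6j$-symbol side. The equivalence $(1)\Leftrightarrow(6)$ is already recorded in the paragraph preceding Theorem \ref{thm.6jIntro}, and $(5)\Rightarrow(2)$ is immediate: the map $V(m)\otimes V(m)\to\Hom(V(c),V(a))$ is $\sl(2)$-equivariant, so whenever source and target share no irreducible constituent the restriction to any $\sl(2)$-submodule vanishes, in particular on $\Lambda^{2}(V(m))$.

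For $(1)\Leftrightarrow(2)$ I would use a direct block-matrix commutator computation, namely
\[
\left[\begin{pmatrix}0&f(r)&0\\0&0&g(r)\\0&0&0\end{pmatrix},\begin{pmatrix}0&f(s)&0\\0&0&g(s)\\0&0&0\end{pmatrix}\right]=\begin{pmatrix}0&0&f(r)g(s)-f(s)g(r)\\0&0&0\\0&0&0\end{pmatrix},
\]
so that $\mathcal{L}$ is abelian precisely when the skew-symmetric map $r\wedge s\mapsto f(r)g(s)-f(s)g(r)$ is identically zero, which is exactly the vanishing of $\mathcal{J}$. For $(2)\Leftrightarrow(3)$ I would invoke the Clebsch--Gordan decomposition $V(m)\otimes V(m)=V(2m)\oplus V(2m-2)\oplus\cdots\oplus V(0)$, together with the fact that its antisymmetric summand $\Lambda^{2}(V(m))$ consists exactly of the constituents $V(k)$ with $k\equiv 2m-2\pmod{4}$. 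Theorem \ref{thm.6jIntro} then computes the scalar by which the composition $V(k)\hookrightarrow V(m)\otimes V(m)\to\Hom(V(c),V(a))$ lands inside the (essentially unique) copy of $V(k)$ in $\Hom(V(c),V(a))$, and this scalar is precisely $\left\{\begin{smallmatrix}m/2&k/2&m/2\\a/2&b/2&c/2\end{smallmatrix}\right\}$. Hence $\mathcal{J}=0$ if and only if each such $6j$-symbol vanishes, noting that those for which $V(k)$ is not a constituent of $\Hom(V(c),V(a))$ are automatically zero under the $6j$-symbol conventions.

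The principal difficulty is $(3)\Leftrightarrow(4)$: identifying exactly which quadruples $(a,b,c,m)$ kill every $6j$-symbol of the indicated shape. Here I would first exploit the standing hypotheses $V(m)\subset V(a)\otimes V(b)$ and $V(m)\subset V(b)\otimes V(c)$, which via the Clebsch--Gordan rule restrict $(a,b,c)$ to a narrow strip, and then apply an explicit closed formula for the $6j$-symbol---the Racah sum formula being the natural choice---in order to convert the simultaneous vanishing of the whole family of $6j$-symbols into a system of polynomial/rational identities in $a,b,c,m$ that must hold for every admissible $k\equiv 2m-2\pmod{4}$. A careful case analysis, splitting for example on the position of $b$ relative to $a$ and $c$ and tracking when the resulting alternating sums collapse, should isolate the two alternatives listed in $(4)$; I expect this to be the main technical obstacle and the step that absorbs most of the effort.

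Finally, to close the cycle I would verify $(4)\Rightarrow(5)$ directly, case by case. In the branch $b=c+m$, $a=c+2m$, one computes $\Hom(V(c),V(a))\cong V(c)\otimes V(c+2m)=V(2m)\oplus V(2m+2)\oplus\cdots\oplus V(2c+2m)$, whose constituents all have label at least $2m$, whereas the constituents of $\Lambda^{2}(V(m))$ all satisfy $k\le 2m-2$, so the two are disjoint. In the branch $c=0$, $b=m$, $a\equiv 2m\pmod{4}$ with $a\le 2m$, one has $\Hom(V(c),V(a))=V(a)$, and the residue condition $a\equiv 2m\pmod{4}$ is incompatible with the residue $k\equiv 2m-2\pmod{4}$ required for membership in $\Lambda^{2}(V(m))$, so again the two are disjoint. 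This closes the cycle and completes the proof of the six-way equivalence.
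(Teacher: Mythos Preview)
Your overall cycle and the easy implications $(1)\Leftrightarrow(2)$, $(2)\Leftrightarrow(3)$, $(4)\Rightarrow(5)$, $(5)\Rightarrow(2)$, and $(1)\Leftrightarrow(6)$ match the paper's argument essentially verbatim (the paper packages $(2)\Leftrightarrow(3)$ as Corollary~\ref{cvu} and $(1)\Leftrightarrow(6)$ as a direct consequence of Proposition~\ref{xrite}). The only substantive divergence is in the hard step $(3)\Rightarrow(4)$.

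For $(3)\Rightarrow(4)$ you propose to plug the Racah single-sum formula into the family of vanishing conditions and grind through a case analysis on the resulting rational identities. The paper avoids this entirely by exploiting two structural facts about $6j$-symbols. First, it uses the degenerate-triangle property (if one of the four triples is a degenerate triangle then the $6j$-symbol is automatically nonzero) applied to $k=a-c$: this immediately forces $a-c\equiv 2m\pmod 4$, since otherwise $k=a-c$ would lie in the forbidden residue class yet give a nonzero symbol. If $a-c=2m$ the triangle conditions on $(a,b,m)$ and $(b,c,m)$ then pin down $b=c+m$. Second, to rule out $0<a-c<2m$ with $c\geq 1$, the paper invokes a short lemma (Lemma~\ref{lemma.non-zero}) proved via the Biedenharn--Elliott three-term recurrence: starting from the nonzero degenerate value at $k=a-c$ and the assumed zero at $k=a-c+2$, the recurrence propagates to produce a nonzero value at $k=a-c+2$ or $k=a-c+6$, both in the forbidden residue class, a contradiction.

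Your route is in principle viable, but the paper's approach is both shorter and more conceptual: rather than solving a system of alternating-sum identities, it uses one qualitative nonvanishing fact (degenerate triangles) to fix the parity, and one recurrence relation to control how zeros of the $6j$-symbol can cluster along the $k$-line. If you pursue the Racah-formula route you should expect the case analysis to be substantially messier than what the recurrence buys you.
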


The proof of this theorem requires the determination of
non-trivial zeros of the $6j$-symbol within certain parameters.
Finding non-trivial zeros of the $6j$-symbol, is in general, a
very difficult problem (see for instance \cite{L}, \cite{R} or
\cite{ZR}) and we think that the above theorem might have
applications to it.

\section{Matrix recognition of uniserial modules}\label{sec.Matrix recognition}

 Let $\g$ be a Lie algebra with solvable radical $\r$ and Levi
decomposition $\g=\s\ltimes \r$, and fix a representation $T:\g\to\gl(V)$. Given
a basis $B$ of $V$ we let $M_B:\g\to\gl(d)$, $d=\dim(V)$, stand for
the corresponding matrix representation.

By an $\s$-basis of $V$ of type 1 we understand a basis of the
form $B=B_1\cup\cdots\cup B_n$, where each $B_i$ is a basis of an
$\s$-submodule $W_i$ of $V$, and
\begin{equation}
\label{tipo} 0\subset W_1\subset W_1\oplus W_2\subset W_1\oplus
W_2\oplus W_3\subset \cdots\subset W_1\oplus\cdots \oplus
W_n=V
\end{equation}
is the socle series of $V$. We likewise define an $\s$-basis of
type 2 by requiring that (\ref{tipo}) be a composition series of
$V$. Since $V$ is completely reducible as an $\s$-module it is
clear that bases of both types exist. In either case $B$ gives
rise to a sequence $V_0,V_1,...,V_n$ of $\g$-modules defined by
$V_0=0$ and $V_i=W_1\oplus\cdots\oplus W_i$ for $1\leq i\leq n$.

\begin{lemma}
\label{b} The ideal $[\g,\r]$ annihilates
every irreducible $\g$-module.
\end{lemma}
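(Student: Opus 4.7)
The plan is to prove Lemma~\ref{b} by the classical combination of Lie's theorem with a trace argument, showing that the solvable radical $\r$ must act by a character on any irreducible $\g$-module, and that this character necessarily kills $[\g,\r]$.

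First, let $V$ be an irreducible $\g$-module. Applying Lie's theorem to the solvable Lie algebra $\r$ acting on $V$ produces a non-zero common eigenvector $v_0\in V$ with weight $\la\in\r^*$, so $x\cdot v_0=\la(x)v_0$ for all $x\in\r$.

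The main step is to show that $\la$ vanishes on $[\g,\r]$. I would fix an arbitrary $y\in\g$, set $v_k=y^k\cdot v_0$, and let $U=\mathrm{span}\{v_0,v_1,v_2,\dots\}\subset V$, which is finite dimensional. By induction on $k$, using that $\r$ is an ideal (so $[x,y]\in\r$ whenever $x\in\r$) and the identity
\[
x\cdot v_{k+1}=y\cdot(x\cdot v_k)+[x,y]\cdot v_k,
\]
one obtains, for every $x\in\r$,
\[
x\cdot v_k\equiv \la(x)\,v_k \pmod{\mathrm{span}\{v_0,\dots,v_{k-1}\}}.
\]
In particular $U$ is stable under $\r$ (and trivially under $y$), and the matrix of $x|_U$ in the ordered basis $v_0,v_1,\dots$ is upper triangular with diagonal entries all equal to $\la(x)$. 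Hence $\mathrm{tr}(x|_U)=(\dim U)\,\la(x)$. Applying this to $x=[y,z]$ with $z\in\r$ (so $x\in\r$), the restriction $x|_U$ equals the commutator $[y|_U,z|_U]$, which has trace zero. Therefore $\la([y,z])=0$, and since $y\in\g$ and $z\in\r$ were arbitrary, $\la$ vanishes on $[\g,\r]$.

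With this in hand, the weight space $V_\la=\{v\in V:x\cdot v=\la(x)v\text{ for all }x\in\r\}$ is a $\g$-submodule: for $v\in V_\la$, $y\in\g$, $x\in\r$,
\[
x\cdot(y\cdot v)=y\cdot(x\cdot v)+[x,y]\cdot v=\la(x)(y\cdot v)+\la([x,y])v=\la(x)(y\cdot v).
\]
Since $V$ is irreducible and $V_\la\neq 0$, we conclude $V=V_\la$, i.e., $\r$ acts on $V$ as the scalar character $\la$. Finally, every $x\in[\g,\r]$ then acts as $\la(x)\,\mathrm{id}=0$, which is the desired conclusion. The only non-routine ingredient is the trace argument in the second step; the rest amounts to bookkeeping.
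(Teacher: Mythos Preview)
Your proof is correct: this is the classical argument (Lie's theorem to produce a weight vector for $\r$, the trace trick on the cyclic $y$-span to show the weight kills $[\g,\r]$, and then the Invariance Lemma to conclude $V=V_\la$). One cosmetic point: the vectors $v_0,v_1,\dots$ need not all be linearly independent, so strictly speaking you should pass to the maximal initial segment $v_0,\dots,v_{n-1}$ that is a basis of $U$; the upper-triangular form and the trace computation go through unchanged.

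As for comparison with the paper: the paper does not actually prove this lemma but simply cites \cite{CS}, Lemma~2.4, for ``an elementary proof''. Your argument is precisely the standard elementary proof one finds in textbooks (e.g.\ Bourbaki or Humphreys), so there is no meaningful divergence to discuss.
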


\begin{proof} An elementary proof can be found, for instance, in Lemma 2.4
of \cite{CS}.
\end{proof}

\begin{cor}
\label{b1} If $B$ is
any $\s$-basis of $V$ then $M_B(s)$ is block diagonal and $M_B(r)$
is strictly block upper triangular for all $s\in\s$ and $r\in [\g,\r]$.
\end{cor}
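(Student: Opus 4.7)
The plan is to handle the two assertions about $M_B(s)$ and $M_B(r)$ separately, each reducing to a structural property of the filtration in (\ref{tipo}).

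First, for the block diagonal claim about $M_B(s)$: by the definition of an $\s$-basis (of either type), each $W_i$ is an $\s$-submodule of $V$, so every $s\in\s$ satisfies $s\cdot W_i\subseteq W_i$. Expressing this action in the basis $B=B_1\cup\cdots\cup B_n$ shows that all off-diagonal blocks of $M_B(s)$ vanish. Nothing more is needed for this half.

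Second, for the strictly block upper triangular claim about $M_B(r)$ with $r\in[\g,\r]$, the point is to show the stronger statement $r\cdot V_i\subseteq V_{i-1}$ for each $i$, where $V_i=W_1\oplus\cdots\oplus W_i$. In both cases the chain $V_0\subset V_1\subset\cdots\subset V_n$ is a chain of $\g$-submodules: in type 2 it is a composition series by hypothesis, and in type 1 it is the socle series, whose terms are $\g$-submodules by construction. Moreover each quotient $V_i/V_{i-1}$ is a semisimple $\g$-module: irreducible in type 2, and in type 1 it is the socle of $V/V_{i-1}$, hence a direct sum of irreducible $\g$-modules. Now Lemma \ref{b} tells us that $[\g,\r]$ annihilates every irreducible $\g$-module, and therefore also every semisimple one, so $r$ kills $V_i/V_{i-1}$. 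This yields $r\cdot V_i\subseteq V_{i-1}$, i.e.\ the block $(j,i)$ of $M_B(r)$ vanishes whenever $j\geq i$, which is the required strictly block upper triangular form.

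I do not anticipate a real obstacle: the corollary is essentially an immediate bookkeeping consequence of Lemma \ref{b} together with the observation that in both allowed types of $\s$-basis the $V_i$ form a $\g$-filtration whose successive quotients are semisimple as $\g$-modules, while the $W_i$ provide $\s$-stable complements that witness the block diagonal action of $\s$. The only point that is worth emphasizing in the write-up is the mild distinction between types 1 and 2, namely that the quotients are semisimple rather than irreducible in type 1, which nevertheless makes no difference once annihilation by $[\g,\r]$ is invoked.
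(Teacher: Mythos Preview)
Your argument is correct and is precisely the reasoning the paper leaves implicit: the corollary is stated without proof as an immediate consequence of Lemma \ref{b}, and your write-up supplies exactly the missing bookkeeping (namely that the $V_i$ are $\g$-submodules with semisimple successive quotients, which $[\g,\r]$ must annihilate). There is nothing to add or correct.
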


\begin{lemma}
\label{super} If $B$ is an $\s$-basis of type 1 then
none of the blocks in the first superdiagonal of $M_B(\r)$ is
identically 0.
\end{lemma}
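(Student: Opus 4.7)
The plan is to argue by contradiction: fix an index $i$ with $1\le i\le n-1$ and assume that the $(i,i+1)$-block of $M_B(r)$ is the zero matrix for every $r\in\r$. Since each $\soc^j(V)$ is a $\g$-submodule, it is stable under $\r$, so $M_B(r)$ is block upper triangular relative to the partition of $B$ induced by $B_1,\dots,B_n$, and its $(i,i+1)$-block records the $W_i$-component of $r\cdot W_{i+1}$ (computed in the splitting $V=W_1\oplus\cdots\oplus W_n$). Under the standing hypothesis this component vanishes, so
\[
r\cdot W_{i+1}\subseteq W_1\oplus\cdots\oplus W_{i-1}\oplus W_{i+1}=\soc^{i-1}(V)+W_{i+1}\qquad\text{for every }r\in\r.
\]

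Next, I would pass to the quotient $U=V/\soc^{i-1}(V)$, writing $\overline{X}$ for the image of $X\subseteq V$. By the very definition of the socle series, $\soc(U)=\soc^i(V)/\soc^{i-1}(V)=\overline{W_i}$, and $\soc^2(U)=\soc^{i+1}(V)/\soc^{i-1}(V)=\overline{W_i}\oplus\overline{W_{i+1}}$ as $\s$-modules. The displayed inclusion yields $\r\cdot\overline{W_{i+1}}\subseteq\overline{W_{i+1}}$ in $U$, and since $W_{i+1}$ is $\s$-invariant by definition of an $\s$-basis, $\overline{W_{i+1}}$ is actually a $\g$-submodule of $U$. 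As $\overline{W_i}=\soc(U)$ is tautologically a $\g$-submodule, we obtain a direct sum decomposition $\soc^2(U)=\overline{W_i}\oplus\overline{W_{i+1}}$ of $\g$-modules.

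Finally, I would extract the contradiction from the socle. Additivity of $\soc$ on $\g$-direct sums gives
\[
\soc\bigl(\soc^2(U)\bigr)=\soc(\overline{W_i})\oplus\soc(\overline{W_{i+1}})=\overline{W_i}\oplus\soc(\overline{W_{i+1}}),
\]
where the last equality uses that $\overline{W_i}$, being a socle, is already semisimple. On the other hand $\soc(\soc^2(U))=\soc(U)=\overline{W_i}$, forcing $\soc(\overline{W_{i+1}})=0$. Since $\overline{W_{i+1}}\cong W_{i+1}$ is a nonzero finite-dimensional $\g$-module it has nonzero socle, and this is the desired contradiction. The only delicate step is the passage from an $\s$-module decomposition to a $\g$-module decomposition of $\soc^2(U)$; this is the point where the vanishing of the $(i,i+1)$-block for \emph{all} $r\in\r$, and not just for $r\in[\g,\r]$, is essential.
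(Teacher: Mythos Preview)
Your proof is correct and follows essentially the same approach as the paper: both argue by contradiction, use the vanishing superdiagonal block to show that $W_{i+1}+\soc^{i-1}(V)$ is a $\g$-submodule, and then derive a contradiction in the quotient $V/\soc^{i-1}(V)$ from the defining property of the socle. The paper phrases the final contradiction more directly (the image of $W_{i+1}$ is a nonzero $\g$-submodule intersecting the socle $\overline{W_i}$ trivially), whereas you route it through additivity of $\soc$ on a $\g$-direct sum, but this is only a cosmetic difference.
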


\begin{proof} Let $B$ and $W_1,...,W_n$, $V_0,V_1,...,V_n$ be as in (\ref{tipo}).
Let $2\leq i\leq n$ and suppose, if possible, that the block
$(i-1,i)$ of $M_B(\r)$ is identically 0. It follows easily that
$W_i\oplus V_{i-2}$ is a submodule of $V$. Clearly $(W_i\oplus
V_{i-2})/V_{i-2}$ and $V_{i-1}/V_{i-2}$ are non-zero submodules of
$V/V_{i-2}$ having trivial intersection. This contradicts the fact
that the socle of $V/V_{i-2}$ is $V_{i-1}/V_{i-2}$.
\end{proof}

\begin{theorem}
\label{fed} The $\g$-module $V$ is uniserial if and only if given
any $\s$-basis $B$ of type~2 none of the blocks in the first
superdiagonal of $M_B(\r)$ is identically 0. If $\g$ is perfect and there exists one $\s$-basis
$B$ of type 2 such that none of the blocks in the first
superdiagonal of $M_B(\r)$ is identically 0 then $V$ is
uniserial.
\end{theorem}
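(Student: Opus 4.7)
The plan is to prove three implications. First, suppose $V$ is uniserial, so its submodules form a chain, and every type-2 $\s$-basis $B$ reflects this unique composition series $V_0\subset V_1\subset\cdots\subset V_n$. Mimicking Lemma~\ref{super}, a vanishing block $(i-1,i)$ of $M_B(\r)$ would make $V_{i-2}\oplus W_i$ an $\r$-stable, hence $\g$-stable, subspace strictly between $V_{i-2}$ and $V_i$; but it differs from $V_{i-1}$ because in the $\s$-decomposition it carries no $W_{i-1}$-component, contradicting uniseriality.

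Second, I argue the converse by contrapositive: if $V$ is not uniserial I produce a type-2 basis with a zero block. Among incomparable pairs of $\g$-submodules $A,B$ choose one minimizing $\ell(A+B)-\ell(A\cap B)$; minimality forces $A/(A\cap B)$ and $B/(A\cap B)$ to be irreducible. Pick a composition series passing through $V_{i-1}=A\cap B$, $V_i=A$, $V_{i+1}=A+B$, and let $W_{i+1}$ be an $\s$-complement of $V_{i-1}$ in $B$; one checks that $W_{i+1}$ is simultaneously an $\s$-complement of $V_i$ in $V_{i+1}$ and lies inside $B$. Since $B=V_{i-1}\oplus W_{i+1}=W_1\oplus\cdots\oplus W_{i-1}\oplus W_{i+1}$ carries no $W_i$-component, completing to a full type-2 basis yields $\r W_{i+1}\subseteq\r B\subseteq B$, so the block $(i,i+1)$ of $M_B(\r)$ is identically zero.

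Third, assume $\g$ is perfect and let $B$ be a type-2 basis with no vanishing first-superdiagonal block. Perfectness gives $\r=[\g,\r]$, so Corollary~\ref{b1} makes $M_B(r)$ strictly block upper triangular for all $r\in\r$. A short computation shows that replacing $W_i$ by any other $\s$-complement $W_i'=\{w+\phi(w):w\in W_i\}$ of $V_{i-1}$ in $V_i$ leaves the block $(i-1,i)$ unchanged under the identification $w\leftrightarrow w+\phi(w)$, because $r\phi(w)\in\r V_{i-1}\subseteq V_{i-2}$ contributes nothing to the $W_{i-1}$-component. Consequently the nonvanishing hypothesis is intrinsic to the composition series underlying $B$, equivalently every $V_i/V_{i-2}$ is indecomposable. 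I then conclude by induction on the composition length $n$: the truncated and quotient series endow $V_{n-1}$ and $V/V_1$ with the same property, so both are uniserial by induction; for an arbitrary submodule $U\subseteq V$ the cases $U\subseteq V_{n-1}$ and $V_1\subseteq U$ are handled by these inductive conclusions, while the remaining case $V_1\cap U=0$ with $U+V_{n-1}=V$ forces $V=V_1\oplus U$ and hence a nontrivial decomposition of $V_2$, contradicting the indecomposability of $V_2$. I expect the main obstacle to be this third step: the perfectness assumption is essential both for the insensitivity of the first-superdiagonal blocks to the choice of $\s$-complement and for reducing uniseriality of $V$ to the existence of a single good composition series.
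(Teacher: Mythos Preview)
Your proof is correct, and all three parts are sound. In particular, in the third part your claim $V=V_1\oplus U$ does hold: $V_1+U$ is a submodule containing $V_1$, hence by uniseriality of $V/V_1$ it is some $V_j$, and $U\not\subseteq V_{n-1}$ forces $j=n$; then the modular law gives $V_2=V_1\oplus(V_2\cap U)$, contradicting indecomposability of $V_2/V_0$.

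However, your route differs from the paper's in parts two and three. For the contrapositive, the paper simply observes that if $V$ is not uniserial then some socle factor $\mathrm{soc}^{i}(V)/\mathrm{soc}^{i-1}(V)$ is reducible, and since this factor is completely reducible one can split it as a direct sum of $\g$-modules, immediately producing a type-2 basis with a zero superdiagonal block. Your minimization over incomparable pairs is more hands-on but avoids invoking the socle series. For the perfect case, the paper argues directly that the given composition series \emph{is} the socle series: it shows $\mathrm{soc}(V)=W_1$ by taking a nonzero submodule $U$, choosing the least $j$ with $U\cap V_j\neq 0$, and using that the $0$-weight space of $\r$ on $V_j/V_{j-2}$ is $\s$-stable to force $j=1$. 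Your approach instead reformulates the hypothesis intrinsically as ``each $V_i/V_{i-2}$ is indecomposable'' (using perfectness to see that the superdiagonal block is insensitive to the choice of $\s$-complement, via $\r V_{i-1}\subseteq V_{i-2}$), and then runs an induction on length using $V_{n-1}$ and $V/V_1$. The paper's argument is shorter and highlights the role of the socle; yours is more structural and makes explicit the key length-$2$ obstruction.
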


\begin{proof} If $V$ is uniserial an $\s$-basis of type 2 is also of type 1,
so Lemma \ref{super} applies. If $V$ is not uniserial then some
factor of its socle series is not irreducible. This factor is a
completely reducible $\g$-module, which easily yields an
$\s$-basis $B$ of type~2 with at least one block in the first
superdiagonal of $M_B(\r)$ identically 0.

Suppose next $\g$ is perfect and let $B$ be an $\s$-basis of type 2 such that none of the blocks in the first
superdiagonal of $M_B(\r)$ are identically 0. As indicated above,
$B$ gives rise to a series of $\s$-modules $W_1,...,W_n$ and
$\g$-modules $V_0,V_1,...,V_n$ in such a way that (\ref{tipo})
composition series of $V$. We will show that (\ref{tipo}) is in fact the socle series of $V$.

Arguing by induction, it suffices to show that
$\mathrm{soc}(V)=W_1$. Let $U$ be a non-zero submodule of~$V$. We
wish to show that $W_1\subseteq U$. Since $U\cap V\neq 0$ there
exists a smallest index $1\leq j\leq n$ such that $U\cap V_j\neq
0$. If $j=1$ we are done by the irreducibility of $W_1$. Suppose,
if possible, that $1<j\leq n$. The definition of $j$ ensures the
existence of $u\in U$ such that $u=w_1+\cdots+w_j$, where $w_i\in
W_i$ for $1\leq i\leq j$ and $w_j\neq 0$. Let $r\in\r$. Then $r\in
[\g,\r]$, since $\g$ is perfect, so $ru=rw_2+\cdots+rw_j$, where
$rw_i\in V_{i-1}$ for all $2\leq i\leq j$ by Lemma \ref{b}. In
particular $\r u\in U\cap V_{j-1}$. The choice of $j$ forces
forces $\r u=0$, so $\r w_j\subseteq V_{j-2}$. Let $T$ be 0-weight
space for the action of $\r$ on $V_j/V_{j-2}$. As $\r$ is an ideal
of $\g$, the subspace $T$ is $\s$-invariant. Since $w_j+V_{j-2}\in
T$ the $\s$-submodule of $V_j/V_{j-2}$ generated by $w_j+V_{j-2}$
is contained in~$T$, i.e. $(W_j+ V_{j-2})/V_{j-2}\subseteq T$,
which means $\r W_j\subseteq V_{j-2}$, a contradiction.
\end{proof}

\begin{note}
\label{op1}
{\rm Let $\g$ be any imperfect Lie algebra. Then there is a non-uniserial
$\g$-module with an $\s$-basis
$B$ of type 2 such that none of the blocks in the first
superdiagonal of $M_B(\r)$ is identically 0. It suffices to find a counterexample
when $\g=\C x$ is one dimensional,
in which case we can take $x\mapsto \left(%
\begin{array}{cc}
  1 & 1 \\
  0 & 0 \\
\end{array}%
\right)$.}
\end{note}

\begin{lemma}
\label{reves} If $U$ is a subspace of $V$ let $U^0=\{f\in V^*\,|\, f(U)=0\}$.
Then $U\mapsto U^0$ is an inclusion reversing bijective correspondence from the $\g$-submodules
of $V$ to those of $V^*$. Moreover, if $U\subseteq W$ are $\g$-submodules of $V$ then
$U^0/W^0\cong (W/U)^*$ via $f+W^0\mapsto \widetilde{f}$, where $\widetilde{f}(w+U)=f(w)$. In particular,
if $0=V_0\subset V_1\subset\cdots \subset V_n=V$ is a (resp. the only) composition series of $V$, with composition factors $X_1,\dots,X_n$,
then $0=V_n^0\subset\cdots\subset V_1^0\subset V_0^0=V^*$ is a (resp. the only) composition series of $V^*$, with composition factors $X_n^*,\dots,X_1^*$.
\end{lemma}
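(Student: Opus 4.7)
My plan is to handle the three clauses of the lemma in the order stated, reducing everything to the standard annihilator duality and a routine check of $\g$\nobreakdash-equivariance.

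First I would verify that $U\mapsto U^0$, which is already an inclusion-reversing bijection between subspaces of $V$ and of $V^*$ (with inverse the double annihilator $W\mapsto {}^0W$, using the canonical identification $V^{**}\cong V$), restricts to a bijection at the level of $\g$-submodules. Recall that the $\g$-action on $V^*$ is $(x\cdot f)(v)=-f(x\cdot v)$. If $U$ is a $\g$-submodule of $V$ and $f\in U^0$, then for $x\in\g$ and $u\in U$ one has $x\cdot u\in U$, hence $(x\cdot f)(u)=-f(x\cdot u)=0$, so $x\cdot f\in U^0$. Thus $U^0$ is a $\g$-submodule of $V^*$. The reverse implication follows by applying the same argument to $V^*$ together with $(U^0)^0=U$.

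Next I would construct the isomorphism $U^0/W^0\cong(W/U)^*$ explicitly. Define $\phi:U^0\to(W/U)^*$ by $\phi(f)=\widetilde f$, where $\widetilde f(w+U)=f(w)$; well-definedness is immediate from $f|_U=0$. A direct computation gives $\phi(x\cdot f)=x\cdot\phi(f)$, so $\phi$ is $\g$-linear. Its kernel is $\{f\in U^0:f|_W=0\}=W^0$. For surjectivity, pick any vector-space complement $V=W\oplus C$: an arbitrary $\alpha\in(W/U)^*$ lifts to $\alpha'\in W^*$ with $\alpha'|_U=0$, and extending $\alpha'$ by zero on $C$ gives $f\in U^0$ with $\phi(f)=\alpha$. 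Passing to the quotient yields the desired $\g$-isomorphism $U^0/W^0\cong(W/U)^*$.

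Finally, applying the first part to the chain $0=V_0\subset V_1\subset\cdots\subset V_n=V$ produces the reversed chain $0=V_n^0\subset V_{n-1}^0\subset\cdots\subset V_0^0=V^*$ of $\g$\nobreakdash-submodules, and applying the second part to each pair $V_{i-1}\subseteq V_i$ identifies the successive factor $V_{i-1}^0/V_i^0$ with $X_i^*$. Reading the factors of the dual chain from bottom to top therefore yields $X_n^*,X_{n-1}^*,\dots,X_1^*$, as claimed. Because the annihilator map is a bijection on all $\g$-submodules, it induces a bijection on chains of $\g$-submodules, so $V$ has a unique composition series if and only if $V^*$ does; this gives the parenthetical "resp. the only" assertion.

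I do not anticipate any real obstacle: the content is standard linear-algebra duality combined with an easy check that the formulas respect the $\g$-action. The only points that require attention are keeping track of the reversal of indices when passing to the dual chain and verifying the $\g$-equivariance of $\phi$.
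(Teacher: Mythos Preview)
Your proof is correct and follows exactly the approach the paper intends: the paper's own proof consists of the single sentence ``Use the natural isomorphism of $\g$-modules $V\to V^{**}$,'' and you have simply unpacked that hint by verifying the annihilator bijection, the $\g$-equivariance of $\phi$, and the consequences for composition series.
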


\begin{proof} Use the natural isomorphism of $\g$-modules $V\to V^{**}$.
\end{proof}

\begin{note} Lemma \ref{b} through Theorem \ref{fed} are
valid, mutatis mutandis, for an arbitrary finite dimensional
complex associative algebra $\mathfrak{A}$. Both $\r$ and
$[\g,\r]$ are to be replaced by the Jacobson radical
$\mathfrak{J}$ of $\mathfrak{A}$, and $\s$ by a semisimple
subalgebra of $\mathfrak{A}$ complementing $\mathfrak{J}$, whose existence is
ensured by the Wedderburn-Malcev theorem.

Lemma \ref{reves} is also valid if $\mathfrak{A}$ has an involution $a\mapsto a^*$
and we make $V^*$ into an $\mathfrak{A}$-module via $(af)(v)=f(a^* v)$.

\end{note}

\section{Admissible sequences}
\label{secext}

In this section $\g=\s\ltimes \r$, where $\s$ and $\r$ are arbitrary Lie
algebras.

Let $W_1,...,W_n$ be $\s$-modules and set $V=W_1\oplus\cdots\oplus
W_n$. Let $T_i:\s\to\gl(W_i)$ and $T:\s\to\gl(V)$ stand for the
associated representations. As is well known, $\gl(V)$ becomes an
$\s$-module via $s\cdot f=[T(s),f]$, for $s\in\s$ and
$f\in\gl(V)$.

Suppose first that $X:\g\to\gl(V)$ is a representation extending $T$ and denote by $Y:\r\to\gl(V)$ the
restriction of $X$ to $\r$. Then
$$
Y(s\cdot r)=Y([s,r])=X([s,r])=[X(s),X(r)]=[T(s),Y(r)]=s\cdot Y(r),
$$
i.e. $Y$ is a homomorphism of $\s$-modules.

Suppose conversely that $Y:\r\to\gl(V)$ is a homomorphism of $\s$-modules and let $X:\g\to\gl(V)$ be the linear
extension of $T$ and $Y$ to $\g$. Then
$$
X([s,r])=Y([s,r])=Y(s\cdot r)=s\cdot Y(r)=[T(s),Y(r)]=[X(s),X(r)],
$$
i.e. $X$ preserves all brackets $[s,r]$. As $T$ is a Lie
homomorphism, $X$ also preserves all brackets $[s_1,s_2]$. But
$Y$, and hence $X$, need not preserve the brackets $[r_1,r_2]$.

In any case, we will identify $\gl(V)$ with $\underset{1\leq i,j\leq n}\bigoplus \Hom(W_i,W_j)$ as $\s$-modules by interpreting
each linear map $W_i\to W_j$ as a linear map $V\to V$ that is 0 on all summands $W_k$ of $V$ with $k\neq i$. Suppose we
are given $n(n-1)/2$ $\s$-homomorphisms $f_{i,j}:\r\to \Hom(W_i,W_j)$, where $i>j$, and let $Y:\r\to\gl(V)$ be the $\s$-homomorphism
corresponding to them. At this point we make the simplifying assumption that $\r$ be abelian. Then $Y([r,t])=0$ for all $r,t\in\r$.
On the other hand, in order to have $[Y(r),Y(t)]=0$ it is necessary that the following maps vanish:
\begin{equation}
\label{nma}
f_{i+1,i}(r)f_{i+2,i+1}(t)-f_{i+1,i}(t)f_{i+2,i+1}(r)\in \Hom(W_{i+2},W_i),\quad 1\leq i\leq n-2.
\end{equation}
Equivalently, the Lie subalgebra of $\gl(V)$ generated by all elements of $\gl(V)$ of the form
$f_{2,1}(r)+\cdots+ f_{n,n-1}(r)$, $r\in\r$, must be abelian.
Obviously this condition is also sufficient if restrict our list of starting maps to $f_{2,1},\dots, f_{n,n-1}$.

Now each of the maps $\r\times \r\to \Hom(W_{i+2},W_i)$ defined by (\ref{nma}) is alternating, thereby
giving rise to a linear map $\Lambda^2\r\to \Hom(W_{i+2},W_i)$. A
simple calculation shows that this is a homomorphism of
$\s$-modules. Thus a sufficient condition for the vanishing of (\ref{nma}) is that  $\Lambda^2\r$ be disjoint with the
$\s$-modules $W_3^*\otimes W_1$,...,$W_n^*\otimes W_{n-2}$.  The
necessity of this condition is examined in \S\ref{sec.length3}.
Taking into account the preceding discussion and Theorem \ref{fed}
the following criterion is established.

\begin{prop}
\label{xrite}

Let $\g=\s\ltimes\r$ be a Levi decomposition and let $W_1,...,W_n$
be a sequence of irreducible $\s$-modules.

(a) Suppose $W_1,...,W_n$ is admissible (as defined in \S\ref{intro}). Then $\r$
is not disjoint with any of the $\s$-modules $W_2^*\otimes
W_1$,...,$W_n^*\otimes W_{n-1}$. In particular, if $\r$ is
irreducible, it must be a constituent of $W_2^*\otimes
W_1$,...,$W_n^*\otimes W_{n-1}$.

(b) Assume $\g$ is perfect and $\r$ is abelian. Then $W_1,...,W_n$ is admissible
if and only if $\r$ is not disjoint with
any of the $\s$-modules $W_2^*\otimes W_1$,...,$W_n^*\otimes
W_{n-1}$ and for some choice of non-zero $\s$-homomorphisms
$f_{i+1,i}:\r\to \Hom(W_{i+1},W_i)$ the Lie subalgebra of $\gl(V)$, $V=W_1\oplus\cdots\oplus
W_n$, generated by $f_{2,1}(r)+\cdots + f_{n,n-1}(r)$, $r\in\r$, is abelian. In particular, if
$\Lambda^2\r$ is disjoint with
the $\s$-modules $W_3^*\otimes W_1$,...,$W_n^*\otimes
W_{n-2}$ and $\r$ is not disjoint  with
any of the $\s$-modules $W_2^*\otimes W_1$,...,$W_n^*\otimes
W_{n-1}$ then $W_1,...,W_n$ is admissible.
\end{prop}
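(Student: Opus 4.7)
My plan is to lean heavily on the matrix recognition criteria of Section \ref{sec.Matrix recognition} together with the preparatory Lie-bracket analysis already carried out in the paragraphs preceding the statement. The whole proposition is essentially an assembly of facts that have been established or nearly established; the main task is to match them up with the formal dichotomy "disjoint vs.\ not disjoint" and "abelian subalgebra vs.\ not".

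For part (a), I would start from a uniserial $\g$-module $V$ realizing the sequence $W_1,\dots,W_n$. Fix an $\s$-basis $B$ of $V$ of type~1 (existence comes from complete reducibility of $V$ as an $\s$-module); for a uniserial module this is automatically also of type~2, so the matrix $M_B(r)$ is strictly block upper triangular for every $r\in\r$ by Corollary \ref{b1}. Reading off the $(i,i+1)$-block of $M_B$ gives, for each $1\le i\le n-1$, an $\s$-homomorphism $f_{i+1,i}\colon\r\to\Hom(W_{i+1},W_i)$; by Lemma \ref{super} none of these is identically zero. Since $\s$ is semisimple, both $\r$ and $\Hom(W_{i+1},W_i)\cong W_{i+1}^*\otimes W_i$ are completely reducible $\s$-modules, so a nonzero $\s$-homomorphism forces them to share an irreducible constituent, i.e.\ not to be disjoint. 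The "in particular" clause is then immediate: if $\r$ is irreducible as an $\s$-module, "not disjoint" upgrades to "is a constituent of".

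For the nontrivial direction of (b) ($\Leftarrow$), assume we have nonzero $\s$-homomorphisms $f_{i+1,i}\colon\r\to\Hom(W_{i+1},W_i)$ with the generated Lie subalgebra abelian. Set $f_{j,i}=0$ whenever $j-i\ge 2$. The recipe in the paragraph above the proposition then assembles these into a single $\s$-homomorphism $Y\colon\r\to\gl(V)$, namely $Y(r)=\sum_{i=1}^{n-1} f_{i+1,i}(r)$. Because $\r$ is abelian, the only Lie-bracket condition needed for $Y$ to extend to a representation $X\colon\g\to\gl(V)$ is $[Y(r),Y(t)]=0$ for all $r,t\in\r$, and by construction this bracket vanishes. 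The matrix $M_B(r)=Y(r)$ is strictly block upper triangular with nonzero blocks on the first superdiagonal (since $f_{i+1,i}\ne0$), so the perfect case of Theorem \ref{fed} applies and $V$ is uniserial with socle factors $W_1,\dots,W_n$, as required. The direction ($\Rightarrow$) goes the same way as (a): choose an $\s$-basis of type 1, take the $f_{i+1,i}$ to be the first-superdiagonal blocks of $M_B$, note they are nonzero by Lemma \ref{super}, and observe that the second-superdiagonal part of $0=[Y(r),Y(t)]$ is precisely the expression in (\ref{nma}), which is exactly the statement that $\sum f_{i+1,i}(r)$ generate an abelian Lie subalgebra of $\gl(V)$. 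The final "in particular" clause is a cheap sufficient condition: if $\Lambda^2\r$ is disjoint from each $W_{i+2}^*\otimes W_i$, then the $\s$-homomorphism $\Lambda^2\r\to\Hom(W_{i+2},W_i)$ induced by (\ref{nma}) must be zero, forcing the Lie subalgebra to be abelian regardless of how the $f_{i+1,i}$ are chosen.

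I do not expect any serious obstacle here. The only place that requires a moment of care is the identification of (\ref{nma}) with the bracket of first-superdiagonal matrices and with an $\s$-homomorphism out of $\Lambda^2\r$; but both assertions have already been essentially spelled out in the text preceding the proposition, so the proof reduces to book-keeping and invoking Lemma \ref{super}, Theorem \ref{fed}, and complete reducibility of $\s$-modules.
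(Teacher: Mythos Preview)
Your argument is correct and mirrors the paper's own, which is simply the discussion preceding the proposition together with Lemma \ref{super} and Theorem \ref{fed}. One small slip: in part~(a) Corollary \ref{b1} only yields strict block upper triangularity for $r\in[\g,\r]$, and $\g$ is not assumed perfect there; this is harmless, however, since all you actually use is that the $(i,i+1)$-block of $M_B|_\r$ is a nonzero $\s$-homomorphism, which follows from Lemma \ref{super} and the $\s$-equivariance of $Y=X|_\r$ established at the start of \S\ref{secext}.
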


In regards to uniqueness, we have the following criterion.

\begin{prop}
\label{d} Suppose $\g$ is perfect, with Levi decomposition $\g=\s\ltimes\r$. Let $W_1,...,W_n$ be irreducible
$\s$-modules satisfying:

(a) $\dim(\Hom_\s(\r,W_{i+1}^*\otimes W_i))=1$ if $1\leq i<n$.

(b) $\dim(\Hom_\s(\r,W_{j}^*\otimes W_i))=0$ if $j-i\geq 2$.

Then there exists at most one $\g$-module $V$, up to isomorphism,
with socle factors $W_1,...,W_n$.
\end{prop}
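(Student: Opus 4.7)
The plan is to show that, after fixing an $\s$-module identification $V\cong W_1\oplus\cdots\oplus W_n$ coming from the socle series, the $\g$-module structure on $V$ is encoded by a tuple of non-zero scalars $(\lambda_1,\dots,\lambda_{n-1})\in\C^{n-1}$, and that any two such modules with different tuples are isomorphic via a diagonal $\s$-automorphism.

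First I would fix an $\s$-basis of type 1 for $V$, giving an $\s$-module identification $V=W_1\oplus\cdots\oplus W_n$. Since $\g$ is perfect, Corollary \ref{b1} guarantees that the $\r$-action is given by an $\s$-module homomorphism $Y:\r\to\gl(V)$ whose image lies in the strictly block upper triangular endomorphisms. Writing $Y=\sum_{i<j}Y_{i,j}$ with $Y_{i,j}:\r\to\Hom(W_j,W_i)$ an $\s$-homomorphism, hypothesis (b) forces $Y_{i,j}=0$ for $j-i\geq 2$, leaving only the superdiagonal blocks $Y_{i,i+1}$. By hypothesis (a), the space $\Hom_\s(\r,\Hom(W_{i+1},W_i))$ is one-dimensional, so fixing a non-zero generator $F_i$ we can write $Y_{i,i+1}=\lambda_i F_i$ for a unique scalar $\lambda_i\in\C$.

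Next I would verify that each $\lambda_i$ must be non-zero. If $\lambda_i=0$ for some $i$, then every block of $Y$ mapping into or out of $W_{i+1}$ vanishes (the superdiagonal block by assumption, the others by (b)), so the $\s$-summand $W_{i+1}\subseteq V$ is also $\r$-invariant, hence a $\g$-submodule isomorphic to the irreducible $\s$-module $W_{i+1}$. But then the socle of $V$ would contain $W_1\oplus W_{i+1}$, contradicting the hypothesis that $\mathrm{soc}(V)\cong W_1$ is irreducible.

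Finally, given two such modules $V,V'$ with associated tuples $(\lambda_i)$ and $(\lambda'_i)$ of non-zero scalars, I would define $\varphi:V\to V'$ by $\varphi|_{W_i}=c_i\,\mathrm{id}_{W_i}$. This is automatically $\s$-linear, and a direct computation shows that it is $\r$-linear if and only if $c_i\lambda_i=c_{i+1}\lambda'_i$ for every $1\leq i<n$; setting $c_1=1$ and $c_{i+1}=c_i\lambda_i/\lambda'_i$ produces non-zero scalars satisfying the required relations, and yields a $\g$-isomorphism $V\cong V'$. The only mildly delicate point is the middle step (ruling out $\lambda_i=0$), where one must combine hypothesis (b) with the definition of the socle series; once the parametrization by $(\lambda_1,\dots,\lambda_{n-1})$ is established, the remainder is an elementary rescaling argument.
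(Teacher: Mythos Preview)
Your proof is correct and follows the same approach as the paper's: encode the $\r$-action in block strictly upper-triangular form via Corollary~\ref{b1}, use (b) to kill all but the first superdiagonal blocks, use (a) to identify each remaining block up to a nonzero scalar (the paper obtains non-vanishing by citing Lemma~\ref{super}, while you argue directly from (b) and the socle hypothesis), and then rescale by a block-scalar automorphism. One minor slip: in your middle step you only need the blocks with \emph{domain} $W_{i+1}$ to vanish, whereas the block $Y_{i+1,i+2}$ mapping \emph{into} $W_{i+1}$ need not be zero --- but that is irrelevant to the $\r$-invariance of $W_{i+1}$, so the argument stands.
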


\begin{proof} Let $V$ be one such a module. Since its socle
factors are irreducible, $V$ is uniserial. Let $B$ be an
$\s$-basis of $V$. Since $\g$ is perfect, Corollary \ref{b1}
ensures that $M_B(\r)$ is strictly block upper triangular. By
Lemma \ref{super} none of the blocks in the first superdiagonal of
$M_B(\r)$ are identically 0, while (b) guarantees that all other
strictly upper triangular blocks of $M_B(\r)$ are identically 0.
By (a) the blocks in the first superdiagonal of $M_B(\r)$ are
uniquely determined up to a non-zero scalar (which depends only on
the position of the block). Conjugating all $M_B(x)$, $x\in\g$, by
a suitable block diagonal matrix, with each block a scalar matrix,
we can arbitrarily scale all blocks in the first superdiagonal.
This yields the desired result.
\end{proof}

\section{Existence and uniqueness of the uniserial module $Z(\la,b)$}
\label{hist}

 The notation introduced here will be kept for the
remainder of the paper. Let $\s$ be a non-zero semisimple Lie algebra with Cartan
subalgebra~$\h$, associated root system $\Phi$, and fixed system of simple
roots $\Pi$. The coroots $h_\al\in\h$ associated to the simple
roots $\al\in\Pi$ form a basis of $\h$. The basis of $\h^*$ dual
to $\{h_\al\,|\,\al\in\Pi\}$
 consists of the fundamental weights $\{\la_\al\,|\,\al\in\Pi\}$.
Let $\Lambda^+$ stand for the dominant integral weights of $\h$
associated to $\Pi$, i.e. the non-negative integral linear
combinations of the fundamental weights $\la_\al$. Given
$\la,\mu\in\h^*$ we declare $\la\leq\mu$ if $\mu-\la$ is a
non-negative rational linear combination of simple roots. It is
well-known that the inverse of the Cartan matrix has non-negative
rational coefficients. It follows that all fundamental weights are
strictly positive. Therefore, all non-zero dominant integral
weights are strictly positive. This fact will be repeatedly and
implicitly used below.

Let $W$ stand for the Weyl group of $\Phi$ and write $w_0$ for the longest element of~$W$, i.e. the one sending $\Pi$ to $-\Pi$.

We fix $\mu\in\Lambda^+$ and let $V(\mu)$ stand for an irreducible
$\s$-module with highest weight $\mu$. Define the dual weight
$\mu^*=-w_0\mu\in\Lambda^+$, noting that $V(\mu)^*\cong V(\mu^*)$.
We assume henceforth that that $\mu\neq 0$ and consider the
perfect Lie algebra $\g=\s\ltimes \r$, where $\r=V(\mu)$. By the
special case we mean the case $\g=\sl(2)\ltimes V(m)$, $m\geq 1$.

\begin{theorem}\label{thm.general Z(l,b)}
\label{constr} Let $\la\in\Lambda^+$ and $b\geq 0$. Then, up to
isomorphism, there exists one and only one uniserial $\g$-module,
say $Z(\la,b)$, that has length $b+1$ and socle factors
$V(\la),V(\la+\mu^*),...,V(\la+b\mu^*)$.
\end{theorem}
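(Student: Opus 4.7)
The plan is to split the proof into uniqueness, which reduces at once to Proposition \ref{d}, and existence, which I handle by an explicit construction of $Z(\la,b)$ inside the $\g$-module $M=V(\la)\otimes S(V(\mu^*))$. On $M$, $\s$ acts diagonally, and each $r\in\r=V(\mu)$ acts as $1\otimes\partial_r$, where $\partial_r$ is the unique derivation of $S(V(\mu^*))$ sending $\xi\in V(\mu^*)$ to the scalar $\langle\xi,r\rangle$ from the canonical pairing $V(\mu)\times V(\mu^*)\to\C$. This prescription genuinely defines a $\g$-action: the assignment $r\mapsto\partial_r$ is $\s$-equivariant, so the cross relations $[s,r]=s\cdot r$ are preserved, and $\r$ is abelian, so the $\partial_r$'s commute.

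For uniqueness I would take $W_i=V(\la+(i-1)\mu^*)$ and verify hypotheses (a) and (b) of Proposition \ref{d}. A standard multiplicity computation (Frobenius reciprocity plus $V(\mu)^*\cong V(\mu^*)$) shows that for $j>i$, the dimension of $\Hom_\s(V(\mu),W_j^*\otimes W_i)$ equals the multiplicity of $V(\la+(j-1)\mu^*)$ in $V(\mu^*)\otimes V(\la+(i-1)\mu^*)$. The highest weight of this tensor product is $\mu^*+\la+(i-1)\mu^*=\la+i\mu^*$, attained only by tensoring the two highest weight lines, hence with one-dimensional weight space. Taking $j=i+1$ yields hypothesis (a): the multiplicity of $V(\la+i\mu^*)$ is exactly $1$. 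Taking $j\ge i+2$ yields hypothesis (b): the candidate highest weight $\la+(j-1)\mu^*$ exceeds $\la+i\mu^*$ by the nonzero dominant weight $(j-1-i)\mu^*$, which is strictly positive in the root order (as every nonzero dominant weight is, as recalled at the beginning of this section), so $V(\la+(j-1)\mu^*)$ cannot appear as a constituent.

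For existence I set $Z(\la,b)=U(\g)\cdot(v_\la\otimes v_{\mu^*}^b)\subset M$, where $v_\la$ and $v_{\mu^*}$ are highest weight vectors of $V(\la)$ and $V(\mu^*)$. The computation $\partial_r(v_{\mu^*}^k)=k\langle v_{\mu^*},r\rangle\,v_{\mu^*}^{k-1}$ implies $U(\r)\cdot(v_\la\otimes v_{\mu^*}^b)=\mathrm{span}\{v_\la\otimes v_{\mu^*}^j:0\le j\le b\}$, and since each $v_\la\otimes v_{\mu^*}^j$ is an $\s$-highest weight vector of weight $\la+j\mu^*$, the PBW decomposition $U(\g)=U(\s)U(\r)$ gives $Z(\la,b)\cong\bigoplus_{j=0}^{b}V(\la+j\mu^*)$ as $\s$-modules. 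By $\s$-equivariance, the action of $\r$ carries the summand $V(\la+j\mu^*)$ into $V(\la+(j-1)\mu^*)$, and does so non-trivially because the pairing $V(\mu)\times V(\mu^*)\to\C$ is non-degenerate. In an $\s$-basis of $Z(\la,b)$ of type $2$ adapted to this filtration, the matrix of $\r$ is strictly block upper triangular with all first-superdiagonal blocks non-zero; since $\g$ is perfect, Theorem \ref{fed} concludes that $Z(\la,b)$ is uniserial with the prescribed socle factors.

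The hard part is the \emph{exact} multiplicity one in hypothesis (a): multiplicity at least one is formal (e.g., by the PRV theorem), but equality -- which is what truly pins down $Z(\la,b)$ up to isomorphism -- relies on the fact that $\la+i\mu^*$ is precisely the sum of the two individual highest weights, forcing the ambient weight space to be one-dimensional.
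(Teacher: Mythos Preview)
Your proof is correct, and for uniqueness you follow exactly the paper's route via Proposition~\ref{d}. For existence, however, you take a genuinely different path. The paper invokes the abstract criterion Proposition~\ref{xrite}(b): it verifies that $V(\mu)$ meets each $W_{i+1}^*\otimes W_i$ and that $\Lambda^2 V(\mu)$ is disjoint from each $W_{i+2}^*\otimes W_i$, both by the same highest-weight comparison you use for uniqueness. You instead build $Z(\la,b)$ concretely as the cyclic $\g$-submodule of $V(\la)\otimes S(V(\mu^*))$ generated by $v_\la\otimes v_{\mu^*}^b$, with $\r$ acting by contraction. This is essentially the construction the paper carries out in \S\ref{sec5}, but only for $\la=0$; tensoring with $V(\la)$ to handle the general case is a clean extension the paper does not record. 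Your approach buys an explicit model of $Z(\la,b)$ and sidesteps the $\Lambda^2\r$ disjointness check; the paper's approach instead exposes that disjointness as the structural mechanism behind existence, which it later exploits for other purposes.

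One small correction: the sentence ``By $\s$-equivariance, the action of $\r$ carries the summand $V(\la+j\mu^*)$ into $V(\la+(j-1)\mu^*)$'' is not justified by $\s$-equivariance alone (for instance, when $\s=\sl(2)$ and $\mu=1$, the map $V(1)\otimes V(j)\to Z$ could in principle hit $V(j+1)$ as well). The correct reason, already implicit in your setup, is that $\partial_r$ lowers the grading on $S(V(\mu^*))$ by one and each $V(\la+j\mu^*)=U(\s)(v_\la\otimes v_{\mu^*}^j)$ sits in degree~$j$; you should say this instead.
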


\begin{proof} In light of Proposition \ref{xrite}, existence follows if we can
prove that $V(\mu)$ is not disjoint with any of $V(\la^*+(i+1)\mu)\otimes V(\la+i\mu^*)$
while $\Lambda^2 V(\mu)$ is disjoint with all $V(\la^*+(i+2)\mu)\otimes V(\la+i\mu^*)$. Now
$$
\Hom(V(\mu),V(\la^*+(i+1)\mu)\otimes V(\la+i\mu^*))\cong V(\mu)^*\otimes V(\la^*+(i+1)\mu)\otimes V(\la+i\mu^*),
$$
as $\s$-modules, so
$$
\Hom_\s(V(\mu),V(\la^*+(i+1)\mu)\otimes V(\la+i\mu^*))\cong \Hom_\s(V(\la+(i+1)\mu^*),V(\mu^*)\otimes V(\la+i\mu^*))
$$
as vector spaces. It is clear that the latter space is not only non-zero but
in fact one dimensional.

Reasoning as above and using that fact that
$(\Lambda^2 V(\mu))^*\cong \Lambda^2 V(\mu^*)$
we see that the vector space $\Hom_\s(\Lambda^2 V(\mu),V(\la^*+(i+2)\mu)\otimes V(\la+i\mu^*))$
is isomorphic to $\Hom_\s(V(\la+(i+2)\mu^*),\Lambda^2 V(\mu^*)\otimes V(\la+i\mu^*))$. But
the latter is 0 since all weights of $\Lambda^2 V(\mu^*)\otimes V(\la+i\mu^*)$ are strictly less than $\la+(i+2)\mu^*$.

In regards to uniqueness, note that $V(\la^*+j\mu)\otimes
V(\la+i\mu^*)$ and $V(\mu)$ are disjoint provided $j-i\geq 2$.
This follows as above by observing that all weights of
$V(\mu^*)\otimes V(\la+i\mu^*)$ are strictly less than
$\la+j\mu^*$. Now apply Proposition \ref{d}.
\end{proof}

\begin{cor}
\label{dual} In the notation of Theorem \ref{constr}, there exists
one and only one uniserial $\g$-module, namely $Z(\la,b)^*$, that
has length $b+1$ and whose socle factors are
$V(\la^*+b\mu),...,V(\la^*+\mu),V(\la^*)$.
\end{cor}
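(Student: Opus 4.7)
The plan is to deduce the corollary from Theorem \ref{constr} together with the duality principle recorded in Lemma \ref{reves}. The uniseriality of $Z(\la,b)$ means that $0\subset V_1\subset \cdots\subset V_{b+1}=Z(\la,b)$ is its unique composition series, with factors $V(\la),V(\la+\mu^*),\ldots,V(\la+b\mu^*)$. By Lemma \ref{reves}, dualizing reverses the chain and dualizes each factor, so $Z(\la,b)^*$ admits the unique composition series
\[
0\subset V_{b+1}^0/V_{b+1}^0\subset V_b^0\subset\cdots\subset V_1^0\subset V_0^0=Z(\la,b)^*,
\]
with composition factors $V(\la+b\mu^*)^*,\ldots,V(\la+\mu^*)^*,V(\la)^*$.

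Next I would identify these duals as the prescribed $\s$-modules. Since $w_0$ is linear and $\mu^{**}=\mu$, one has $(\la+i\mu^*)^*=-w_0(\la+i\mu^*)=\la^*+i\mu$, and therefore $V(\la+i\mu^*)^*\cong V(\la^*+i\mu)$. Thus the socle factors of $Z(\la,b)^*$ are exactly $V(\la^*+b\mu),\ldots,V(\la^*+\mu),V(\la^*)$, establishing existence.

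For uniqueness, I would take any uniserial $\g$-module $V$ with these socle factors and reverse the argument. Applying Lemma \ref{reves} again, $V^*$ is a uniserial $\g$-module with composition factors $V(\la^*)^*,V(\la^*+\mu)^*,\ldots,V(\la^*+b\mu)^*$, which by the same dualization identity equals $V(\la),V(\la+\mu^*),\ldots,V(\la+b\mu^*)$. By Theorem \ref{constr}, $V^*\cong Z(\la,b)$, and dualizing once more gives $V\cong Z(\la,b)^*$.

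There is no real obstacle here: the work is entirely bookkeeping, and the only spot where care is needed is the verification that the operation $\nu\mapsto\nu^*=-w_0\nu$ on $\h^*$ is additive, so that $(\la+i\mu^*)^*=\la^*+i\mu$, which ensures the labels match up and the dual of the already known uniserial module $Z(\la,b)$ is indeed the required one.
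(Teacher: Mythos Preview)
Your proof is correct and follows exactly the approach of the paper, which simply states that the corollary is an immediate consequence of Theorem \ref{constr} and Lemma \ref{reves}. You have merely spelled out the details: dualize $Z(\la,b)$ via Lemma \ref{reves} to get existence, and for uniqueness dualize any candidate and invoke the uniqueness in Theorem \ref{constr}. (One small notational slip: in your displayed chain the first nonzero term should be $V_b^0$, not $V_{b+1}^0/V_{b+1}^0$; since $V_{b+1}=Z(\la,b)$ one has $V_{b+1}^0=0$.)
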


\begin{proof} Immediate consequence of Theorem \ref{constr} and
Lemma \ref{reves}.
\end{proof}

\begin{note} It is clear that all modules constructed in Theorem \ref{constr} and Corollary \ref{dual}
are non-isomorphic from each other, except in the obvious case
$b=0$ and $\la=\la^*$, when $Z(\la,b)=V(\la)\cong
V(\la)^*=Z(\la,b)^*$.
\end{note}

\begin{note}
\label{sl2} In the special case $\g=\sl(2)\ltimes V(m)$ these  results read as follows. Given integers $\ell,b\geq 0$
there exists one and only one uniserial $\g$-module having socle
factors either $V(\ell),V(\ell+m),...,V(\ell+bm)$ or
$V(\ell+bm),...,V(\ell+m),V(\ell)$. These modules will be
respectively denoted by $Z(\ell,b)$ and $Z(\ell,b)^*$.
\end{note}

\begin{note}
\label{lendos}
 Suppose $\g$ is perfect with Levi decomposition $\s\ltimes \r$ such that $[\r,\r]=0$.
Let $W_1,W_2$ be irreducible $\s$-modules. By Proposition
\ref{xrite} there exists a uniserial $\g$-module $V$ with socle
factors $W_1,W_2$ if and only if $\r$ is not disjoint with
$W_2^*\otimes W_1$.

Let $H=\Hom_\s(\r,W_2^*\otimes W_1)$ and set $P=P(H)$, the
associated projective space (i.e. the points of $P$ are the the
lines of $H$ through the origin). It is not difficult to see that
the isomorphism classes of such modules $V$ are parametrized by
the points of $P$. In particular, if $\dim(H)>1$ there are
infinitely many such classes.

%This does occur, for instance when $\s=\sl(3)$, $\r=V(1,2)=W_1$
%and $W_2=V(1,1)$ (see Exercise 11 of Section 24 in \cite{Hu}).

In the special case $\g=\sl(2)\ltimes V(m)$ the possibility $\dim(H)>1$ never arises.
Indeed, we have $W_1=V(a)$, $W_2=V(b)$, with $0\leq a\leq b$ (otherwise
consider $V^*$). Our previous comments and the Clebsch-Gordan
formula ensure that such $V$ exists if and only if $m$ is in the
list of numbers $b-a,b-a+2,...,b+a-2,b+a$. But then $V$
will be unique since $V(b)\otimes V(a)$ is multiplicity free and
Proposition \ref{d} applies.

\end{note}

%%%%%%%%%%%%%%%%%%%%%%%%%%%%%%%%%%%%%%%%%%%%%%%%%%%%%%%%%%%%%%%%%%%%%%%%%%%%%%%%%%%%%%%%%%
\section{An explicit matrix realization of the $\sl(2)\ltimes V(m)$-module $Z(\ell,b)$}
\label{secex}
%%%%%%%%%%%%%%%%%%%%%%%%%%%%%%%%%%%%%%%%%%%%%%%%%%%%%%%%%%%%%%%%%%%%%%%%%%%%%%%%%%%%%%%%%

In this section we consider the special case $\g=\sl(2)\times V(m)$ and construct a matrix realization
of $Z(\ell,b)$, where $\ell\geq 0$ and $b\geq 0$. Taking the opposite transpose of our representation yields a
matrix version of $Z(\ell,b)^*$.

The Lie algebra $\g$ has basis
$e,h,f,v_0,v_1,...,v_m$, subject to the following relations:
\begin{equation}
\label{rel1}
[h,e]=2e,\quad [h,f]=-2f,\quad [e,f]=h,
\end{equation}
\begin{equation}
\label{rel2}
[v_i,v_j]=0,\quad 0\leq i,j\leq m,
\end{equation}
\begin{equation}
\label{rel3}
[h,v_i]=(m-2i)v_i,\quad [e,v_i]=(m-(i-1))v_{i-1},\quad [f,v_i]=(i+1)v_{i+1},
\end{equation}
where $0\leq i\leq m$ and $v_{-1}=0=v_{m+1}$.

For $a\geq 0$ let $H(a),E(a),F(a)$ be the matrices in $\gl(a+1)$ all of whose entries are 0, except that
the diagonal of $H(a)$ is $(a,a-2,\dots,-a+2,-a)$ and, if $a\geq 1$, the first superdiagonal of $E(a)$ is $(a,\dots,2,1)$ and
the first subdiagonal of $F(a)$ is $(1,2,\dots,a)$. Set
$H(\ell,b)=H(\ell)\oplus\cdots\oplus H(\ell+b m)$,
$E(\ell,b)=E(\ell)\oplus\cdots\oplus E(\ell+b m)$ and
$F(\ell,b)=F(\ell)\oplus\cdots\oplus F(\ell+b m)$.

%Naturally, if $b=0$ we just let $H\mapsto H(\ell), E\mapsto E(\ell), F\mapsto F(\ell)$ and $v_i\mapsto 0$ for all $0\leq i\leq m$.
%Suppose henceforth that $b\geq 1$.

For $a\geq 0$ we define the $m+1$ rectangular matrices $W_0(a),...,W_m(a)$, all of size $(a+1)\times (a+m+1)$,
as follows:
$$
W_0(a)=(0_{(a+1)\times m} I_{a+1}),
W_1(a)=(0_{(a+1)\times (m-1)} I_{a+1} 0_{(a+1)\times 1}),
$$
$$
W_2(a)=(0_{(a+1)\times (m-2)} I_{a+1} 0_{(a+1)\times 2}),...,
W_m(a)=(I_{a+1}0_{(a+1)\times m}).
$$
We next scale these matrices to produce
$$
V_i(a)=(-1)^{i} \binom{m}{i}\; W_i(a),\quad 0\leq i\leq m.
$$

For $0\leq i\leq m$ let $V_i(\ell,b)$ be the block partitioned matrix all of whose blocks are equal to 0, except that if $b\geq 1$
the blocks along the first superdiagonal are $V_i(\ell),V_i(\ell+m),...,V_i(\ell+(b-1)m)$.

\begin{lemma} The map $h\mapsto H(\ell,b)$, $e\mapsto E(\ell,b)$, $f\mapsto F(\ell,b)$,
$v_i\mapsto V_i(\ell,b)$, where $0\leq i\leq m$, defines a matrix representation of $\g$ with associated module $Z(\ell,b)$.
\end{lemma}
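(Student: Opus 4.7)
The plan is to verify that the prescribed assignment defines a Lie algebra homomorphism $\rho:\g\to\gl(N)$ (where $N=\sum_{k=0}^{b}(\ell+km+1)$), and then identify the resulting module with $Z(\ell,b)$ using the uniqueness established in Theorem \ref{thm.general Z(l,b)}.

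First I would check the defining relations (\ref{rel1})--(\ref{rel3}) block by block. The $\sl(2)$-relations (\ref{rel1}) are immediate: $H(\ell+km),E(\ell+km),F(\ell+km)$ is the standard matrix model of $V(\ell+km)$, so the relations hold on each diagonal block, and therefore on the direct sum. For (\ref{rel2}), note that each $V_i(\ell,b)$ is strictly block upper triangular with nonzero blocks only on the first block superdiagonal; hence $V_iV_j$ has nonzero blocks only on the second superdiagonal, with $(k,k+2)$-block equal to $V_i(\ell+(k-1)m)V_j(\ell+km)$. A direct calculation using that $W_i(a)$ has a $1$ in entry $(r,r+m-i)$ and zero elsewhere shows
\[
W_i(a)\,W_j(a+m)=W_j(a)\,W_i(a+m)\,,
\]
both being the matrix with a $1$ in entry $(r,r+2m-i-j)$; the scalar factor $(-1)^{i+j}\binom{m}{i}\binom{m}{j}$ is symmetric in $i,j$, so $[V_i,V_j]=0$.

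For the mixed relations (\ref{rel3}), the block $(k,k+1)$ of $[H(\ell,b),V_i(\ell,b)]$ is $H(a)V_i(a)-V_i(a)H(a+m)$ with $a=\ell+km$. Because $V_i(a)_{r,s}$ is supported on $s-r=m-i$, the difference of diagonal entries $H(a)_{r,r}-H(a+m)_{s,s}$ reduces to $2(s-r)-m=m-2i$, giving the desired scalar. The cases $e,v_i$ and $f,v_i$ follow similarly: computing $E(a)V_i(a)-V_i(a)E(a+m)$ entry-wise reduces, at $s=r+m-i+1$, to $(-1)^{i}\binom{m}{i}(s-r-m-1)=-i\,(-1)^{i}\binom{m}{i}$, and matching this with $(m-i+1)V_{i-1}(a)$ requires exactly the identity $i\binom{m}{i}=(m-i+1)\binom{m}{i-1}$, which explains the scaling $V_i(a)=(-1)^i\binom{m}{i}W_i(a)$. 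The $f$-relation is analogous and uses $(i+1)\binom{m}{i+1}=(m-i)\binom{m}{i}$.

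With $\rho$ established as a representation, let $V$ be its module. The $\sl(2)$-structure is block diagonal, so $V\cong V(\ell)\oplus V(\ell+m)\oplus\cdots\oplus V(\ell+bm)$ as $\s$-modules, and the ordered block decomposition is an $\s$-basis in the sense of \S\ref{sec.Matrix recognition}. Since each first-superdiagonal block $V_i(\ell+km)$ is a nonzero scalar multiple of $W_i(\ell+km)\neq 0$ for every $i$, no superdiagonal block of $M_B(\r)$ vanishes identically, so Theorem \ref{fed} (applied to the perfect Lie algebra $\g$) shows $V$ is uniserial with socle factors $V(\ell),V(\ell+m),\ldots,V(\ell+bm)$ in that order. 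By Theorem \ref{thm.general Z(l,b)} (applied with $\mu=m$, hence $\mu^*=m$), such a uniserial module is unique up to isomorphism, so $V\cong Z(\ell,b)$.

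The main obstacle is the combinatorial verification of the $[e,v_i]$ and $[f,v_i]$ relations: the exact scaling $(-1)^i\binom{m}{i}$ is forced, and one has to carefully track the shift $s=r+m-i$ together with the binomial identity $i\binom{m}{i}=(m-i+1)\binom{m}{i-1}$ to recover the correct scalars. Everything else is bookkeeping.
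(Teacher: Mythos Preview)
Your proof is correct. The overall structure---verify the relations, invoke Theorem~\ref{fed} for uniseriality, then appeal to the uniqueness in Theorem~\ref{thm.general Z(l,b)}---matches the paper's, but the verifications of (\ref{rel2}) and (\ref{rel3}) proceed differently. You check all three mixed relations $[h,v_i]$, $[e,v_i]$, $[f,v_i]$ by direct entry-wise computation, using the binomial identity $i\binom{m}{i}=(m-i+1)\binom{m}{i-1}$; the paper instead observes that $U_0(a)$ is a highest weight vector of weight $m$ inside $\gl(a+1)\oplus\gl(a+m+1)$, so the $U_i(a)$ span a copy of $V(m)$, and the scaling $V_i=(-1)^i\binom{m}{i}U_i$ is then forced by the standard normalization $v_i=f^iv_0/i!$. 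This explains conceptually where the factor $(-1)^i\binom{m}{i}$ comes from, whereas your computation confirms a posteriori that it is the right choice. For (\ref{rel2}) you directly compute $W_i(a)W_j(a+m)=W_j(a)W_i(a+m)$, which is arguably cleaner than the paper's route: there the $\sl(2)$-equivariance of the bracket is used to reduce the vanishing of all $[V_i,V_j]$ to the single check $[V_0(\ell,b),V_m(\ell,b)]=0$, which is then reduced to $b=2$. Both approaches are short; yours is more elementary, the paper's more structural.
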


\begin{proof} Since $H(\ell,b),E(\ell,b),F(\ell,b)$ satisfy (\ref{rel1}) we see that
the map $h\mapsto H(\ell,b)$, $e\mapsto E(\ell,b)$, $f\mapsto F(\ell,b)$ defines a matrix representation of $\sl(2)$ whose associated module decomposes
as $V(\ell)\oplus V(\ell+m)\oplus\dots\oplus V(\ell+b m)$.

Given $a\geq 0$ let $U_i(a)$  be the matrix partitioned into 4 blocks, whose (1,2) block is
$W_i(a)$ and all other blocks are 0. Direct calculation shows that
$$
[H(a)\oplus H(a+m), U_0(a)]=m U_0(a),\quad [E(a)\oplus E(a+m), U_0(a)]=0
$$
and
\begin{equation}
\label{rel4}
[F(a)\oplus F(a+m),U_i(a)]=-(m-i) U_{i+1}(a),\; 0\leq i\leq m.
\end{equation}
Thus $U_0(a),\dots,U_m(a)$ is a basis for an $\sl(2)$-module, say $S_a$, isomorphic to $V(m)$. From (\ref{rel4}) we get
$$
f^i U_0(a)=(-1)^i m(m-1)\cdots (m-(i-1))U_i(a)=(-1)^i m!/(m-i)! U_i(a),\;\; 0\leq i\leq m,
$$
It follows that
$$
f^i U_0(a)/i!=(-1)^{i} \binom{m}{i}\; U_i(a),\quad 0\leq i\leq m
$$
is a basis for $S_a$ upon which $h,e,f$ act as in (\ref{rel3}). Hence $V_0(\ell,b),\dots,V_m(\ell,b)$ is a basis of an $\sl(2)$-module
upon which $h,e,f$ act via $H(\ell,b),E(\ell,b),F(\ell,b)$ as in~(\ref{rel3}).

Next we verify that the relations (\ref{rel2}) are preserved. By means of the actions of $e$ and $f$ on $V_0(\ell,b),\dots,V_m(\ell,b)$, it suffices to
verify $[V_0(\ell,b),V_m(\ell,b)]=0$. This easily reduces to the case $b=2$, which is confirmed through a simple calculation.

We thus have a matrix representation of $\g$. By Theorem \ref{fed} the associated module, say $V$, is uniserial with socle factors $V(\ell),V(\ell+m),...,V(\ell+b m)$, so $V\cong Z(\ell,b)$ by Proposition \ref{d}.
\end{proof}

%\newpage
Here we present a matrix realization for $m=2$ and $V\cong Z(1,2)$.
{\scriptsize
\[
\begin{array}{rr|rrrr|rrrrrr}
  h & e  & v_2 & -2v_1 &  v_0 &  0 \\
  f & -h & 0  & v_2 & -2v_1 & v_0  \\
 \hline
       & &  3h & 3e &  0 &  0  & v_2 & -2v_1 &  v_0 &  0 &  0 &  0 \\
       & &   f  & h   & 2e & 0 &  0  &  v_2 & -2v_1 & v_0 &  0 &  0    \\
       & &   0  & 2f  & -h & e  &  0  &  0  &  v_2 & -2v_1 & v_0 &  0     \\
       & &   0  &  0  & 3f  & -3h  &  0  &  0  &  0  &  v_2 & -2v_1 & v_0  \\
 \hline
       & &  & &  & &  5h  & 5e  &  0  &  0  &  0 &  0     \\
       & &  & &  & &   f  & 3h  & 4e  &  0  &  0 &  0       \\
       & &  & &  & &   0  & 2f  & h   & 3e  &  0  &  0       \\
       & &  & &  & &   0  &  0  & 3f  & -h  &  2e &  0      \\
       & &  & &  & &   0  &  0  &  0  & 4f  & -3h  &  e  \\
       & &  & &  & &   0  &  0  &  0  &  0  &  5f  &  -5h  \\
\end{array}
\]
}
A matrix realization of $Z(1,2)^*$ is obtained by taking the opposite transpose of the
above matrix. A suitable change of basis presents a realization of $Z(1,2)^*$
as the following block upper triangular matrices:
{\scriptsize
\[
\begin{array}{rrrrrr|rrrr|rr}
  5h  & 5e  &  0  &  0  &  0 &  0   &  10v_0  &  0    &  0  &  0  \\
  f  & 3h  & 4e  &  0  &  0 &  0    &  4 v_1  &  6v_0 &  0    &  0     \\
  0  & 2f  & h   & 3e  &  0  &  0   &    v_2  &  6v_1  & 3v_0    &  0      \\
  0  &  0  & 3f  & -h  &  2e &  0   &   0    &   3v_2  &  6v_1    &  v_0     \\
  0  &  0  &  0  & 4f  & -3h  &  e  &   0    &   0     &   6v_2   &  4v_1     \\
  0  &  0  &  0  &  0  &  5f  &  -5h &   0    &   0     &   0     &  10v_2  \\
 \hline
      & &   & &  & &   3h & 3e  &  0  &  0   & 3v_0 &   0    \\
      & &   & &  & &   f  & h   & 2e  &   0  & 2v_1 &   v_0     \\
      & &   & &  & &   0  & 2f  & -h  &  e   &  v_2 &  2v_1      \\
      & &   & &  & &   0  &  0  & 3f  & -3h  &  0   &  3v_2  \\
 \hline
     & &    & &   & &   & &  & &    h & e   \\
      & &   & &   & &   & &  & &    f & -h   \\
\end{array}
\]
}

%%%%%%%%%%%%%%%%%%%%%%%%%%%%%%%%%%%%%%%%%%%%%%%%%%%%%%%%%%%%%%%%%%%%%%%%%%%%%%%%%%%%%%%%%%
\section{Characterization of the uniserial modules $Z(\la,b)$ and  $Z(\la,b)^*$}
\label{ty}
%%%%%%%%%%%%%%%%%%%%%%%%%%%%%%%%%%%%%%%%%%%%%%%%%%%%%%%%%%%%%%%%%%%%%%%%%%%%%%%%%%%%%%%%%%

We adhere to the notation introduced in \S \ref{hist}. Let $V$ be a $\g$-module. By a weight vector we mean a non-zero
common eigenvector for the action of $\h$ on $V$. A highest weight vector, or just a maximal vector,
is a weight vector that is annihilated by all $e_\al$, $\al\in\Pi$.
The weight spaces $\r_{\mu}$ and $\r_{w_0\mu}$ as well as the root spaces $\s_\al$, $\al\in\Pi$,
are all one dimensional and we fix a spanning vector for each of them, say $u_{\mu}\in\r_{\mu}$, $u_{w_0\mu}\in\r_{w_0\mu}$
and $e_\al\in\s_\al$.

\begin{lemma}
\label{ayq} Let $V$ be a $\g$-module and let $v\in V$.

(a) Let $\al\in\Pi$. Then $e_\al u_{w_0\mu}^i v=0$ for all $i\geq 0$ if and only if $e_\al v=0$ and $[e_\al, u_{w_0\mu}]v=0$.

(b) Let $i\geq 0$. If $e_\al u_{w_0\mu}^i v=0$ for all $\al\in \Pi$ then $\r u_{w_0\mu}^i v$
is included in the $\s$-submodule of $V$ generated by $u_{w_0\mu}^{i+1} v$.

(c)  Let $\al\in\Pi$. If $e_\al v=0$ then  $e_\al u_\mu^i v=0$ for all $i\geq 0$.
\end{lemma}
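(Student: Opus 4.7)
The plan rests on three structural facts immediately available from \S\ref{hist}: the radical $\r=V(\mu)$ is abelian (so its elements commute with one another in $U(\g)$); $[e_\al,u_\mu]=0$ for every $\al\in\Pi$ since $u_\mu$ spans the highest weight line of $\r$; and $u_{w_0\mu}$ generates $\r$ as an $\s$-module under repeated application of the simple root vectors $e_\al$, $\al\in\Pi$.

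For part (a), the forward direction is routine: the case $i=0$ gives $e_\al v=0$, and the case $i=1$, combined with the identity $e_\al u_{w_0\mu}=u_{w_0\mu}e_\al+[e_\al,u_{w_0\mu}]$ in $U(\g)$, forces $[e_\al,u_{w_0\mu}]v=0$. The converse will be proved by induction on $i$: in the step I expand
\[
e_\al u_{w_0\mu}^{i+1}v = u_{w_0\mu}(e_\al u_{w_0\mu}^{i}v) + [e_\al,u_{w_0\mu}]\,u_{w_0\mu}^{i}v,
\]
where the first summand vanishes by induction, and the abelianness of $\r$ lets me rewrite the second as $u_{w_0\mu}^{i}[e_\al,u_{w_0\mu}]v$, which vanishes by hypothesis. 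Part (c) is a one-line calculation: $[e_\al,u_\mu]=0$ means that $e_\al$ and $u_\mu$ commute in $U(\g)$, so $e_\al u_\mu^i v = u_\mu^i e_\al v=0$.

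Part (b) is the main body of work. By the third structural fact above, every $r\in\r$ is a linear combination of iterated brackets $[e_{\al_1},[e_{\al_2},\dots,[e_{\al_k},u_{w_0\mu}]\dots]]$ with $\al_j\in\Pi$, and the plan is to show, by induction on $k$, that $r\cdot u_{w_0\mu}^{i}v$ lies in the $\s$-submodule $N$ generated by $u_{w_0\mu}^{i+1}v$. The base case $k=0$ is trivial, since $u_{w_0\mu}\cdot u_{w_0\mu}^{i}v = u_{w_0\mu}^{i+1}v\in N$. For the step, I expand the outer bracket as $e_{\al_1}r'-r'e_{\al_1}$ in $U(\g)$ and apply it to $u_{w_0\mu}^{i}v$: the subtrahend vanishes because $e_{\al_1}u_{w_0\mu}^{i}v=0$ by hypothesis, while the minuend equals $e_{\al_1}(r'\cdot u_{w_0\mu}^{i}v)$, which lies in $N$ by induction and remains in $N$ because $N$ is $\s$-stable.

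I do not anticipate any serious obstacle. The only pitfall to watch is in (b): one must rely on $\r$ being generated from $u_{w_0\mu}$ by applying \emph{only} the simple root vectors $e_\al$, $\al\in\Pi$, which is exactly the class of elements for which the hypothesis supplies vanishing on $u_{w_0\mu}^i v$.
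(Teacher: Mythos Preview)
Your proof is correct and follows essentially the same approach as the paper's: the inductive expansion of $e_\al u_{w_0\mu}^{i+1}v$ in (a), the span of $\r$ by iterated brackets $[e_{\al_1},\dots,[e_{\al_s},u_{w_0\mu}]\dots]$ together with induction on $s$ in (b), and the commutation $[e_\al,u_\mu]=0$ in (c) are exactly the paper's argument. Your write-up of the inductive step in (b) is in fact more explicit than the paper's, which compresses it to a single sentence.
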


\begin{proof} (a) Necessity is obvious. As for sufficiency, we argue by induction. The base case $i=0$ is given. Suppose the result is true for some $i\geq 0$.
Since $\r$ is abelian
$$
e_\al u_{w_0\mu}^{i+1}v=u_{w_0\mu} e_\al u_{w_0\mu}^i v+[e_\al, u_{w_0\mu}]u_{w_0\mu}^i v=0+u_{w_0\mu}^i [e_\al, u_{w_0\mu}]v=0.
$$

(b) It suffices to prove this for elements of $\r$ of the form $[e_{\al_1},\dots,[e_{\al_s},u_{w_0 \mu}]\dots]$ as these span $\r$.
If $s=0$ the multibracket reduces to $u_{w_0\mu}$ and
the result is true by definition. This case and the stated hypothesis yield the result by induction.

(c) Since $[e_\al,u_\mu]=0$ we obtain $e_\al u_\mu^i v=u_\mu^i e_\al v=0$.
\end{proof}

\begin{theorem}
\label{recog} Let $V$ be a $\g$-module,
$\la\in\Lambda^+$, $b\geq 0$ and let $u=u_{w_0\mu}$  (resp. $u=u_{\mu}$).
Then $V\cong Z(\la,b)$ (resp. $V\cong Z(\la,b)^*$)
if and only if there is a
vector $v\in V$ satisfying conditions (C1),(C2),(C3) (resp. (C1),(C2),(C3)$^*$) stated below.
\begin{enumerate}
  \item[(C1)] $v$ is a maximal vector of weight $\la+b\mu^*$ (resp. $\lambda^*$)
that generates $V$ as a $\g$-module.
  \item[(C2)] $u^b v\neq 0$, $u^{b+1}v=0$.
   \item[(C3)] $[e_\al,u_{w_0\mu}]v=0$ for all $\al\in\Pi$.
   \item[(C3)$^*$] $\r u_\mu^iv$ is included in the $\s$-module generated by $u_\mu^{i+1}v$ for all $0\leq i\leq b$.
\end{enumerate}
Moreover, in such case $v$ is unique up to scaling.
\end{theorem}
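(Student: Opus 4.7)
My strategy is to treat both cases in parallel; the structural observation is that the vectors $u^iv$ must be $\s$-maximal and that $\r$ must shift the resulting $\s$-decomposition strictly by one step, conditions (C3) and (C3)$^*$ being the two devices that enforce this shift. Assume $v\in V$ satisfies (C1), (C2) and either (C3) or (C3)$^*$. In the (C3) case, Lemma~\ref{ayq}(a) together with (C1),(C3) yields $e_\al u^i v=0$ for all $i\ge0$ and $\al\in\Pi$; in the (C3)$^*$ case, Lemma~\ref{ayq}(c) together with (C1) gives the same. Hence each $u^i v$ is $\s$-maximal, and $W_i:=\s\cdot u^i v\cong V(\la+(b-i)\mu^*)$ (resp.\ $V(\la^*+i\mu)$) for $0\le i\le b$ by (C2), with $W_{b+1}=0$. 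The strict shift $\r W_i\subseteq W_{i+1}$ is Lemma~\ref{ayq}(b) in the (C3) case, and is (C3)$^*$ in the dual case (extended from $\r u^i v$ to $\r\cdot\s u^i v$ via $[\r,\s]\subseteq\r$). Iterating the shift on $W_0=\s v$ gives $V=\sum_{i=0}^b W_i$ since $v$ generates $V$, and the pairwise distinct highest weights force the sum to be direct.

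\textbf{Uniseriality and conclusion.} Setting $V_j:=W_b\oplus W_{b-1}\oplus\cdots\oplus W_{b-j+1}$, the strict shift makes each $V_j$ a $\g$-submodule, and the factor $V_j/V_{j-1}$ is irreducible with trivial $\r$-action. I claim $\soc(V)=V_1=W_b$, arguing by induction on $b$: any irreducible $\g$-submodule of $V$ is annihilated by $\r$ (Lie's theorem, as $\r$ is a nilpotent ideal of the perfect Lie algebra $\g$), hence is $\s$-irreducible, and by the multiplicity-one $\s$-decomposition it coincides with some $W_j$; but $\r W_j\subseteq W_{j+1}$ is nonzero for $j<b$, forcing $j=b$. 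The quotient $V/W_b$ is generated by the class of $v$, which satisfies the theorem's hypotheses with parameters $(\la+\mu^*,b-1)$ in the $Z(\la,b)$ case and $(\la,b-1)$ in the dual case, so $V/W_b$ is uniserial of the correct type by induction and hence so is $V$. Theorem~\ref{constr} (respectively Corollary~\ref{dual}) then identifies $V$ with $Z(\la,b)$ (respectively $Z(\la,b)^*$).

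\textbf{Necessity and uniqueness of $v$.} Conversely, realize $Z(\la,b)$ canonically via Proposition~\ref{xrite}(b): the underlying $\s$-module is $\bigoplus_{i=0}^b V(\la+i\mu^*)$, with $\r$ acting only through fixed $\s$-embeddings $f_{i+1,i}\colon\r\to\Hom(V(\la+(i+1)\mu^*),V(\la+i\mu^*))$. Take $v$ to be an $\s$-maximal vector of weight $\la+b\mu^*$; such vectors sit in the $V(\la+b\mu^*)$ summand and form a one-dimensional space, whence uniqueness. Condition (C1) follows because $\la+b\mu^*$ is the top weight of $V$ and $v$ projects nontrivially to the top socle factor. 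For (C2), $f_{i+1,i}(u_{w_0\mu})$ has $\s$-weight $-\mu^*$ and is nonzero, so it carries the highest weight vector of $V(\la+(i+1)\mu^*)$ to that of $V(\la+i\mu^*)$; iterating gives $u^b v\ne0$ and $u^{b+1}v=0$. For (C3), $f_{b,b-1}([e_\al,u_{w_0\mu}])v$ would have weight $\la+(b-1)\mu^*+\al$ inside $V(\la+(b-1)\mu^*)$, exceeding its highest weight, and therefore vanishes. The dual case is treated identically using the canonical realization of $Z(\la,b)^*$, where (C3)$^*$ holds by construction since $\r$ acts on each summand strictly into the next.

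\textbf{Main obstacle.} The delicate step is the induction establishing $\soc(V)=W_b$: one must combine the strict-shift property $\r W_i\subseteq W_{i+1}$ with the multiplicity-one $\s$-decomposition to exclude every other candidate for an irreducible submodule, and then verify that the inductive hypotheses descend to $V/W_b$ with correctly shifted parameters. Otherwise, the proof is largely a matter of weight bookkeeping and the invocation of Theorem~\ref{constr} or Corollary~\ref{dual}.
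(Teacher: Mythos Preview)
Your sufficiency argument is correct and takes a route that differs mildly from the paper's. You establish the strict shift $\r W_i \subseteq W_{i+1}$ directly via the commutator identity, prove $\soc(V)=W_b$ by hand, and then run an induction on $b$ through $V/W_b$. The paper instead shows only the weaker inclusion $\r V_{b-i} \subseteq V_{b-(i-1)}$ (arguing via the zero-weight space for $\r$ in each successive quotient) and then invokes Theorem~\ref{fed} to obtain uniseriality, avoiding induction altogether. Both approaches work; yours is more self-contained while the paper's is shorter.

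Your necessity argument has a genuine gap. You assert that $f_{i+1,i}(u_{w_0\mu})$ ``carries the highest weight vector of $V(\la+(i+1)\mu^*)$ to that of $V(\la+i\mu^*)$'' because this operator has weight $-\mu^*$ and is nonzero; but a nonzero linear map may certainly annihilate a given vector, so this inference is unjustified. In the $Z(\la,b)$ case the gap is easily closed: since $u_{w_0\mu}$ is a lowest weight vector of $\r$, the operator $f_{i+1,i}(u_{w_0\mu})$ commutes with every $T(f_\al)$, so if it killed the highest weight vector it would kill all of $V(\la+(i+1)\mu^*)$, contradicting $f_{i+1,i}(u_{w_0\mu})\neq 0$. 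For $Z(\la,b)^*$, however, the analogous observation (that $f(u_\mu)$ commutes with the raising operators) does not help, because raising operators annihilate the highest weight vector rather than generate the module from it. The paper handles this by starting from an arbitrary $w$ with $u_\mu w\neq 0$, pushing $u_\mu w$ up to a maximal vector via the $e_\al$ (which commute with $u_\mu$), and noting that the resulting pre-image must then have weight $\la^*+i\mu$, hence be the highest weight vector. Without this step your verification of (C2) in the dual case is incomplete, and your claim that (C3)$^*$ ``holds by construction'' then also collapses, since it relies on $u_\mu^{i+1}v$ being nonzero and hence generating the $(i{+}1)$-st summand.
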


\begin{proof} This naturally breaks into two parts.

\medskip

\noindent {\sc Sufficiency.} Let $W_i$ be the $\s$-submodule of $V$ generated by $u^i v$
for $i=0,\dots,b$. Then (C1)-(C3) and Lemma \ref{ayq} ensure that
$u^i v$ is a maximal vector of weight $\la+(b-i)\mu^*$ (resp. $\lambda^*+i\mu$), so
$W_{i}\cong V(\la+(b-i)\mu^*)$ (resp. $W_{i}\cong V(\la^*+i\mu)$).
Let $V_{b+1}=0$ and set
$$
V_{b-i}=W_b\oplus\cdots \oplus W_{b-i},\quad 0\leq i\leq b.
$$
We claim that $V_{b-i}$ is a $\g$-submodule of $V$ for all $i=0,\dots,b$.
It suffices to show that $\r V_{b-i}\subset V_{b-(i-1)}$. Since the 0-weight space
of $\r$ acting on any $\g$-module is $\s$-invariant and
$V_{b-i}/V_{b-(i-1)}$ is $\s$-irreducible, it suffices to prove
that $\r$ has a common 0-eigenvector in $V_{b-i}/V_{b-(i-1)}$.
We contend that $u^{b-i}v+V_{b-(i-1)}\in V_{b-i}/V_{b-(i-1)}$ is non-zero
and annihilated by $\r$. That $u^{b-i}v\not\in V_{b-(i-1)}$ follows from the
fact that $u^{b-i}v$ is a maximal vector and
none of the maximal vectors of $V_{b-(i-1)}$ has the same weight as $u^{b-i}v$.
It follows from (C2) and Lemma \ref{ayq} (resp. (C2) and (C3)$^*$) that
$\r u^{b-i}v\in V_{b-(i-1)}$. This proves our contention and hence the claim.

The $\g$-invariance of $V_0$ and (C1) now yield $V=V_0$. We have shown that
$$
0=V_{b+1}\subset V_b\subset\cdots\subset V_0=V
$$
is a composition series of the $\g$-module $V$, with composition factors
$$
V_{b-i}/V_{b-(i-1)}\cong V(\la+i\mu^*)\quad \text{(resp. $V_{b-i}/V_{b-(i-1)}\cong V(\la^*+(b-i)\mu)$)}.
$$
Since $u u^{b-i}v\in W_{b-(i-1)}$, $0<i\leq b$, Theorem \ref{fed}
ensures that $V$ is uniserial with socle factors
$V(\la),V(\la+\mu^*),...,V(\la+b\mu^*)$
(resp. $V(\la^*+b\mu),...,V(\la^*+\mu),V(\la^*)$).
From the uniqueness part of Theorem \ref{constr} we conclude that $V\cong Z(\la,b)$ (resp. $V\cong Z(\la,b)^*$)

\medskip

\noindent{\sc Necessity.} By assumption the socle series of $V$, say
$$0=V_{b+1}\subset V_b\subset\cdots\subset V_0=V,$$
has irreducible factors $$V_{b-i}/V_{b-(i-1)}\cong V(\la+i\mu^*)\quad (\text{resp.}\; V_{b-i}/V_{b-(i-1)}\cong V(\la^*+(b-i)\mu).$$
Up to scaling $V$ has a unique maximal vector, say $v_i$,
of weight $\la+ i\mu^*$ (resp. $\la^*+(b-i)\mu)$). In any uniserial module, a vector belonging only to the last term of the socle series
generates the entire module. Hence $V$ is generated by $v=v_b\notin V_1$.

We know from Lemma \ref{b} that $\r V_{b-i}\subseteq V_{b-(i-1)}$. In particular $u^{b+1}v=0$.

Suppose next $V\cong Z(\la,b)$. If $0\leq i\leq b$ and $u^i v\neq 0$ then $u^i v$ is a maximal vector
since its weight, namely $\la+(b-i)\mu^*$, is the highest in $V_i$. Since $v\neq 0$ and $u^{b+1} v=0$
there is an index $i$ satisfying $0\leq i\leq b$, $u^i v\neq 0$ and $u^{i+1}v=0$. Our preceding comment
implies $e_\al u^i v=0$ for all $\al\in\Pi$. This and $uu^i v=0$ yield $\r u^i v=0$, that is,
$u^i v\in\mathrm{soc}(V)\cong V(\la)$. But the highest weight in $V(\la)$
is $\la$ and $u^i v$ has weight $\la+(b-i)\mu^*$, so $i=b$. Thus $u^i v$ is a maximal vector for all $0\leq i\leq b$.
%In particular, $u^b v\neq 0$ and $[e_\al,u_{w_0\mu}]v=0$ for all $\al\in\Pi$.

Suppose finally $V\cong Z(\la,b)^*$. Thus $V$ has socle factors $V(\la^*+b\mu),\dots, V(\la^*)$
and we view these as $\s$-submodules of $V$. We claim that $u$ sends a maximal vector of $V(\la^*+i\mu)$
into one of $V(\la^*+(i+1)\mu)$ for all $0\leq i<b$. By assumption  $\r V(\la^*+i\mu)$ is non-zero
and included in  $V(\la^*+(i+1)\mu)$. Since $\r$ is an irreducible $\s$-module, it follows that $u_\mu V(\la^*+i\mu)\neq 0$,
so there is a weight vector $w$ in $V(\la^*+i\mu)$ not annihilated by $u_\mu$. Since $[e_\al,u_\mu]=0$ for all $\al\in\Pi$
by repeatedly applying the $e_\al$ to $u_\mu w$ we may assume that $u_\mu w$ is a maximal vector of $V(\la^*+(i+1)\mu)$,
in which case $w$ must have weight $\la^*+i\mu$. This proves our claim. Since $v$ is a maximal vector of $V(\la^*)$
we deduce that $u^i v$ is a maximal vector of $V(\la^*+i\mu)$ for all $0\leq i\leq b$. Moreover, $\r u^b v\subseteq \r V(\la^*+b\mu)=0$
and $\r u^i v\subseteq \r V(\la^*+i\mu)\subseteq V(\la^*+(i+1)\mu)$, which is $\s$-generated by $u^{i+1} v$ for all $0\leq i<b$.
\end{proof}

%%%%%%%%%%%%%%%%%%%%%%%%%%%%%%%%%%%%%%%%%%%%%%%%%%%%%%%%%%%%%%%%%%%%%%%%%%%%%%%%%%%%%%%%%%
\section{A natural construction of the $\s\ltimes V(\mu)$-module $Z(0,b)$}
\label{sec5}
%%%%%%%%%%%%%%%%%%%%%%%%%%%%%%%%%%%%%%%%%%%%%%%%%%%%%%%%%%%%%%%%%%%%%%%%%%%%%%%%%%%%%%%%%%

As mentioned in the Introduction, there have been recent constructions of indecomposable modules for a Lie algebra $\g$ by
embedding $\g$ into a semisimple Lie algebra $\t$ and restricting an irreducible $\t$-module
to $\g$. In this section we use the characterization given in Theorem \ref{recog} to
produce $Z(0,b)$ in the spirit just described.

%Indeed, we only need to explicitly construct $Z(\mu,0,1)$, which
%is easy, and then all other $Z(\mu,0,b)$ are obtained by embedding
%$\s\ltimes V(\mu)$ into a suitable $\sl(d)$ and using some its
%irreducible representations. Naturally, once $Z(\mu,0,b)$ is
%constructed, factoring the various terms of its socle series will
%produce all uniserial modules $Z(\mu,a\mu^*,b)$, $a\geq 0$.

We adopt the notation introduced at the beginning of
\S\ref{hist} and \S\ref{ty}. In particular, $\g=\s\ltimes \r$, where
$\r=V(\mu)$. Consider the $\s$-module $W=\r^*\oplus\C w$, where
$\s$ acts trivially on  $\C w$. We make $W$ into a $\g$-module as
follows:
\begin{equation}
\label{formi} (s+r)(f+aw)=sf+f(r)w,\quad s\in\s, r\in\r,f\in
\r^*,a\in\C.
\end{equation}
This gives a representation $\g\to\sl(W)$ (which is an embedding
if $\r$ is faithful and, in particular, if $\s$ is simple).

Let $f\in\r^*$ be the only linear functional such that
$f(u_{w_0\mu})=1$ and $f(z)=0$ for any $z$ belonging to a weight
space in $\r$ of weight different from $w_0\mu$. It is clear from
this definition that
$$f\in(\r^*)_{-w_0\mu}=(\r^*)_{\mu^*}.$$
Fix $b\geq 0$ and let $X=S^b(W)$, the $b$th symmetric power of
$W$. This is an irreducible $\sl(W)$-module. We view $X$ as a
$\g$-module via the Lie homomorphism $\g\to\sl(W)$. Let $V$ be the
$\g$-submodule of $X$ generated by $f^b$.

\begin{theorem} The $\g$-module $V$ is isomorphic to $Z(0,b)$. Moreover, the $\g$-module $X$ is
indecomposable, with trivial socle, full socle series
\begin{equation}
\label{tu} 0=\mathrm{soc}^0(X)\subset \mathrm{soc}^1(X)\subset
\mathrm{soc}^2(X)\subset\cdots\subset \mathrm{soc}^{b+1}(X)=X,
\end{equation}
and socle factors
$$\mathrm{soc}^{i+1}(X)/\mathrm{soc}^i(X)\cong S^i(\r^*)\cong S^i(V(\mu^*)),\quad 0\leq i\leq b.$$
Thus the socle factors of $V$, namely
$V(0),V(\mu^*),...,V(b\mu^*)$, are precisely the top summands of
the socle factors of~$X$. In particular, $X$ itself need not be
uniserial, but it is so in the very special case $\g=\sl(2)\ltimes
V(1)$, when $X=V$.
\end{theorem}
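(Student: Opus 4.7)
The plan is to work out the $\g$-module structure of $X=S^b(W)$ explicitly via the natural $w$-degree grading, read off the socle series by hand, and then identify $V$ as $Z(0,b)$ using the characterization in Theorem \ref{recog}. As an $\s$-module, since $\s$ acts trivially on $\C w$ and preserves $\r^*$, the graded pieces $X_i:=S^i(\r^*)\cdot w^{b-i}$ for $0\le i\le b$ afford an $\s$-decomposition $X=\bigoplus_{i=0}^{b} X_i$ with $X_i\cong S^i(V(\mu^*))$. Each $r\in\r$ acts as a derivation satisfying $r\cdot f=f(r)w$ and $r\cdot w=0$, whence
\[
r\cdot\bigl(f_1\cdots f_i\,w^{b-i}\bigr)=w^{b-i+1}\sum_{k=1}^{i}f_k(r)\prod_{j\ne k}f_j,
\]
which is the directional derivative of $f_1\cdots f_i\in S^i(\r^*)$ in direction $r$, shifted by one in the $w$-degree. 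In particular $\r\cdot X_i\subseteq X_{i-1}$.

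Next I would identify the socle series. Set $Y_j=\bigoplus_{i=0}^{j}X_i$. Each $Y_j$ is a $\g$-submodule (it is $\s$-stable and $\r$ lowers the $\r^*$-degree), and on $Y_j/Y_{j-1}\cong S^j(\r^*)$ the radical $\r$ acts trivially, so the quotient is completely reducible as an $\s$-module, giving $Y_j/Y_{j-1}\subseteq\soc(X/Y_{j-1})$. For the reverse inclusion, if $x+Y_{j-1}$ is annihilated by $\r$ with representative in $\bigoplus_{i\ge j}X_i$, then the component $x_i\in X_i$ for each $i>j$ must satisfy $\partial_r x_i=0$ for every $r\in\r$, forcing $x_i=0$ since a positive-degree homogeneous polynomial on $\r$ whose every directional derivative vanishes is identically zero. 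Thus $\soc^{j+1}(X)=Y_j$ with the claimed socle factors. In particular $\soc(X)=Y_0=\C w^b$ is the trivial module $V(0)$, so $X$ cannot be a nontrivial direct sum and is indecomposable.

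Finally I would identify $V$ by applying Theorem \ref{recog} with $\la=0$ and test vector $v=f^b$. Condition (C1) is clear: $v$ generates $V$ by definition, has weight $b\mu^*$, and is killed by each $e_\al$ since $f$ is a highest weight vector of $\r^*\cong V(\mu^*)$ and $e_\al$ acts as a derivation on $S^b(W)$. For (C2), the identity $u_{w_0\mu}\cdot f=f(u_{w_0\mu})w=w$ combined with an easy induction yields $u_{w_0\mu}^k f^b=\tfrac{b!}{(b-k)!}f^{b-k}w^k$, so $u_{w_0\mu}^b v=b!\,w^b\ne0$ and $u_{w_0\mu}^{b+1}v=0$. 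For (C3), $[e_\al,u_{w_0\mu}]$ lies in the weight space $\r_{w_0\mu+\al}\ne\r_{w_0\mu}$, and by its definition $f$ vanishes on every weight space of $\r$ other than $\r_{w_0\mu}$, so $[e_\al,u_{w_0\mu}]\cdot f^b=b\,f^{b-1}f([e_\al,u_{w_0\mu}])\,w=0$. Theorem \ref{recog} now gives $V\cong Z(0,b)$. The socle factors $V(j\mu^*)$ of $V$ are precisely the top components of $S^j(V(\mu^*))$ (with multiplicity one), matching the socle factors of $X$. Since $S^j(V(\mu^*))$ is typically reducible for $j\ge 2$, the socle factors of $X$ need not be irreducible and $X$ itself need not be uniserial; however, for $\g=\sl(2)\ltimes V(1)$ each $S^j(V(1))=V(j)$ is already $\sl(2)$-irreducible, so $X$ has irreducible socle factors, is therefore uniserial, and a dimension count forces $X=V$.

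The main technical point is the socle filtration computation, specifically the elementary but essential fact that a positive-degree homogeneous polynomial on $\r$ with vanishing gradient is zero; this both produces the length-$b+1$ filtration and forces indecomposability. All remaining steps are essentially formal once the graded structure and the derivation action of $\r$ are in place.
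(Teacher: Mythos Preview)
Your proof is correct and follows essentially the same approach as the paper: you apply Theorem~\ref{recog} to $v=f^b$ with the same verifications of (C1)--(C3), and you compute the socle series via the $w$-degree grading $X_i=S^i(\r^*)\cdot w^{b-i}$ exactly as the paper does. The only minor difference is cosmetic: where the paper invokes the general fact (stated in the Introduction) that for a perfect Lie algebra the socle layers are the successive $\r$-annihilator subspaces, you instead argue directly via the directional-derivative observation that a positive-degree homogeneous polynomial with vanishing gradient is zero---which is precisely what that general fact reduces to here.
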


\begin{proof} The first assertion follows at once from Theorem
\ref{recog} applied to $v=f^b$. Indeed, $v$ is clearly a maximal vector of $V$ of weight $b\mu^*$
that generates $V$ as a $\g$-module. Moreover, $$u^b f^b=b!
w^b\neq 0\text{ and }u^{b+1}f^b=0,\quad u=u_{w_0\mu}$$ and the very definition of $f$ gives
$$[e_\al,u]f=f([e_\al,u])w=0,\text{ so } [e_\al,u]f^b=0,\quad
\al\in\Pi,\; u=u_{w_0\mu}.$$

As remarked in the Introduction,
$\mathrm{soc}^{i+1}(X)/\mathrm{soc}^i(X)$ is the 0-weight space
for the action of $\r$ on $X/\mathrm{soc}^i(X)$. The formula
(\ref{formi}) makes it clear that
$$\mathrm{soc}^{i+1}(X)=w^b S^0(\r^*)\oplus  w^{b-1}S^1(\r^*)\oplus\cdots\oplus  w^{b-i}S^i(\r^*),\quad 0\leq i\leq b
$$
which gives the isomorphisms of $\s$-modules
$$
\mathrm{soc}^{i+1}(X)/\mathrm{soc}^i(X)\cong w^{b-i}S^i(\r^*)\cong
S^i(\r^*),\quad 0\leq i\leq b.
$$
The remaining assertions now follow immediately.
\end{proof}

The special case $\g=\sl(2)\ltimes V(m)$ can be translated as
follows. Let $\g\to \gl(m+2)$ be the matrix representation defined
in \S\ref{secex} for the uniserial $\g$-module with socle factors $V(0),V(m)$.

Let $S$ be the algebra of polynomials in $m+2$ variables
$X_1,...,X_{m+2}$. This is a module for $\gl(m+2)$, where each
basic matrix $E_{ij}$ acts via derivations on $S$ by means of
$M_{X_i}\circ \partial/\partial X_j$, i.e. partial differentiation
with respect to $X_j$ followed by multiplication by $X_i$.

Given $b\geq 0$, the subspace $X$ of $S$ of all homogeneous
polynomials of degree $b$ is $\gl(m+2)$-stable. We may thus view
$X$ as a $\g$-module via $\g\to \gl(m+2)$, and consider the
$\g$-submodule $V$ of $X$ generated by $X_2^b$. It follows immediately
from Theorem \ref{recog} that $V\cong Z(0,b)$.

Note finally that in the very special case $m=1$ we have $X=V$. In this case, by factoring the terms of
the socle series of $V$ we obtain all $\g$-modules
$Z(\ell,b)$, $\ell\geq 0$.

%%%%%%%%%%%%%%%%%%%%%%%%%%%%%%%%%%%%%%%%%%%%%%%%%%%%%%%%%%%%%%%%%%%%%%%%%%%%%%%%%%%%%%%%%%
\section{Other uniserial modules}\label{sec.Other Uniserials}
%%%%%%%%%%%%%%%%%%%%%%%%%%%%%%%%%%%%%%%%%%%%%%%%%%%%%%%%%%%%%%%%%%%%%%%%%%%%%%%%%%%%%%%%%%

The uniserial modules $Z(\la,b)$ and their duals are not the only possible ones, even in
the special case $\g=\sl(2)\times V(m)$. We already noted this when dealing with uniserial
modules of length in Note \ref{lendos}. In this section we produce further exceptions, in this case of lengths 3 and 4.
 It is be shown in \S\ref{sec.Classification} that no other exceptions exist.

We maintain throughout the notation introduced
in \S\ref{hist}. In particular, $\g=\s\ltimes \r$, where
$\r=V(\mu)$.

\begin{lemma}
\label{poq}
 Let $\la\in\Lambda^+$. There is a unique uniserial $\g$-module with socle
factors $V(0),V(\mu^*),V(\la)$ provided $V(\la)$ occurs once
$V(\mu^*)\otimes V(\mu^*)$ but not in $\Lambda^2(V(\mu^*))$.
Equivalently, when $V(\la)$ occurs once in $S^2(V(\mu^*))$ and
$V(\mu^*)\otimes V(\mu^*)$.
\end{lemma}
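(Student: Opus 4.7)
My plan is to derive both existence and uniqueness from the general criteria already established in Propositions \ref{xrite}(b) and \ref{d}, after translating the two hypotheses into statements about the $\s$-homomorphism spaces that appear there. Set $W_1=V(0)$, $W_2=V(\mu^*)$ and $W_3=V(\lambda)$. The elementary identity
\[
\dim\Hom_\s\bigl(V(\lambda),V(\mu^*)\otimes V(\mu^*)\bigr)=\dim\Hom_\s\bigl(V(\mu),V(\lambda^*)\otimes V(\mu^*)\bigr),
\]
both sides being the multiplicity of $V(\lambda^*)$ in $V(\mu)\otimes V(\mu)$, shows that the first hypothesis is equivalent to $\r=V(\mu)$ being a constituent of multiplicity exactly one of $W_3^*\otimes W_2$. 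Similarly, using $(\Lambda^2 V(\mu))^*\cong\Lambda^2 V(\mu^*)$, the second hypothesis is equivalent to $\Lambda^2\r$ being disjoint from $W_3^*\otimes W_1=V(\lambda^*)$. The equivalence of the two phrasings of the hypothesis is immediate from $V(\mu^*)\otimes V(\mu^*)=S^2V(\mu^*)\oplus\Lambda^2V(\mu^*)$.

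For existence I would invoke the sufficient condition of Proposition \ref{xrite}(b): $\r$ is not disjoint from $W_2^*\otimes W_1\cong V(\mu)$ (trivially) nor from $W_3^*\otimes W_2$ (the first hypothesis), while $\Lambda^2\r$ is disjoint from $W_3^*\otimes W_1$ (the second hypothesis). This directly produces a uniserial $\g$-module with socle factors $V(0),V(\mu^*),V(\lambda)$.

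For uniqueness I would apply Proposition \ref{d}. Condition (a) holds because both $\dim\Hom_\s(\r,W_2^*\otimes W_1)$ and $\dim\Hom_\s(\r,W_3^*\otimes W_2)$ equal one (the latter by the multiplicity-one part of the hypothesis, via the identity above). Condition (b), which in this length-three case reduces to $\Hom_\s(V(\mu),V(\lambda^*))=0$, holds whenever $\lambda\neq\mu^*$, and in that regime uniqueness is immediate.

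The one delicate step, and the place where I expect to do real work, is the borderline case $\lambda=\mu^*$: here Proposition \ref{d}(b) fails and the first superdiagonal no longer determines the representation, because a one-parameter family of candidate $(1,3)$ blocks $C\in\Hom_\s(\r,\Hom(W_3,W_1))$ enters the picture. I would handle this by a direct change-of-basis argument on an $\s$-basis of type 2: conjugating by $T=I+N$ with $N_{2,3}=c\cdot\mathrm{id}_{V(\mu^*)}$ and all other blocks of $N$ zero transforms the $(1,3)$ block into $C+cA$, where $A$ is the nonzero $(1,2)$ block; since $A$ and $C$ both live in the same one-dimensional space $\Hom_\s(V(\mu),\Hom(V(\mu^*),V(0)))$, a suitable choice of $c$ normalizes $C$ to $0$, and the remaining normalization of the superdiagonal blocks by a block-diagonal scalar conjugation gives uniqueness in this case as well.
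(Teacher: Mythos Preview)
Your proposal is correct and follows essentially the same route as the paper: existence and generic uniqueness via Propositions \ref{xrite}(b) and \ref{d}, and the borderline case $\lambda=\mu^*$ handled by conjugating with a block unipotent matrix having $c\cdot\mathrm{id}_{V(\mu^*)}$ in the $(2,3)$ slot to clear the $(1,3)$ block. The only point to make explicit is that for this conjugation to leave the block-diagonal $\s$-action untouched you must first choose the $\s$-basis so that the matrix representations on $W_2$ and $W_3$ coincide (which the paper does state); once that is said, your argument and the paper's are identical.
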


\begin{proof} This follows easily from Propositions
\ref{xrite} and \ref{d}, except for uniqueness when $\la=\mu^*$.
In this case there is a uniserial $\g$-module $V$ with socle
factors $V(0),V(\mu^*),V(\mu^*)$ and we need to establish the
uniqueness of $V$ up to isomorphism.

Let $B$ be an $\s$-basis of $V$ which yields identical matrix
representations of $\s$ on $W_2,W_3$ in the notation of
(\ref{tipo}). Our hypotheses ensure that each of the blocks (1,2),
(2,3), (1,3) of $M_B(\r)$ is completely determined, up to a scalar, once
$B$ is fixed. Moreover, this scalar must be non-zero in the first
two cases. The (1,3) block of $M_B(\r)$ is a scalar multiple, say
by $a\in\C$, of the (1,2) block. Conjugating by the block matrix
$$
\left(%
\begin{array}{ccc}
  1 & 0 & 0 \\
  0 & 1 & a \\
  0 & 0 & 1 \\
\end{array}%
\right)
$$
we obtain a new matrix representation which is identical to the
first except that the (1,3) block of $M_B(\r)$ is now surely 0. The result
follows.
\end{proof}

Under the hypotheses of Lemma \ref{poq}, there is
 a unique uniserial module with socle factors $V(\la^*),V(\mu),V(0)$, dual to the above. Clearly, there is at most one uniserial module of both types, namely the self-dual module with socle factors $V(0),V(\mu),V(0)$,
where $\mu=\mu^*$. We next find explicit conditions for the existence of such a module. Let $\mu=\mu^*$. Then, up to scaling, there is one and only one non-zero $\s$-invariant bilinear form~$\phi:V(\mu)\times V(\mu)\to\C$,
necessarily non-degenerate. By our discussion in \S\ref{secext} there is a uniserial $\g$-module with
socle factors $V(0),V(\mu),V(0)$ if and only if the $\s$-homomorphism $F:\Lambda^2 V(\mu)\to\mathrm{Hom}(\C,\C)$ associated to (\ref{nma})
is trivial. But $F_{v\wedge w}(a)=a(\phi(v,w)-\phi(w,v))$
for all $a\in\C$ and $v,w\in V(\mu)$. This is 0 if and only if $\phi$ is symmetric. We have proven

\begin{lemma}
\label{nodeg1} If $\mu=\mu^*$ there is a uniserial module
with socle factors $V(0),V(\mu),V(0)$ if and only if the non-zero $\s$-invariant bilinear form on $V(\mu)$ is symmetric.
\end{lemma}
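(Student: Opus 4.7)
My plan is to deduce the lemma from Proposition \ref{xrite}(b) applied to the sequence $W_1 = V(0)$, $W_2 = V(\mu)$, $W_3 = V(0)$. Since $\mu = \mu^*$, both $W_2^* \otimes W_1 \cong V(\mu)$ and $W_3^* \otimes W_2 \cong V(\mu)$ contain $\r = V(\mu)$ as a constituent, so the first hypothesis of Proposition \ref{xrite}(b) is automatic. Thus a uniserial $\g$-module with the prescribed socle factors exists if and only if, for some non-zero $\s$-homomorphisms $f_{2,1} \colon V(\mu) \to \Hom(V(\mu), \C)$ and $f_{3,2} \colon V(\mu) \to \Hom(\C, V(\mu))$, the Lie subalgebra of $\gl(V)$ generated by $f_{2,1}(r) + f_{3,2}(r)$, $r \in V(\mu)$, is abelian. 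Since these operators are strictly block upper triangular with non-zero entries only in the first superdiagonal, their commutators all lie in the $(1,3)$-block $\Hom(\C, \C) = \C$, and abelianness reduces, via (\ref{nma}), to the vanishing of an induced $\s$-homomorphism $\Lambda^2 V(\mu) \to \C$.

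A key point is that the choices $f_{2,1}$ and $f_{3,2}$ are rigid: by Schur's lemma, $\dim \Hom_\s(V(\mu), V(\mu)^*) = \dim \Hom_\s(V(\mu), V(\mu)) = 1$ (using $\mu = \mu^*$). Rescaling each $f_{i+1,i}$ merely multiplies the commutator by a non-zero scalar, so vanishing is an intrinsic property, not one that can be engineered by a clever choice. With the unique (up to scalar) non-zero $\s$-invariant bilinear form $\phi$ on $V(\mu)$, I would take $f_{2,1}(v) = \phi(v, -)$ and $f_{3,2}(w)(1) = w$. A direct computation on the $(1,3)$-block yields
\[
\bigl( f_{2,1}(v)\, f_{3,2}(w) - f_{2,1}(w)\, f_{3,2}(v) \bigr)(1) = \phi(v,w) - \phi(w,v).
\]
Hence the induced map $\Lambda^2 V(\mu) \to \C$ vanishes identically if and only if $\phi$ is symmetric, yielding both directions of the equivalence.

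No serious obstacle presents itself, as essentially all the heavy lifting has already been done in Proposition \ref{xrite}(b). The one mild subtlety worth emphasizing is the rigidity of the $\s$-homomorphisms $f_{i+1,i}$, which is what allows the obstruction to be read off directly from $\phi$ and justifies the use of Proposition \ref{xrite}(b) in both directions rather than merely the sufficient direction.
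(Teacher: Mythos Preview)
Your proposal is correct and follows essentially the same route as the paper: both invoke Proposition~\ref{xrite}(b), reduce the question to the vanishing of the induced $\s$-map $\Lambda^2 V(\mu)\to\Hom(\C,\C)$ coming from (\ref{nma}), and compute this map explicitly as $v\wedge w\mapsto \phi(v,w)-\phi(w,v)$. Your added emphasis on the rigidity of $f_{2,1}$ and $f_{3,2}$ (uniqueness up to scalar, so that ``for some choice'' becomes ``for the specific choice'') is a useful clarification that the paper leaves implicit.
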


\begin{note}
\label{ocho3}
 Suppose that $\mu=\mu^*$ and the
hypotheses of Lemma \ref{poq} are met for $\la=\mu$, i.e., $V(\mu)$ occurs once in $S^2(V(\mu))$ and
$V(\mu)\otimes V(\mu)$. Then, clearly,
there is a uniserial module with socle factors $V(0),V(\mu),V(\mu),V(0)$. However,
in this case the
isomorphism classes of such modules are parametrized by the
complex numbers. Indeed, once all diagonal blocks of $M_B(\s)$ as
well as the first superdiagonal blocks $M_B(\r)$ have been fixed
and the block (1,3) of $M_B(\r)$ has been cleared, there is no way
to modify the block (2,4) of $M_B(\r)$.
\end{note}

We next adapt the above observations to the special case $\g=\sl(2)\ltimes V(m)$. Recall that $V(m)\otimes V(m)=V(2m)\oplus V(2m-2)\oplus\cdots\oplus V(2)\oplus V(0)$, where $S^2(V(m))=V(2m)\oplus V(2m-4)\oplus\cdots$ and $\Lambda^2(V(m))=V(2m-2)\oplus V(2m-6)\oplus\cdots$.
Thus $V(\ell)$ appears in $S^2(V(m))$ if and only if $\ell\leq 2m$ and $2m\equiv \ell\mod 4$. In particular,
$V(0)$ appears in $S^2(V(m))$ (that is, the non-zero $\sl(2)$-invariant bilinear form on $V(m)$ is symmetric) if and only if $m$ is even. We have shown

\begin{lemma} Let $\ell\geq 0$.
Then there is a unique uniserial module with socle factors $V(0),V(m),V(\ell)$ if $\ell\leq 2m$ and $2m\equiv \ell\mod 4$,
in which case the dual module is uniserial with socle factors $V(\ell),V(m),V(0)$.
Moreover, there is a unique uniserial module with socle factors $V(0),V(m),V(0)$ if and only if $m$ is even. Furthermore,
if $m\equiv 0\mod 4$ there is a parametrization by $\C$
of the isomorphism classes of uniserial modules with the same
socle factors $V(0),V(m),V(m),V(0)$.
\end{lemma}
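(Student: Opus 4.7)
The plan is to read off each of the three claims as a direct specialization of the previously established Lemmas \ref{poq}, \ref{nodeg1}, and Note \ref{ocho3} to the case $\s=\sl(2)$, $\mu=m$, using the fact that every $\sl(2)$-module is self-dual, so $\mu^*=m$. The only quantitative input needed is the Clebsch--Gordan decomposition
\[
V(m)\otimes V(m)=V(2m)\oplus V(2m-2)\oplus\cdots\oplus V(2)\oplus V(0),
\]
together with the refinements
\[
S^2(V(m))=V(2m)\oplus V(2m-4)\oplus\cdots,\qquad \Lambda^2(V(m))=V(2m-2)\oplus V(2m-6)\oplus\cdots,
\]
both of which are recorded just before the statement.

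First I would dispatch the statement about the sequence $V(0),V(m),V(\ell)$. By Lemma \ref{poq} applied with $\la=\ell$, it suffices to check that $V(\ell)$ occurs exactly once in $S^2(V(m))$ (and then automatically in $V(m)\otimes V(m)$, and not at all in $\Lambda^2(V(m))$). Inspecting the $S^2$-decomposition, $V(\ell)$ is a summand iff $\ell\leq 2m$ and $\ell\equiv 2m\pmod 4$, which is the stated hypothesis; moreover every occurrence in $V(m)\otimes V(m)$ is with multiplicity one, so the non-occurrence in $\Lambda^2(V(m))$ is automatic. Lemma \ref{poq} then yields a unique uniserial $\g$-module with socle factors $V(0),V(m),V(\ell)$, and the uniqueness of the reversed-socle dual follows from Lemma \ref{reves}.

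Next I would handle the length-three self-dual case $V(0),V(m),V(0)$ via Lemma \ref{nodeg1}. Since $\mu^*=m$, that lemma gives existence of a uniserial module with these socle factors precisely when the non-zero $\sl(2)$-invariant bilinear form on $V(m)$ is symmetric, equivalently when $V(0)\subset S^2(V(m))$; the decomposition above shows this holds iff $0\equiv 2m\pmod 4$, i.e. iff $m$ is even. Uniqueness (when it exists) is again the conclusion of Lemma \ref{poq} for $\la=0$, whose hypotheses coincide with the ones just verified.

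Finally, for $V(0),V(m),V(m),V(0)$ I would invoke Note \ref{ocho3}, whose hypotheses demand $\mu=\mu^*$ (automatic) and that $V(m)$ appear exactly once in $S^2(V(m))$ and in $V(m)\otimes V(m)$. From the $S^2$-decomposition, $V(m)\subset S^2(V(m))$ iff $m\leq 2m$ (always) and $m\equiv 2m\pmod 4$, i.e. iff $m\equiv 0\pmod 4$; and the multiplicity in $V(m)\otimes V(m)$ is already at most one. Note \ref{ocho3} then supplies a $\C$-parameter family of isomorphism classes with the prescribed socle factors, completing the proof. The argument is essentially bookkeeping: no step is a genuine obstacle, the only care being to distinguish the $S^2$ versus $\Lambda^2$ summands via the $\mod 4$ arithmetic.
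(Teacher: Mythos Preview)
Your proposal is correct and follows essentially the same approach as the paper: the paper's proof is precisely the short paragraph preceding the lemma, which specializes Lemma \ref{poq}, Lemma \ref{nodeg1}, and Note \ref{ocho3} to $\s=\sl(2)$, $\mu=m$ using the Clebsch--Gordan decomposition of $V(m)\otimes V(m)$ into its symmetric and alternating parts, exactly as you do.
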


\begin{example}\label{ex.V(0)V(3)V(2)}
A matrix realization of a uniserial $\sl(2)\ltimes V(3)$-module with socle factors $V(0)$, $V(3)$, $V(2)$ is:
{\scriptsize
\[
\begin{array}{r|rrrr|rrr}
  0 & -v_3 & 3v_2  & -3v_1 & v_0 &   \\
 \hline
        &  3h  & 3e  &  0 &   0  & -3v_1 &  3v_0 &  0    \\
        &   f  & h   & 2e &   0  & -2v_2 &   v_1 & v_0    \\
        &   0  & 2f  & -h &   e  & - v_3 &  -v_2 & 2v_1   \\
        &   0  &  0  & 3f & -3h  &  0    & -3v_3 & 3v_2  \\
 \hline
        &      &     &    &      &  2h   & 2e    &  0    \\
        &      &     &    &      &   f   &  0    &  e        \\
        &      &     &    &      &   0   & 2f    & -2h      \\
\end{array}.
\]
}

\noindent
 The one parameter family,
parametrized by  $z\in\C$,
of non-isomorphic uniserial
$\sl(2)\ltimes V(4)$-modules with socle factors $V(0)$, $V(4)$, $V(4)$, $V(0)$ is given by:
{\scriptsize
\[
\begin{array}{r|rrrrr|rrrrr|r}
0 & v_4 & -4v_3 & 6v_2 & -4v_1 & v_0  &   &   &   &   &   &   \\
 \hline
  & 4h & 4e & 0 & 0 & 0 & 6v_2 & -12v_1 & 6v_0  & 0 & 0 & z\,v_0  \\
  & f & 2h & 3e & 0 & 0 & 3v_3 & -3v_2 & -3v_1 & 3v_0  & 0 & z\,v_1 \\
  & 0 & 2f & 0 & 2e & 0 & v_4 & 2v_3 & -6v_2 & 2v_1 & v_0  & z\,v_2 \\
  & 0 & 0 & 3f & -2h & e & 0 & 3v_4 & -3v_3 & -3v_2 & 3v_1 & z\,v_3 \\
  & 0 & 0 & 0 & 4f & -4h & 0 & 0 & 6v_4 & -12v_3 & 6v_2 & z\,v_4 \\
 \hline
  &   &   &   &   &   & 4h & 4e & 0 & 0 & 0 & v_0  \\
  &   &   &   &   &   & f & 2h & 3e & 0 & 0 & v_1 \\
  &   &   &   &   &   & 0 & 2f & 0 & 2e & 0 & v_2 \\
  &   &   &   &   &   & 0 & 0 & 3f & -2h & e & v_3 \\
  &   &   &   &   &   & 0 & 0 & 0 & 4f & -4h & v_4 \\
 \hline
  &   &   &   &   &   &   &   &   &   &   &  0
\end{array}.
\]
}
\end{example}

Let $\g=\sl(3)\ltimes \C^3$ and let
$\lambda_1$ and $\lambda_2$ be the fundamental weights of $\sl(3)$.
We now show that there exists a unique uniserial $\g$-module
with socle factors
\[
V(2\lambda_2),\;V(\lambda_1+\lambda_2),\;V(2\lambda_1).
\]
Notice that in contrast to the other examples considered so far, none of
the differences between the highest weights of these three $\sl(3)$-modules
is a dominant weight.

Let $\r=\C^3$. According to Propositions \ref{xrite} and \ref{d}, it suffices to prove
that
\[
\Hom_{\sl(3)}(\r,V(\lambda_1+\lambda_2)^*\otimes V(2\lambda_2) ) \ne 0,\;
\Hom_{\sl(3)}(\r,V(2\lambda_1)^*\otimes V(\lambda_1+\lambda_2) ) \ne 0,
\]
and
\[
\Hom_{\sl(3)}(\r,V(2\lambda_1)^*\otimes V(2\lambda_2)) =0=
\Hom_{\sl(3)}(\Lambda^2\r,V(2\lambda_1)^*\otimes V(2\lambda_2)).
\]
Since $\lambda_1^*=\lambda_2$ and
\[
 \r\cong V(\lambda_1),\quad\Lambda^2\r\cong V(\lambda_2),
\]
the above conditions follow from the following tensor product decompositions:
\begin{align*}
 V(\lambda_1+\lambda_2)^*\otimes V(2\lambda_2)
 & =
 V(\lambda_1+\lambda_2)  \otimes V(2\lambda_2)  \\
 & =
 V(\lambda_1+3\lambda_2)\oplus  V(2\lambda_1+\lambda_2)\oplus  V(2\lambda_2) \oplus  V(\lambda_1), \\
 V(2\lambda_1)^*\otimes V(2\lambda_2)
 & =
 V(2\lambda_2)  \otimes V(2\lambda_2) \\
 & =
 V(4\lambda_2)\oplus  V(\lambda_1+2\lambda_2)\oplus  V(2\lambda_1).
\end{align*}

Here is an example with $\s=\so(m)$, $m\geq 3$, and $\mu=\la_1$, the first fundamental weight.
A matrix representation of $\so(m)\ltimes V(\la_1)$ that
is uniserial with socle factors $V(0),V(\la_1),V(0)$ can be obtained as
follows.

Let $U$ be a vector space of
dimension $m$, and let $f:U\times U\to\C$ be a non-degenerate
symmetric bilinear form. The subalgebra of $\gl(U)$ preserving $f$
is $\so(m)$, and $U$ is the natural module for $\so(m)$.

Set $n=m+2$ and let $J$ be the $n\times n$ matrix with 1's along the secondary
diagonal and 0's elsewhere. The $n\times n$ matrices $X$
satisfying $X'J+JX=0$, where $X'$ indicates the transpose of $X$, form $\so(n)$. The appearance of
such $X$ is
$$
X=\left(
    \begin{array}{ccccc}
      a & x_1 & \cdots & x_m & 0 \\
      y_1 & * & * & * & -x_m \\
      \vdots &  \vdots &  \vdots &  \vdots & \vdots \\
      y_m & * & * & * & -x_1 \\
      0 & -y_m & \cdots & -y_1 & -a\\
    \end{array}
  \right)
$$
where the inner $*$ follow the same pattern as the outer entries,
i.e. $X$ is skew-symmetric relative to the secondary diagonal. The
subalgebra of $\so(n)$ formed by all $X$ having 0's in the outer
rows/columns is clearly isomorphic to $\so(m)$. Moreover, the
subspace of all $X$ having 0's in the first column, last row, and all inner entries, is
normalized by $\so(m)$, in such a way that together they form a subalgebra
isomorphic to $\so(m)\ltimes U$. Theorem \ref{fed} ensures that this is a uniserial
representation, whose socle factors are clearly $V(0),U,V(0)$.

In particular when $m=3$ we obtain a uniserial module for
$\sl(2)\ltimes V(2)$ with socle factors $V(0),V(2),V(0)$.
Explicitly, we have the  embedding of $\sl(2)\ltimes V(2)$ into
$\so(5)$
$$\left(
  \begin{array}{ccccc}
    0 & a & b & c & 0 \\
    0 & h & e & 0 & -c \\
    0 & f & 0 & -e & -b \\
    0 & 0 & -f & -h & -a \\
    0 & 0 & 0 & 0 & 0 \\
  \end{array}
\right),$$ which makes $\C^5$ into a uniserial module with socle factors $V(0),V(2),V(0)$.

Thus the isomorphism $\sl(2)\cong\so(3)$ yields a uniserial module for $\sl(2)\ltimes V(2)$ with socle factors $V(0),V(2),V(0)$, where $V(2)$ is the natural module for $\so(3)$.
But when we identify $\sl(2)$ with $\sy(2)$ the invariant form on $V(1)$ is skew-symmetric and
no uniserial module for $\sl(2)\ltimes V(1)$ with socle factors $V(0),V(1),V(0)$ exists, as indicated above.

It is perhaps worth noting that when we pass to a perfect Lie algebra whose radical is nilpotent of class 2,
in addition to all modules arising from the abelian case, we may obtain some new ones as well. As an illustration, let $m\geq 1$,
let $U$ be a vector space of dimension $2m$ and let $f:U\times U\to\C$
be a non-degenerate skew-symmetric bilinear form. The subalgebra of $\gl(U)$ preserving $f$
is $\sy(2m)$ and $U$ is the natural module for $\sy(2m)$. The Heisenberg algebra $\h(2m+1)$ can be defined on
the vector space $U\oplus \C$ by declaring $[u+a,v+b]=f(u,v)$. Then $\sy(2m)$ acts via derivations
on $\h(2m+1)$ by $[x,u+a]=x(u)$, and we may then form the perfect Lie algebra $\sy(2m)\ltimes \h(2m+1)$.
We have a natural Lie epimorphism $\sy(2m)\ltimes \h(2m+1)\to \sy(2m)\ltimes U$, which allows us to view every uniserial module
for $\sy(2m)\ltimes U$ as one for $\sy(2m)\ltimes \h(2m+1)$ in which the center acts trivially.
We wish to construct a uniserial module $V$ for $\sy(2m)\ltimes \h(2m+1)$ with socle factors $V(0),U,V(0)$.
Since the $\sy(2m)$-invariant form on $U$, namely $f$, is skew-symmetric, our earlier comments ensure
that no such module exists for $\sy(2m)\ltimes U$. The reader will also note that the center of
$\sy(2m)\ltimes \h(2m+1)$ does not act trivially on $V$.

Set $n=m+1$ and let $J$ be the $n\times n$ matrix with 1's along the secondary diagonal and 0's everywhere else.
Let $S$ be the $2n\times 2n$ skew-symmetric invertible matrix
$$S=\left(
                     \begin{array}{cc}
                       0 & J \\
                       -J & 0 \\
                     \end{array}
                   \right).
$$
The $2n\times 2n$ matrices $X$ satisfying $X'S+SX=0$ form $\sy(2n)$.
The appearance of such $X$ is
$$
X=\left(
    \begin{array}{cccccccc}
      a &  x_1 & \cdots &  x_m &  y_1 & \cdots & y_m & z \\
      b_1 & * & * & * & * & * & * & y_m \\
      \vdots & * & * & * & * & * & * & \vdots \\
      b_m & * & * & * & * & * & * & y_1 \\
      c_1 & * & * & * & * & * & * & -x_m \\
      \vdots & * & * & * & * & * & * & \vdots \\
      c_m & * & * & * & * & * & * & -x_1 \\
      d & c_m & \cdots & c_1 & -b_m & \cdots & -b_1 & -a \\
    \end{array}
  \right),
$$
where the inner $*$ follow the same pattern as the outer entries. More explicitly,
if we partition $X$ into 4 blocks of size $n\times n$, that is, $X=\left(\begin{array}{cc}
                       A & B\\
                       C & D \\
                     \end{array}
                   \right)$, then $B,C$ are symmetric relative to the secondary diagonal and $D$
                   is the opposite of the transpose of $A$ relative to the secondary diagonal.
In particular, the inner $*$ form
a subalgebra isomorphic to $\sy(2m)$, which together with the first row, and last column,  with $a=0$,
form a subalgebra isomorphic to $\sy(2m)\ltimes \h(2m+1)$. This makes the column space $V=\C^{2n}$ into a uniserial module
for $\sy(2m)\ltimes \h(2m+1)$ with socle factors $V(0),U,V(0)$.

In particular when $m=1$ we get a uniserial module for $\sl(2)\ltimes \h(3)$ whose socle factors are
$V(0),V(1),V(0)$. Explicitly, this is obtained through the following embedding of $\sl(2)\ltimes \h(3)$ into $\sy(4)$:
$$
\left(
  \begin{array}{cccc}
    0 & x & y & z \\
    0 & a & b & y \\
    0 & c & -a & -x \\
    0 & 0 & 0 & 0 \\
  \end{array}
\right).
$$

%%%%%%%%%%%%%%%%%%%%%%%%%%%%%%%%%%%%%%%%%%%%%%%%%%%%%%%%%%%%%%%%%%%%%%%%%%%%%%%%%%%%%%%%%%
\section{Admissible sequences of length 3 for $\g=\sl(2)\ltimes V(m)$ }
\label{sec.length3}
%%%%%%%%%%%%%%%%%%%%%%%%%%%%%%%%%%%%%%%%%%%%%%%%%%%%%%%%%%%%%%%%%%%%%%%%%%%%%%%%%%%%%%%%%%

Let $\g$ be an arbitrary Lie algebra and let $X,Y,Z$ be
$\g$-modules. Then the map $\mathrm{Hom}(Y,X)\otimes
\mathrm{Hom}(Z,Y)\to \mathrm{Hom}(Z,X)$, defined by
$\alpha\otimes\beta\mapsto\alpha\beta$,
%as well as the map $Y^*\otimes X\otimes Z^*\otimes Y\to Z^*\otimes Y$,
%defined by $\gamma\otimes x\otimes\delta\otimes y\mapsto \gamma(y)\delta\otimes x$,
is an epimorphism $\g$-modules.
%A quick verification shows that the diagram
%$$
%\begin{matrix}
%\mathrm{Hom}(Y,X)\otimes \mathrm{Hom}(Z,Y) & \rightarrow & \mathrm{Hom}(Z,X)\\
%\uparrow & & \uparrow\\
%Y^*\otimes X\otimes Z^*\otimes Y & \rightarrow & Z^*\otimes Y\\
%\end{matrix}
%$$
%commutes, where the vertical isomorphisms are canonical.

Let $M,N$ be $\g$-modules and let $f:M\to \mathrm{Hom}(Y,X)$, $g:N\to \mathrm{Hom}(Z,Y)$
be homomorphisms of $\g$-modules. They give rise to the homomorphism of $\g$-modules
\begin{equation}
M\otimes N\stackrel{f\otimes g}{\longrightarrow}\mathrm{Hom}(Y,X)\otimes \mathrm{Hom}(Z,Y)\to \mathrm{Hom}(Z,X).
\label{delone}
\end{equation}
We are interested in the image, say $\mathcal{I}$, of this map. Here is a matrix interpretation.
Let $d_X,d_Y,d_Z$ be the dimensions of $X,Y,Z$ and fix bases $B_X,B_Y,B_Z$ on them. Let $M_X:\g\to \gl(d_X)$,
$M_Y:\g\to \gl(d_Y)$, $M_Z:\g\to \gl(d_Z)$ be the matrix representations associated to the modules $X,Y,Z$
relative to the bases $B_X,B_Y,B_Z$. Consider the $\g$-module $U=X\oplus Y\oplus Z$ of dimension $d=d_X+d_Y+d_Z$
and basis $B=B_X\cup B_Y\cup B_Z$. Then the matrix representation $M:\g\mapsto \gl(d)$ associated to $U$ relative to $B$ is
$$
M(t)=\left(
  \begin{array}{ccc}
    M_X(t) & 0 & 0 \\
    0 & M_Y(t) & 0 \\
    0 & 0 & M_Z(t) \\
  \end{array}
\right),\quad t\in\g.
$$
We view $\gl(d)$ as a $\g$-module by means of $t\cdot A=[M(t),A]$
for $t\in\g$ and $A\in\gl(d)$. Let $M_{a,b}$ stand for the space of all complex matrices of size $a\times b$. Then
$$
\left\{\left(\begin{array}{ccc}
    0 & A & 0 \\
    0 & 0 & 0 \\
    0 & 0 & 0 \\
  \end{array}
\right): A\in M_{d_X,d_Y}\right\}\text{ and
}\left\{\left(\begin{array}{ccc}
    0 & 0 & 0 \\
    0 & 0 & A \\
    0 & 0 & 0 \\
  \end{array}
\right): A\in M_{d_Y,d_Z}\right\}
$$
are $\g$-submodules of $\gl(d)$, respectively isomorphic to $\mathrm{Hom}(Y,X)$ and $\mathrm{Hom}(Z,Y)$. Let
$$
\mathcal{D}=\left\{\left(\begin{array}{ccc}
    0 & f(m) & 0 \\
    0 & 0 & g(n) \\
    0 & 0 & 0 \\
  \end{array}
\right): m\in M, n\in N\right\},
$$
where, by abuse of notation, $f(m)$ and $g(n)$ stand for their own matrices relative to the bases $B_Y, B_X$ and $B_Z,B_Y$.
Then $\mathcal{I}$ is, relative to the bases $B_Z,B_X$, the subspace generated by all matrices
$$
\left\{\left(\begin{array}{ccc}
    0 & 0 & f(m)g(n) \\
    0 & 0 & 0 \\
    0 & 0 & 0 \\
  \end{array}
\right): m\in M, n\in N\right\}.
$$
Note that $\mathcal{A}=\mathcal{D}\oplus \mathcal{I}$
is the associative algebra generated by $\mathcal{D}$.

We are also interested in the case $M=N$. In this case we have the $\g$-module decomposition
$M\otimes M=\Lambda^2(M)\oplus S^2(M)$. Let $\mathcal{J}$ be the the image of $\Lambda^2(M)$ under (\ref{delone}).
Then $\mathcal{J}$ is, relative to the bases $B_Z,B_X$, the subspace generated by all matrices
$$
\left\{\left(\begin{array}{ccc}
    0 & 0 & f(m)g(n)-f(n)g(m) \\
    0 & 0 & 0 \\
    0 & 0 & 0 \\
  \end{array}
\right): m,n\in M\right\}
$$
and, in this case, $\mathcal{L}=\mathcal{D}_{\text{diag}}\oplus \mathcal{J}$
is the Lie algebra generated by
$$
\mathcal{D}_{\text{diag}}=\left\{\left(\begin{array}{ccc}
    0 & f(m) & 0 \\
    0 & 0 & g(m) \\
    0 & 0 & 0 \\
  \end{array}
\right): m\in M\right\}.
$$

Let $a,b,c,p,q$ be non-negative integers. We next focus attention on the case:
$$\g=\sl(2),\; M=V(p),\; N=V(q),\; X=V(a),\; Y=V(b),\; Z=V(c).
$$
We wish to determine the $\g$-module structure of $\mathcal{I}$ (resp. $\mathcal{J}$ when $p=q$).
Since the tensor product of irreducible $\sl(2)$-modules is multiplicity free, $\mathcal{I}$
is independent of the choice of $f$ and $g$ (as long as they are non-zero)
and determining $\mathcal{I}$ is equivalent to finding all $k\geq 0$ such that
$V(k)$ is a submodule of~$\mathcal{I}$.

%Recall $V(k)$ appears in $\Lambda^2(V(p))$ if and only if $0\leq k\leq 2p$ and $2p-2\equiv k\mod 4$.
%Thus, if $p=q$ then $V(k)$ appears in $\mathcal{J}$ if and only if $V(k)$ appears in $\mathcal{I}$,
%$0\leq k\leq 2p$ and $2p-2\equiv k\mod 4$. Hence, it suffices to find the structure of~$\mathcal{I}$. %In this regard,
%if $V(k)$ appears in $\mathcal{I}$
%then $V(k)$ appears in $V(a)\otimes V(c)$ and $V(p)\otimes V(q)$, $V(p)$ appears in $V(a)\otimes V(b)$,
%$V(q)$ appears in $V(b)\otimes V(c)$, and $f,g$ are non-zero.

\begin{theorem}
\label{imp2} Let $a,b,c,p,q,k$ be non-negative integers. Assume the existence of $\sl(2)$-embeddings
$V(p)\to\mathrm{Hom}(V(b),V(a))$ and $V(q)\to\mathrm{Hom}(V(c),V(b))$, and let $\mathcal{I}$ be the image of the corresponding
$\sl(2)$-homomorphism
\begin{equation}
\label{jjota}
V(p)\otimes V(q)\rightarrow\mathrm{Hom}(V(b),V(a))\otimes \mathrm{Hom}(V(c),V(b))\to \mathrm{Hom}(V(c),V(a)).
\end{equation}
Then $V(k)$ appears in $\mathcal{I}$ if and only if the Wigner-Racah $6j$-symbol (see \S\ref{Subsec.Clebsch-Gordan})
\begin{equation}
\label{conjot}
\left\{\begin{matrix}
\frac{q}{2} \; \frac{k}{2} \; \frac{p}{2} \\[1mm]
\frac{a}{2} \; \frac{b}{2} \; \frac{c}{2}
\end{matrix}
\right\}\neq 0.
\end{equation}
\end{theorem}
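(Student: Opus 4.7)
The plan is to reduce the statement to the explicit commutative-diagram description of the $6j$-symbol given by Theorem \ref{thm.6jIntro}. The key observation is that, by the multiplicity-freeness of $\sl(2)$ tensor products, whether $V(k)$ appears in $\mathcal{I}$ is controlled by a single scalar, and that scalar is precisely the $6j$-symbol.

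First I would handle the trivial case. The Clebsch-Gordan rule gives $V(p)\otimes V(q)\cong\bigoplus_{k'} V(k')$ with $|p-q|\leq k'\leq p+q$ and $k'\equiv p+q\pmod{2}$, each summand with multiplicity one. If $k$ does not lie in this range (or has the wrong parity), then $V(k)$ certainly does not appear in $\mathcal{I}$; on the $6j$-side, the classical triangle and parity selection rules force the symbol in \eqref{conjot} to vanish, so the equivalence is trivially true. The same applies if $V(k)$ fails to embed in $\Hom(V(c),V(a))\cong V(a)\otimes V(c)$, since then $f_4=0$ automatically and the corresponding $6j$-symbol vanishes by triangle selection on its lower row. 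Henceforth I assume $V(k)$ enters both $V(p)\otimes V(q)$ and $\Hom(V(c),V(a))$.

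Next, let $\iota:V(k)\hookrightarrow V(p)\otimes V(q)$ be the (unique up to scalar) $\sl(2)$-embedding and denote by $f_2:V(p)\hookrightarrow\Hom(V(b),V(a))$ and $f_3:V(q)\hookrightarrow\Hom(V(c),V(b))$ the given embeddings. Let $g:\Hom(V(b),V(a))\otimes \Hom(V(c),V(b))\to\Hom(V(c),V(a))$ be composition $\alpha\otimes\beta\mapsto\alpha\beta$. Then the $V(k)$-isotypic component of $\mathcal{I}$ is the image of
$$f_4 \;=\; g\circ(f_2\otimes f_3)\circ\iota\colon V(k)\longrightarrow \Hom(V(c),V(a)).$$
By Schur's lemma and the multiplicity-freeness of $\Hom(V(c),V(a))$, the map $f_4$ is a scalar multiple of the canonical embedding $f_0\colon V(k)\hookrightarrow\Hom(V(c),V(a))$, so $V(k)$ appears in $\mathcal{I}$ if and only if this scalar is nonzero.

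Finally, I would invoke Theorem \ref{thm.6jIntro}, which is precisely the assertion that after a suitable normalization of $f_0,f_1,f_2,f_3$ (where $f_1=\iota$), one has $f_4=\left\{\begin{smallmatrix}q/2\,\,k/2\,\,p/2\\ a/2\,\,b/2\,\,c/2\end{smallmatrix}\right\}f_0$. The equivalence in the theorem follows immediately. The hard part is thus Theorem \ref{thm.6jIntro} itself, whose proof requires the lengthy computation of \S\ref{cal8}: one must match the classical definition of the $6j$-symbol as the transition matrix between the two bases of $\Hom_{\sl(2)}\bigl(V(k),V(a)\otimes V(b)\otimes V(c)\bigr)$ coming from the two ways of bracketing the threefold tensor product, with the composition/contraction description implicit in \eqref{jjota}. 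This is where all the Clebsch-Gordan normalization constants and the sign conventions of the self-duality isomorphisms $V(n)^*\cong V(n)$ must be tracked explicitly, and it is the step that cannot be shortcut conceptually.
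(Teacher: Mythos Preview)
Your proposal is correct and follows essentially the same route as the paper's proof: handle the degenerate triangle cases first, then use Schur's lemma and multiplicity-freeness to reduce the question to the (non)vanishing of a single scalar, and finally identify that scalar with a nonzero multiple of the $6j$-symbol via the computation in \S\ref{cal8}. The only cosmetic difference is that the paper first passes from the $\Hom$ model to the tensor model $V(a)\otimes V(c)$ via the self-duality $j_b:V(b)\to V(b)^*$ and works there, whereas you stay in the $\Hom$ picture and package the key identity as an appeal to Theorem~\ref{thm.6jIntro}; since Theorem~\ref{thm.6jIntro} is itself established by (and is essentially a restatement of) the computation in \S\ref{cal8} culminating in Theorem~\ref{Thm.Scalar for phi}, your acknowledgment that this is where the real work lies keeps the argument non-circular.
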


\begin{proof} Let $V(p)\to V(a)\otimes V(b)$ and  $V(q)\to V(b)\otimes V(c)$ be $\sl(2)$-embeddings,
and let $j:V(b)\to V(b)^*$ be an $\sl(2)$-isomorphism. Let $\mathcal{K}$ be the image of the corresponding $\sl(2)$-homomorphism
$$
V(p)\otimes V(q)\rightarrow  V(a)\otimes V(b)\otimes  V(b)\otimes V(c)\stackrel{\mu}{\rightarrow}  V(a)\otimes V(c),
$$
where
\begin{equation}
\label{estr1}
\mu(x\otimes y_1\otimes y_2\otimes z)=(j(y_1))(y_2)x\otimes z,
\quad x\in V(a),\; y_1,y_2\in V(b),\;  z\in V(c).
\end{equation}
It is not difficult to see that $\mathcal{K}$ is independent of the choice of $j$ and the given embeddings, and that $\mathcal{I}\cong \mathcal{K}$.
Hence, it suffices to prove the result for $\mathcal{K}$.

If $V(k)$ does not occur in $V(p)\otimes V(q)$ or  $V(a)\otimes
V(c)$ then $V(k)$ does not occur in $\mathcal{K}$ and, moreover,
the left hand side of (\ref{conjot}) is 0. Thus, we may assume
that $V(k)$ appears in $V(p)\otimes V(q)$ and $V(a)\otimes V(c)$.
In \S\ref{cal8} we furnish a concrete embedding
$\iota_r^{s,t}:V(r)\to V(s)\otimes V(t)$ for any non-negative
integers such that $|t-s|\leq r\leq t+s$ and $t+s\equiv r\mod 2$,
as well as a fixed isomorphism $j_r:V(r)\to V(r)^*$ for any $r\geq
0$. These data  yield a specific $\sl(2)$-homomorphism
$\phi:V(k)\to V(a)\otimes V(c)$
\begin{equation}
\label{estr2}
 V(k)\overset{\iota_k^{p,q}}{\longrightarrow}V(p)\otimes V(q)\overset{\iota_p^{a,b}\otimes\iota_q^{b,c}}
 {\longrightarrow}V(a)\otimes V(b)\otimes V(b)\otimes V(c)\overset{\mu}{\longrightarrow} V(a)\otimes V(c).
\end{equation}
But, up to scaling, the only $\sl(2)$-homomorphism $V(k)\to V(a)\otimes V(c)$ is $\iota_k^{a,c}$. Hence
$$
\phi=\lambda\iota_k^{a,c}
$$
for a unique scalar $\lambda$. A long and technical calculation (performed independently in \S\ref{cal8}) shows that
$$
\lambda=C\;
\left\{\begin{matrix}
\frac{q}{2} \; \frac{k}{2} \; \frac{p}{2} \\[1mm]
\frac{a}{2} \; \frac{b}{2} \; \frac{c}{2}
\end{matrix}
\right\},
$$
where $C$ is a non-zero scalar explicitly defined in \S\ref{cal8}. The result now follows.
\end{proof}

\begin{cor}
\label{cvu}
 Keep the hypothesis of Theorem \ref{imp2} and suppose that $p=q$.  Let $\mathcal{J}$
 be the image of $\Lambda^2(V(p))$ under (\ref{jjota}).
Then $V(k)$ appears in $\mathcal{J}$ if and only if
$k \equiv 2p-2\mod 4$ and (\ref{conjot}) holds.
\end{cor}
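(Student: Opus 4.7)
The plan is to reduce the corollary directly to Theorem~\ref{imp2} by exploiting the multiplicity-freeness of $V(p)\otimes V(p)$.

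First I would identify precisely which irreducible $\sl(2)$-summands of $V(p)\otimes V(p)$ lie in $\Lambda^2(V(p))$. By Clebsch-Gordan, $V(p)\otimes V(p)=\bigoplus_{i=0}^{p}V(2p-2i)$, and comparing with the standard decompositions $S^2(V(p))=V(2p)\oplus V(2p-4)\oplus\cdots$ and $\Lambda^2(V(p))=V(2p-2)\oplus V(2p-6)\oplus\cdots$ (as was already recalled in~\S\ref{sec.Other Uniserials}), one sees that $V(k)$ occurs as a summand of $\Lambda^2(V(p))$ if and only if $k\equiv 2p-2\pmod 4$ (with $0\le k\le 2p-2$).

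Next I would observe that because $V(p)\otimes V(p)$ is multiplicity-free, Schur's lemma forces any $\sl(2)$-homomorphism out of it to act, on each irreducible isotypic component, either as zero or as an embedding. Applied to the map~(\ref{jjota}), this means the image $\mathcal{I}$ splits as $\mathcal{I}=\mathcal{J}\oplus\mathcal{J}'$, where $\mathcal{J}'$ is the image of $S^2(V(p))$. In particular, $V(k)\subset\mathcal{J}$ if and only if simultaneously $V(k)\subset\Lambda^2(V(p))$ and $V(k)\subset\mathcal{I}$.

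Combining these two steps with Theorem~\ref{imp2}, which characterizes $V(k)\subset\mathcal{I}$ as non-vanishing of the $6j$-symbol~(\ref{conjot}), yields the corollary. I do not anticipate any serious obstacle: the whole argument is essentially bookkeeping on parities of summands in the tensor square, with the substantive input already packaged in Theorem~\ref{imp2}. The only minor subtlety to verify is that when $k>2p$ the $6j$-symbol automatically vanishes by the triangle inequalities, so the upper bound $k\le 2p-2$ needed for $V(k)\subset\Lambda^2(V(p))$ is subsumed by the $6j$-hypothesis and does not need to appear explicitly in the statement.
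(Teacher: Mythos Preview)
Your proposal is correct and follows essentially the same approach as the paper's proof. The paper condenses the argument to a single sentence---``$V(k)$ appears in $\mathcal{J}$ if and only if $V(k)$ appears in $\mathcal{I}$ and $\Lambda^2(V(p))$''---and you have simply unpacked the justification for this via multiplicity-freeness, together with the observation that the bound $k\leq 2p$ is absorbed by the triangle conditions built into the $6j$-symbol.
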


\begin{proof} Clearly $V(k)$ appears in $\mathcal{J}$ if and only if $V(k)$ appears in $\mathcal{I}$ and $\Lambda^2(V(p))$. Recalling that
$V(k)$ appears in $\Lambda^2(V(p))$ if and only if $0\leq k\leq 2p$ and $2p-2\equiv k\mod 4$, the result follows from Theorem \ref{imp2}.
\end{proof}

\begin{example}\label{ex.6j=0}
Let us consider the $\sl(2)$-homomorphism
\[
V(4)\otimes V(4)\rightarrow\mathrm{Hom}(V(6),V(4))\otimes \mathrm{Hom}(V(4),V(6))\to \mathrm{Hom}(V(4),V(4)),
\]
i.e. $p=q=a=c=4$ and $b=6$.
It turns out that
\[
\begin{matrix}
 \left\{\begin{matrix}
\frac{4}{2} \; \frac{0}{2} \; \frac{4}{2} \\[1mm]
\frac{4}{2} \; \frac{6}{2} \; \frac{4}{2}
\end{matrix}
\right\}\!=\!-\frac15,&
\left\{\begin{matrix}
\frac{4}{2} \; \frac{2}{2} \; \frac{4}{2} \\[1mm]
\frac{4}{2} \; \frac{6}{2} \; \frac{4}{2}
\end{matrix}
\right\}\!=\!0,&
\left\{\begin{matrix}
\frac{4}{2} \; \frac{4}{2} \; \frac{4}{2} \\[1mm]
\frac{4}{2} \; \frac{6}{2} \; \frac{4}{2}
\end{matrix}
\right\}\!=\!\frac{4}{35}, &
\left\{\begin{matrix}
\frac{4}{2} \; \frac{6}{2} \; \frac{4}{2} \\[1mm]
\frac{4}{2} \; \frac{6}{2} \; \frac{4}{2}
\end{matrix}
\right\}\!=\!\frac{1}{14},&
\left\{\begin{matrix}
\frac{4}{2} \; \frac{8}{2} \; \frac{4}{2} \\[1mm]
\frac{4}{2} \; \frac{6}{2} \; \frac{4}{2}
\end{matrix}
\right\}\!=\!\frac{1}{70}
\end{matrix}.
\]
This shows that $\mathcal{I}=V(0)\oplus V(4)\oplus V(6)\oplus V(8)$ and thus
$\mathcal{J}=V(6)$.
Similarly, it can be shown that for $\sl(2)$-homomorphism
\[
V(4)\otimes V(4)\rightarrow\mathrm{Hom}(V(2),V(4))\otimes \mathrm{Hom}(V(4),V(2))\to
\mathrm{Hom}(V(4),V(4)),
\]
we have
$\mathcal{I}=V(0)\oplus V(2)\oplus V(4)\oplus V(8)$ and thus
$\mathcal{J}=V(2)$.
\end{example}

\begin{definition}\label{def.triangle}
  Given three non-negative integers $a$, $b$ and $c$, we  will say that the triple $(a,b,c)$
  satisfies the \emph{triangle condition} if $a$, $b$ and $c$ are the side lengths
of a (possibly degenerate) triangle and $a+b+c$ is even.
\end{definition}

From the Clebsch-Gordan formula for the decomposition of the tensor product
of two $\sl(2)$-modules, we know that
$V(k)$ is a submodule of $V(a)\otimes V(b)$
if and only if
$|a-b|\le k \le a+b$ and $k\equiv a+b\mod 2$.
It is clear that this is the same as saying that
$(a,b,k)$ satisfies the triangle condition.

In terms of Theorem \ref{imp2}, it is clear that a necessary condition for $V(k)$ to appear in the image $\mathcal{I}$ of (\ref{jjota}) is that the four triples
\begin{equation}\label{eq.triangle}
(k,p,q),\quad (p,a,b),\quad (q,b,c),\quad (k,a,c)
\end{equation}
satisfy the triangle condition. These four triangle conditions can be depicted by the following labeled tetrahedron:
\setlength{\unitlength}{10pt}
\begin{center}
\begin{picture}(10,7)(0,-2)
 \put(0,0){\line(1,2){2.1}}
 \put(0,0){\line(2,-1){3}}
\put(3,-1.5){\line(1,1){2.4}}
\put(3.05,-1.5){\line(-1,6){.95}}
 \put(2.1,4.2){\line(1,-1){3.3}}
 \multiput(0,0)(1.45,.24){4}{\line(6,1){1}}
 \put(3.4,3.0){$a$}
 \put(0.7,2.7){$c$}
 \put(2.6,2.0){$b$}
 \put(4.5,-0.5){$p$}
 \put(0.6,-1.2){$q$}
 \put(1.6,0.5){$k$}
\end{picture}
\end{center}
We point out, however, that the above four triangle conditions do not
imply the existence of an euclidean metric tetrahedron
with side lengths $a$, $b$, $c$, $p$, $q$ and $k$ (as indicated in the above picture);
it is known that an additional condition on the Cayley-Menger determinant is required for that
(see, \cite{Bl}, \cite{GV} or \cite{WD}).

Note also that the four triangle conditions \eqref{eq.triangle}
are not sufficient for $V(k)$ to appear in the image of
(\ref{jjota}), as shown in Example \ref{ex.6j=0}. According to
Theorem~\ref{imp2}, $V(k)$ will not appear in the image of
(\ref{jjota}) if and only if $ \left\{\begin{matrix}
\frac{q}{2} \; \frac{k}{2} \; \frac{p}{2} \\[1mm]
\frac{a}{2} \; \frac{b}{2} \; \frac{c}{2}
\end{matrix}
\right\}=0.
$
We recall that finding the non-trivial zeros of the $6j$-symbol
is a well studied and very difficult problem (see, for instance,
\cite{L}, \cite{R} and the references therein, or more recently \cite{ZR}).
In particular it is known that
\[
 \left\{\begin{matrix}
a \; a-1 \; a \\[1mm]
a \; a+1 \; 2
\end{matrix}
\right\}
=0\quad\text{ and }\quad
 \left\{\begin{matrix}
\frac{j}{2} & \frac{2j-2}{2} &\frac{j}{2} \\[1mm]
\frac{3j-8}{2} & \frac{2j-6}{2} & \frac{j}{2}
\end{matrix}
\right\}
=0
\]
for all integers $a\ge2$ and $j\ge4$ (see equations (4.14) and
(4.15) in \cite{L}).

\medskip

We can now state the following theorem which is important for the classification of the uniserial modules of
the Lie algebra $\sl(2)\ltimes V(m)$.

\begin{theorem}\label{thm:contition}
Let $a$, $b$, $c$ and $m$ be non-negative integers such that $(a,b,m)$
and $(b,c,m)$ satisfy the triangle condition, and
let $\mathcal{J}$ be the image of $\Lambda^2(V(m))$ under
(\ref{jjota}) when $p=m=q$.

Let $f:V(m)\to\mathrm{Hom}(V(b),V(a))$ and
$g:V(m)\to\mathrm{Hom}(V(c),V(b))$ be non-zero
$\sl(2)$-homomorphisms and  consider the Lie
subalgebra, say $\mathcal{L}$, of $\gl(V)$, where $V=V(a)\oplus
V(b)\oplus V(c)$, generated by $f(r)+g(r)$, $r\in V(m)$.

Then the following conditions are equivalent:

\begin{enumerate}[(1)]
\item $\mathcal{L}$ is abelian.
\item $\mathcal{J}=0$.
\item
$
\left\{\begin{matrix}
\frac{m}{2} \; \frac{k}{2} \; \frac{m}{2} \\[1mm]
\frac{a}{2} \; \frac{b}{2} \; \frac{c}{2}
\end{matrix}
\right\}= 0
$ for all non-negative integers $k$ satisfying $k\equiv 2m-2 \mod 4 $.

\item Up to a swap of $a$ and $c$ we have: $c=0$, $b=m$,
$a\equiv 2m \mod 4$ and $a\leq 2m$; or $b=c+m$ and $a=c+2m$.
\item $\Lambda^2(V(m))$ is disjoint from $\mathrm{Hom}(V(c),V(a))$.
\item There is a uniserial $\sl(2)\ltimes V(m)$-module with socle factors $V(a),V(b),V(c)$.
\end{enumerate}
\end{theorem}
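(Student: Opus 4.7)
The plan is to circle through the six conditions in a convenient order: $(6) \Leftrightarrow (1) \Leftrightarrow (2) \Leftrightarrow (5)$, then $(2) \Leftrightarrow (3)$, and finally the hardest step $(3) \Leftrightarrow (4)$.

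First, $(6) \Leftrightarrow (1)$ is essentially Proposition \ref{xrite}(b) specialized to $\s = \sl(2)$, $\r = V(m)$ and to the sequence $V(a),V(b),V(c)$: the triangle conditions $(a,b,m)$, $(b,c,m)$ are precisely the non-disjointness of $V(m)$ with $V(a)\otimes V(b)^*$ and $V(b)\otimes V(c)^*$, so the only remaining obstruction to admissibility is that the Lie algebra generated by the $f(r)+g(r)$ be abelian. Next, the decomposition $\mathcal{L} = \mathcal{D}_{\text{diag}} \oplus \mathcal{J}$ recorded in \S\ref{sec.length3} shows $(1) \Leftrightarrow (2)$, because a bracket in $\mathcal{L}$ is forced into the $\mathcal{J}$ component and conversely every element of $\mathcal{J}$ appears as such a bracket. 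For $(2) \Leftrightarrow (5)$: the map $\Lambda^2(V(m)) \to \mathrm{Hom}(V(c),V(a))$ under consideration is an $\sl(2)$-morphism, and since $\mathrm{Hom}(V(c),V(a)) \cong V(c)^*\otimes V(a)$ is multiplicity-free, its image $\mathcal{J}$ vanishes if and only if $\Lambda^2(V(m))$ and $\mathrm{Hom}(V(c),V(a))$ share no common irreducible constituent.

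The equivalence $(2) \Leftrightarrow (3)$ is a direct application of Corollary \ref{cvu} with $p=q=m$: $V(k)$ appears in $\mathcal{J}$ iff $k \equiv 2m-2 \pmod 4$ together with the $6j$-symbol in (\ref{conjot}) being non-zero. Hence $\mathcal{J}=0$ iff all those $6j$-symbols vanish for all admissible $k$ in the relevant residue class.

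The main obstacle is $(3) \Leftrightarrow (4)$: I need to determine all quadruples $(a,b,c,m)$ of non-negative integers, subject to the triangle conditions on $(a,b,m)$ and $(b,c,m)$, for which
\[
\left\{\begin{matrix}
\frac{m}{2} \; \frac{k}{2} \; \frac{m}{2} \\[1mm]
\frac{a}{2} \; \frac{b}{2} \; \frac{c}{2}
\end{matrix}\right\} = 0
\quad\text{for every } k \ge 0,\ k \equiv 2m-2 \pmod 4.
\]
The strategy is to reduce this to condition (5), which is far more tractable: by Clebsch--Gordan, $\Lambda^2(V(m)) = V(2m-2)\oplus V(2m-6)\oplus\cdots$ and $\mathrm{Hom}(V(c),V(a)) = V(a+c)\oplus V(a+c-2)\oplus\cdots\oplus V(|a-c|)$, so disjointness amounts to an elementary arithmetic statement about the intersection of these two sets of weights together with the triangle constraints. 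Direct case analysis (splitting according to the relative sizes of $a$, $c$, and $m$, and using that $2m-2\equiv -2 \pmod 4$ etc.) will show that disjointness forces exactly the two families in (4). In one direction this produces $(3) \Rightarrow (4)$ via $(3)\Rightarrow(2)\Rightarrow(5)\Rightarrow(4)$. For the converse $(4) \Rightarrow (3)$, in each of the two listed configurations I will verify directly that every $V(k)$ with $k \equiv 2m-2 \pmod 4$ and $(k,m,m)$, $(k,a,c)$ both triangular lies outside one of the two summand sets, so $V(k)$ cannot appear in $\mathcal{J}$, hence the corresponding $6j$-symbol vanishes. This bookkeeping, combined with the already-established chain of equivalences, closes the loop.

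The only delicate bit is making sure that the arithmetic enumeration in the $(2)\Leftrightarrow(5)\Leftrightarrow(4)$ passage is exhaustive; in particular one must remember that condition (4) is stated up to swapping $a$ and $c$, reflecting the obvious symmetry of $\mathcal{J}$ under dualization and reversal of the sequence. No hard analytic input about the $6j$-symbol is needed for the theorem itself, which is pleasant: the deep $6j$-symbol content is encapsulated in Theorem \ref{imp2} and Corollary \ref{cvu}, while the theorem at hand reduces to $\sl(2)$ weight counting once those tools are in place.
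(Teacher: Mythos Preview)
Your argument for $(2)\Rightarrow(5)$ is wrong, and this is the load-bearing step of your whole plan. You claim that because $\mathrm{Hom}(V(c),V(a))$ is multiplicity-free, the $\sl(2)$-map $\Lambda^2(V(m))\to\mathrm{Hom}(V(c),V(a))$ is zero if and only if source and target share no irreducible constituent. That inference is false: an $\sl(2)$-morphism between two multiplicity-free modules can perfectly well kill a common $V(k)$-summand. The scalar by which such a map acts on the $V(k)$-piece is (up to a nonzero constant) exactly the $6j$-symbol in question; Example~\ref{ex.6j=0} gives an explicit instance where $V(2)$ lies in both $V(4)\otimes V(4)$ and $\mathrm{Hom}(V(4),V(4))$ yet is annihilated by the map because the relevant $6j$-symbol vanishes. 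So ``$\mathcal J=0$'' does not formally imply disjointness, and your chain $(3)\Rightarrow(2)\Rightarrow(5)\Rightarrow(4)$ breaks at the middle link.

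This is not a cosmetic gap. The implication $(3)\Rightarrow(4)$ is exactly where the paper spends its effort, and it does require nontrivial input about $6j$-symbols: first the non-vanishing in the degenerate-triangle case (property~(iii) in \S\ref{Subsec.Clebsch-Gordan}) to force $a-c\equiv 2m\bmod 4$, and then Lemma~\ref{lemma.non-zero}, which uses the Biedenharn--Elliott three-term recurrence, to rule out the remaining possibilities when $c\ge 1$ and $a-c<2m$. Your assertion that ``no hard analytic input about the $6j$-symbol is needed for the theorem itself'' is therefore incorrect. Your combinatorial treatment of $(4)\Leftrightarrow(5)$ is fine, and the easy equivalences $(1)\Leftrightarrow(2)$, $(2)\Leftrightarrow(3)$, $(5)\Rightarrow(1)$, $(6)\Leftrightarrow(1)$ are as you say; but to close the cycle you must supply a genuine argument for $(3)\Rightarrow(4)$ (equivalently $(2)\Rightarrow(5)$), and that argument has to explain why those particular $6j$-symbols cannot all vanish unless the combinatorics of (4) hold.
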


\begin{proof} (1)$\Rightarrow$(2) We already noted that
$\mathcal{L}=\mathcal{D}_{\text{diag}}\oplus \mathcal{J}$
is generated by $\mathcal{D}_{\text{diag}}$, so $\mathcal{J}=0$
if $\mathcal{L}$ is abelian.

(2)$\Rightarrow$(3)
If  $0\leq k\leq 2m$ and
$(a,c,k)$ satisfies the triangle condition then
(2) and Corollary \ref{cvu} imply that
$
\left\{\begin{matrix}
\frac{m}{2} \; \frac{k}{2} \; \frac{m}{2} \\[1mm]
\frac{a}{2} \; \frac{b}{2} \; \frac{c}{2}
\end{matrix}
\right\}= 0
$.
For all other $k\equiv 2m-2 \mod 4$, this $6j$-symbol is zero by definition.

(3)$\Rightarrow$(4) As the $6j$-symbol is invariant under the
permutation of any two columns (\cite{CFS}), we may assume that $a\geq c$.

Since $(a,b,m)$ and $(b,c,m)$ satisfy the triangle
condition it follows that $m\ge a-b$, $m\ge b-c$ and thus $2m\ge
a-c$.

We claim that
\begin{equation}\label{eq.equiv4}
  a-c\equiv 2m \mod 4.
\end{equation}
Otherwise, $k=a-c\equiv 2m-2 \mod 4 $ and, since $(a,c,k)$ is a degenerate triangle,
it follows from Property (iii) in the list of properties of
the $6j$-symbols in \S\ref{Subsec.Clebsch-Gordan}, that
$\left\{\begin{matrix}
\frac{m}{2} \; \frac{k}{2} \; \frac{m}{2} \\[1mm]
\frac{a}{2} \; \frac{b}{2} \; \frac{c}{2}
\end{matrix}
\right\}\ne0$
which contradicts (3).

If $a-c=2m$, the triangle conditions on $(a,b,m)$ and $(b,c,m)$
imply that $b=m+c$ and $a=c+2m$.
Otherwise, $a-c<2m$ and we will prove that $c=0$ and thus $b=m$,
$a\equiv 2m \mod 4$ and $a\leq 2m$.
Since $a-c<2m$, it follows from \eqref{eq.equiv4}
that $a-c\le 2m-4$. If $c\ge1$ then
\[
(h,m,m),\quad (h,a,c),\quad (m,a,b),\quad (m,b,c)
\]
satisfy the triangle condition for $h=a-c$ and $h=a-c+2$ and we
are in a position to apply Lemma \ref{lemma.non-zero} to
\[
  j_1=\frac{a-c}2,\quad j_2=j_3=\frac{m}2,\quad j_4=\frac{b}2,\quad
 j_5=\frac{c}2,\quad  j_6=\frac{a}2.
\]
We obtain that
$\left\{\begin{matrix}
\frac{m}{2} \; \frac{k}{2} \; \frac{m}{2} \\[1mm]
\frac{a}{2} \; \frac{b}{2} \; \frac{c}{2}
\end{matrix}
\right\}
=
\left\{\begin{matrix}
\frac{k}{2} \; \frac{m}{2} \; \frac{m}{2}  \\[1mm]
\frac{b}{2} \; \; \frac{c}{2} \; \; \frac{a}{2}
\end{matrix}
\right\}\ne0$
for either $k=a-c+2$ or $k=a-c+6$.
Since both $k$ satisfy $k\equiv 2m-2 \mod 4 $,
this contradicts (3).

(4)$\Rightarrow$(5) Immediate from the decompositions of
$\Lambda^2(V(m))$ and $V(a)\otimes V(c)$.

(5)$\Rightarrow$(1)  This is obvious.

(6) $\!\!\Leftarrow$ $\!\!\!\!\Rightarrow$(1) If $m\geq 1$ this
follows from Proposition \ref{xrite}. If $m=0$ then $a=b=c$ and it
is easy to see that conditions (1) and (6) are both true.
\end{proof}

%%%%%%%%%%%%%%%%%%%%%%%%%%%%%%%%%%%%%%%%%%%%%%%%%%%%%%%%%%%%%%%%%%%%%%%%%%%%%%%%%%
\section{Classification of uniserial $\sl(2)\ltimes V(m)$-modules}
\label{sec.Classification}
%%%%%%%%%%%%%%%%%%%%%%%%%%%%%%%%%%%%%%%%%%%%%%%%%%%%%%%%%%%%%%%%%%%%%%%%%%%%%%%%%%

We are finally in a position to show that, except for a few
exceptions of  lengths 2, 3 and 4, every uniserial
$\sl(2)\ltimes V(m)$-module is isomorphic to $Z(\ell,b)$ or its dual.

\begin{theorem}
Let $\g=\sl(2)\ltimes V(m)$, where $m\geq 1$.
Then, up to a reversing of the order, the following are the only
admissible sequences for $\g$:

\noindent
\begin{tabular}{ll}
Length 1. & $V(a)$. \\[2mm]
Length 2. & $V(a),V(b)$, where $a+b\equiv m\mod 2$ and $0\le b-a\leq m\leq a+b$. \\[2mm]
Length 3. & $V(a),V(a+m),V(a+2m)$; or \\[1mm]
      & $V(0),V(m),V(c)$, where $c\equiv 2m \mod 4$ and $c\leq 2m$. \\[2mm]
Length 4. & $V(a),V(a+m),V(a+2m),V(a+3m)$; or \\[1mm]
      & $V(0),V(m),V(m),V(0)$, where $m\equiv 0\mod 4$. \\[2mm]
Length $\geq 5$. &  $V(a),V(a+m),\dots,V(a+s m)$, where $s\geq 4$. \\[2mm]
\end{tabular}

\noindent
Moreover, each of these sequences arises from only one
isomorphism class of uniserial $\g$-modules, except
for the sequence $V(0),V(m),V(m),V(0)$, $m\equiv 0\mod 4$. The isomorphism classes of uniserial $\g$-modules associated to
this sequence are parametrized by the complex numbers, as described in Note \ref{ocho3}.
\end{theorem}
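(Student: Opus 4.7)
The plan is to reduce the classification at every length $n\geq 4$ to the length-3 case handled by Theorem \ref{thm:contition}, via the local-to-global criterion of Proposition \ref{xrite}(b). Since $V(m)$ is irreducible and embeds into each $\Hom(V(a_{i+1}),V(a_i))$ with multiplicity at most one, every non-zero $\sl(2)$-homomorphism $f_{i+1,i}$ is unique up to scaling, and the vanishing condition (\ref{nma}) is homogeneous in each of the pair $(f_{i+1,i},f_{i+2,i+1})$. Consequently a sequence $V(a_1),\dots,V(a_n)$ is admissible if and only if every consecutive triple $V(a_i),V(a_{i+1}),V(a_{i+2})$ is admissible.

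Lengths $1$ and $2$ are immediate from Note \ref{lendos} and the Clebsch-Gordan rule, while length $3$ is precisely Theorem \ref{thm:contition}. Reading condition~(4) of that theorem in both orders lists the admissible ordered triples in exactly four families: forward APs $(a,a+m,a+2m)$; reverse APs $(a+2m,a+m,a)$; exceptional forward $(0,m,c)$; and exceptional backward $(c,m,0)$, where $c\equiv 2m\mod 4$ and $0\le c\le 2m$ (with the boundary $c=2m$ coinciding with the endpoint APs).

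For $n\geq 4$ I would run a finite case analysis on how two overlapping admissible triples can fit together. If the first triple is a forward AP then the only admissible continuation is again a forward AP, since the exceptional forms would force some $a_i<0$ and the reverse-AP continuation would force $m=0$; the reverse-AP case is symmetric. If the first triple is a genuinely exceptional $(0,m,c)$ with $c\neq 2m$, the unique admissible continuation forces $c=m$ and $a_4=0$, which demands $m\equiv 0\mod 4$ and produces $(0,m,m,0)$; a genuinely exceptional backward triple admits no admissible extension on the right at all. Finally, extending $(0,m,m,0)$ to length $5$ would require $(m,0,a_5)$ to be admissible, but this matches none of the four forms when $m\geq 1$, so the exceptional length-$4$ sequence does not extend. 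This yields exactly the list stated.

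For the uniqueness and parametrization clauses, forward and reverse APs realize $Z(a,s)$ and $Z(a,s)^*$, unique by Theorem \ref{constr} and Corollary \ref{dual}; the length-$3$ exceptional modules are unique by Lemma \ref{poq}; length $2$ is unique by Note \ref{lendos}; and the $\C$-parametrization of uniserial modules with socle factors $V(0),V(m),V(m),V(0)$ for $m\equiv 0\mod 4$ is exactly the phenomenon described in Note \ref{ocho3}, where after fixing the first-superdiagonal blocks and clearing the $(1,3)$ block, the $(2,4)$ block remains a free scalar. I expect the most delicate point to be running the overlap case analysis cleanly: the four admissible triple-families are not disjoint (AP and exceptional coincide at $c=2m$), so one must take care not to miss any genuinely mixed extension or accidentally collapse the degenerate case $(0,m,m,0)$ with $m\equiv 0\mod 4$ into one of its neighbours.
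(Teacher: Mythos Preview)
Your proposal is correct and follows essentially the same approach as the paper: reduce to consecutive length-$3$ subsequences, invoke Theorem~\ref{thm:contition}, and then case-analyze how admissible triples can overlap. The only minor difference is that the paper justifies the reduction by noting that submodules and quotients of uniserial modules are uniserial (for necessity) and by citing the explicit constructions of \S\ref{hist} and \S\ref{sec.Other Uniserials} (for existence), whereas you obtain the two-way equivalence ``the full sequence is admissible iff every consecutive triple is'' directly from Proposition~\ref{xrite}(b) together with the multiplicity-one embedding of $V(m)$---a slightly cleaner route to the same conclusion.
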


\begin{proof}  That the stated sequences are admissible is proven in \S\ref{hist} and \S\ref{sec.Other Uniserials},
while the uniqueness, up to isomorphism, of the uniserial modules arising from such sequences follows from Proposition \ref{d},
except for the sequence $V(0),V(m),V(m),V(0)$, where $m\equiv 0\mod 4$, which is handled in Note \ref{ocho3}.

It remains to prove that, up to a reversing of the order, the only
admissible sequences are as indicated. Those of length 2 are
considered in Note \ref{lendos}.  Let $V(a),V(b),V(c)$ be an
admissible sequence of length 3, which is condition (1) of Theorem
\ref{thm:contition}. This is equivalent to condition (4) of
Theorem \ref{thm:contition}, so all admissible sequences of length
3 are as stated. Next let $V(a_1),\dots,V(a_n)$ be an admissible
sequence of length $n\geq 4$. By Lemma \ref{reves} we may assume
that $a_1\leq a_n$. Since any submodule or quotient of a uniserial
module is also uniserial, we see that
$V(a_{i-1}),V(a_i),V(a_{i+1})$ is also admissible for any $1<i<n$.
Applying this fact in combination with our determination of all
admissible sequences of length 3, we deduce the following:
$a_i\neq 0$ for $1<i<n$; either $a_1,\dots,a_n$ is strictly
increasing, in which case it does so by a fixed increment of $m$,
or else $n=4$ and $a_1=0,a_2=m,a_3=m,a_4=0$.
\end{proof}

%%%%%%%%%%%%%%%%%%%%%%%%%%%%%%%%%%%%%%%%%%%%%%%%%%%%%%%%%%%%%%%%%%%%%%%%%%%%%%%%%%

\section{A new interpretation of the Wigner-Racah $6j$-symbol and
the calculation of the scalar $\lambda$}
\label{cal8}
%%%%%%%%%%%%%%%%%%%%%%%%%%%%%%%%%%%%%%%%%%%%%%%%%%%%%%%%%%%%%%%%%%%%%%%%%%%%%%%%%%

Let $k$ be a non-negative integer and let $e_k$ denote a  highest weight vector
of the irreducible $\sl(2)$-module $V(k)$ of highest weight $k$.
If $\{H,E,F\}$ is the standard basis of
 $\sl(2)$ then
$\mathcal{B}_k=\{F^re_k:r=0,\dots,k\}$ is a basis of  $V(k)$ and
\begin{align*}
HF^re_k &= (k-2r)\,F^re_k,\\
EF^r e_k &= r(k+1-r)\,F^{r-1} e_k.
\end{align*}

We know that $V(k)$ is isomorphic to the dual $\sl(2)$-module $V(k)^*$. In fact,
if $\{(F^r e_k)^*:r=0,\dots,k\}$ is the dual basis of $\mathcal{B}_k$, then
$(F^k e_k)^*$ is the highest weight vector
of  $V(k)^*$ and the map $j_k:V(k)\to V(k)^*$, given by
\begin{equation}
\label{ktodual}
F^r e_k\mapsto(-1)^r(F^{k-r} e_k)^*
\end{equation}
is, up to a scalar, the unique $\sl(2)$-module isomorphism
between $V(k)$ and $V(k)^*$. Suppose $(a,b,k)$ satisfies the triangle condition.
In this case $V(k)$ occurs with multiplicity
one in $V(a)\otimes V(b)$ and
if
\[
x_k^{a,b}=\frac{a+b-k}2
\]
then
\begin{equation}\label{eq:TensorDominant}
v^{a,b}_k=\sum_{r=0}^{x_k^{a,b}} (-1)^r
\frac{\binom{x_k^{a,b}}{r}}{\binom{x_k^{a,b}+k}{a-r}}\; F^{r} e_a\otimes F^{x_k^{a,b}-r} e_b
\end{equation}
is, up to a scalar, the unique highest weight vector of weight $k$
in $V(a)\otimes V(b)$.
We denote by
\[
\iota_k^{a,b}\in\text{Hom}_{\sl(2)}\big(V(k),V(a)\otimes V(b)\big)
\]
the unique $\sl(2)$-module homomorphism sending $e_k$ to $v^{a,b}_k$.

Let us assume that the four triples
\[
(k,p,q),\quad (p,a,b),\quad (q,b,c),\quad (k,a,c)
\]
satisfy the triangle condition. Let $\phi$ be the map defined by (\ref{estr2}), that is
%Recall that in terms of the notation introduced in \S\ref{Subsec.Basic preliminaries}
\[
 \phi=\mu\circ\big(\iota^{a,b}_{p}\otimes\iota^{b,c}_{q}\big)\circ\iota^{p,q}_{k},
\]
where $\mu:V(a)\otimes V(b)\otimes V(b)\otimes V(c)$ is defined in (\ref{estr1}) by means of the isomorphism $j=j_b:V(b)\to V(b)^*$ given in (\ref{ktodual}). Explicitly, we have
$$
\mu(x\otimes F^r e_b\otimes F^{b-s} e_b\otimes z)=(-1)^r \delta_{r,s} x\otimes z,\quad x\in V(a), z\in V(c), 0\leq r,s\leq b.
$$
As noted at the end of the proof of Theorem \ref{imp2}, we have $\phi=\lambda\iota_k^{a,c}$. Thus
\[
 \phi(e_k)=\lambda\, v^{a,c}_k,
\]
where $v^{a,c}_k$ is defined in  (\ref{eq:TensorDominant}). We will now compute $\lambda$.
First we have
\[
\iota^{p,q}_{k}e_{k}
=\sum_{r_1=0}^{x^{p,q}_{k}}
(-1)^{r_1}\frac{\binom{x^{p,q}_{k}}{r_1}}{\binom{x^{p,q}_{k}+k}{p-r_1}}\;
F^{r_1} e_p
\otimes
                            F^{x^{p,q}_{k}-r_1} e_q.
\]
\begin{align*}
v_p^{a,b}=\iota^{a,b}_{p}e_{p}
&=
\sum_{r_2=0}^{x^{a,b}_{p}} (-1)^{r_2}\frac{\binom{x^{a,b}_{p}}{r_2}}{\binom{x^{a,b}_{p}+p}{a-r_2}}\;
                F^{r_2} e_a\otimes F^{x^{a,b}_{p}-r_2} e_b,\\
v_q^{b,c}=\iota^{b,c}_{q}e_{q}&=
\sum_{r_3=0}^{x^{b,c}_{q}} (-1)^{r_3}\frac{\binom{x^{b,c}_{q}}{r_3}}{\binom{x^{b,c}_{q}+q}{b-r_3}}\;
F^{r_3} e_b\otimes F^{x^{b,c}_{q}-r_3} e_c.
\end{align*}
Thus
\begin{align*}
\phi(e_{k}) =\mu \sum_{r_1=0}^{x^{p,q}_{k}}
(-1)^{r_1}\frac{\binom{x^{p,q}_{k}}{r_1}}{\binom{x^{p,q}_{k}+k}{p-r_1}}\;
F^{r_1}.&\Big(
\sum_{r_2=0}^{x^{a,b}_{p}} (-1)^{r_2}\frac{\binom{x^{a,b}_{p}}{r_2}}{\binom{x^{a,b}_{p}+p}{a-r_2}}\;
F^{r_2} e_a\otimes F^{x^{a,b}_{p}-r_2} e_b\Big) \\
\otimes
F^{x^{p,q}_{k}-r_1}.&\Big(
\sum_{r_3=0}^{x^{b,c}_{q}} (-1)^{r_3}\frac{\binom{x^{b,c}_{q}}{r_3}}{\binom{x^{b,c}_{q}+q}{b-r_3}}\;
F^{r_3} e_b\otimes F^{x^{b,c}_{q}-r_3} e_c\Big).
\end{align*}
Applying Leibniz rule to compute the action of $F^{r_1}$ and $F^{x^{p,q}_{k}-r_1}$ on tensors we get
\begin{align*}
\phi(e_{k}) =
&
\sum_{}
(-1)^{r_1+r_2+r_3}
\frac{\binom{r_1}{r_4}\binom{x^{p,q}_{k}-r_1}{r_5}\binom{x^{p,q}_{k}}{r_1}\binom{x^{a,b}_{p}}{r_2}\binom{x^{b,c}_{q}}{r_3}}
{\binom{x^{p,q}_{k}+k}{p-r_1}\binom{x^{a,b}_{p}+p}{a-r_2}\binom{x^{b,c}_{q}+q}{b-r_3}}\\
&
\times F^{r_2+r_4} e_a \otimes \mu\Big(F^{x^{a,b}_{p}-r_2+r_1-r_4} e_b
\otimes
F^{r_3+r_5} e_b\Big)\otimes F^{x^{b,c}_{q}-r_3+x^{p,q}_{k}-r_1-r_5} e_c.
\end{align*}
Here the sum runs over all $(r_1,r_2,r_3,r_4,r_5)$ allowed by the binomial coefficients in the numerator.
In what follows, all the sums will run over the indicated indices with the restriction that
the factorial numbers involved are non-negative.
In order to compute $\mu$ we need to consider the case when
\[
x^{a,b}_{p}-r_2+r_1-r_4 + r_3+r_5= b
\]
and thus
\begin{align*}
\phi(e_{k}) =
(-1)^{x^{a,b}_{p}}
\sum_{r_1,r_2,r_3,r_4}
(-1)^{r_4+r_3}
&
\frac{\binom{r_1}{r_4}\binom{x^{p,q}_{k}-r_1}{b-x^{a,b}_{p}-r_1+r_2-r_3+r_4}
\binom{x^{p,q}_{k}}{r_1}\binom{x^{a,b}_{p}}{r_2}\binom{x^{b,c}_{q}}{r_3}}
{\binom{x^{p,q}_{k}+k}{p-r_1}\binom{x^{a,b}_{p}+p}{a-r_2}\binom{x^{b,c}_{q}+q}{b-r_3}}\, \\[2mm]
&\hspace{2.4cm}\times F^{r_2+r_4} e_a\otimes F^{x^{a,c}_{k}-r_2-r_4} e_c.
\end{align*}
We know that $\phi(e_k)=\lambda v_k^{a,c}$. Comparing coefficients at $e_a\otimes F^{x_k^{a,c}}e_c$ yields
\[
(-1)^{x^{a,b}_{p}}
\sum_{r_1,r_3}
(-1)^{r_3}
\frac{\binom{x^{p,q}_{k}-r_1}{b-x^{a,b}_{p}-r_1-r_3}
\binom{x^{p,q}_{k}}{r_1}\binom{x^{b,c}_{q}}{r_3}}
{\binom{x^{p,q}_{k}+k}{p-r_1}\binom{x^{a,b}_{p}+p}{a}\binom{x^{b,c}_{q}+q}{b-r_3}}
=
\frac{\lambda}{\binom{x^{a,c}_{k}+k}{a}}.
\]
Replacing $x_3$ by $x_q^{b,c}-r_3$ and using $\binom{s}{t}=\binom{s}{s-t}$ at appropriate places gives
\[
(-1)^{x^{a,b}_{p}+x^{b,c}_{q}}
\sum_{r_1,r_3}
(-1)^{r_3}
\frac{\binom{x^{p,q}_{k}-r_1}{x^{a,c}_{k}-r_3}
\binom{x^{p,q}_{k}}{r_1}\binom{x^{b,c}_{q}}{r_3}}
{\binom{x^{p,q}_{k}+k}{p-r_1}\binom{x^{b,c}_{q}+q}{c-r_3}}
=
\lambda\frac{\binom{x^{a,b}_{p}+p}{a}}{\binom{x^{a,c}_{k}+k}{a}}.
\]
We next concentrate on the inner part of the above double sum.
\begin{lemma}
 \[
\sum_{r_1}
\frac{\binom{x^{p,q}_{k}-r_1}{x^{a,c}_{k}-r_3}\binom{x^{p,q}_{k}}{r_1}}
{\binom{x^{p,q}_{k}+k}{p-r_1}}
=
\frac{x^{p,q}_{k}+k+1}{x^{p,q}_{k}+k+1-p}\frac{\binom{x^{p,q}_{k}}{x^{a,c}_{k}-r_3}}{\binom{x^{a,c}_{k}+k+1-r_3}{x^{p,q}_{k}+k+1-p}}.
\]
\end{lemma}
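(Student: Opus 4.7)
The plan is to reduce the identity to a standard Beta-function evaluation. Set $X=x^{p,q}_{k}$, $Y=x^{a,c}_{k}-r_3$, $M=X+k$, $P=p$, and $N=X-Y$. The first step is to apply the trinomial revision identity
\[
\binom{X-r_1}{Y}\binom{X}{r_1}=\binom{X}{Y}\binom{X-Y}{r_1},
\]
which is verified immediately by observing that both sides equal $X!/(Y!\,r_1!\,(X-r_1-Y)!)$. This lets me pull the factor $\binom{X}{Y}$ outside the sum on the left-hand side, reducing the claim to the $Y$-free identity
\[
\sum_{r_1}\frac{\binom{N}{r_1}}{\binom{M}{P-r_1}}=\frac{M+1}{M+1-P}\cdot\frac{1}{\binom{M-N+1}{M+1-P}},
\]
since $M-N+1=Y+k+1=x^{a,c}_{k}-r_3+k+1$ and $M+1-P=x^{p,q}_{k}+k+1-p$, as required.

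The second step is to insert the standard Beta-integral representation of a reciprocal binomial coefficient,
\[
\frac{1}{\binom{M}{P-r_1}}=(M+1)\int_0^1 s^{M-P+r_1}(1-s)^{P-r_1}\,ds,
\]
swap the (finite) sum with the integral, and factor $s^{M-P}(1-s)^{P-N}$ out of the summand. The remaining inner sum collapses by the binomial theorem:
\[
\sum_{r_1}\binom{N}{r_1}s^{r_1}(1-s)^{N-r_1}=\bigl(s+(1-s)\bigr)^N=1.
\]
What remains is $(M+1)\int_0^1 s^{M-P}(1-s)^{P-N}\,ds=(M+1)B(M-P+1,P-N+1)=(M+1)(M-P)!(P-N)!/(M-N+1)!$, and a direct factorial manipulation of the right-hand side of the reduced identity produces exactly the same expression.

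The only delicate point, rather than a genuine obstacle, is verifying convergence of the Beta integral, which requires $M-P\ge 0$ and $P-N\ge 0$. The first follows from the triangle condition on $(k,p,q)$, while the second holds in the range of $r_3$ actually contributing to the outer double sum; outside this range, the binomials on both sides vanish by the usual conventions, so no information is lost. Alternatively, both sides of the reduced identity are rational functions of the parameters $M$, $N$, $P$ once binomials are interpreted via $\Gamma$-functions, so the identity extends by analytic continuation to all admissible parameter values.
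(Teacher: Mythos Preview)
Your proof is correct and takes a genuinely different route from the paper's. Both proofs reduce to evaluating the same underlying sum, but the methods differ.

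The paper expands all three binomials into factorials and then invokes the Chu--Vandermonde-type identity
\[
\sum_{r=0}^z\binom{x+r}{r}\binom{y-r}{z-r}=\binom{x+y+1}{z},
\]
which it proves by induction on $y$ using Pascal's rule. With the substitutions $x=M-P$, $y=P$, $z=N$ this is exactly your reduced identity $\sum_{r_1}\binom{N}{r_1}/\binom{M}{P-r_1}=(M+1)(M-P)!(P-N)!/(M-N+1)!$, so the two arguments converge at the same combinatorial core. Your use of trinomial revision isolates the $\binom{X}{Y}$ factor more transparently than the paper's bulk factorial expansion, and your Beta-integral evaluation replaces the paper's induction by an analytic one-line collapse via the binomial theorem. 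The trade-off is that the paper's argument is purely discrete and needs no convergence discussion, whereas yours requires checking $M-P\ge 0$ and $P-N\ge 0$; your justification of the latter from the triangle condition $b\le a+p$ (which bounds $r_3$ in the outer sum exactly so that $P-N\ge 0$) is correct, and in fact the right-hand side of the lemma is only well-defined under this same constraint, so nothing is lost.
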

\begin{proof}
Note that if $x$, $y$, $z$ are non-negative integers and $y\ge z$ then
\begin{equation}
\label{wall}
\sum_{r=0}^z\binom{x+r}{r}\binom{y-r}{z-r}=\binom{x+y+1}{z}.
\end{equation}
This is easily seen by induction on $y$ by repeatedly using $\binom{s}{t}=\binom{s-1}{t}+\binom{s-1}{t-1}$.
Using (\ref{wall}) in the equivalent form
\[
\sum_{r=0}^z\frac{(x+r)!\,(y-r)!}{r!\,(z-r)!}=\frac{x!\,(y-z)!\,(x+y+1)!}{z!\,(x+y-z+1)!}
\]
with $x=x_k^{p,q}+k-p$, $y=p$, $z=x^{p,q}_k-x^{a,c}_k+r_3$ yields
\begin{align*}
\sum_{r_1}
\frac{\binom{x^{p,q}_{k}-r_1}{x^{a,c}_{k}-r_3}\binom{x^{p,q}_{k}}{r_1}}
{\binom{x^{p,q}_{k}+k}{p-r_1}}
&=
\sum_{r_1}
\frac{(x^{p,q}_{k})!\,(x^{p,q}_{k}+k-p+r_1)!\,(p-r_1)!}
{(x^{a,c}_{k}-r_3)!\,(x^{p,q}_{k}-r_1-x^{a,c}_{k}+r_3)!\,r_1!\,(x^{p,q}_{k}+k)!} \\
&=
\frac{(x^{p,q}_{k})!}{(x^{a,c}_{k}-r_3)!\,(x^{p,q}_{k}+k)!}
\sum_{r_1}
\frac{(x^{p,q}_{k}+k-p+r_1)!\,(p-r_1)!}
{r_1!\,(x^{p,q}_{k}-x^{a,c}_{k}+r_3-r_1)!} \\
&=
\frac{(x^{p,q}_{k})!}{(x^{a,c}_{k}-r_3)!\,(x^{p,q}_{k}+k)!}\\
&\qquad\times
\frac{(x^{p,q}_{k}+k-p)!\,(p-x^{p,q}_{k}+x^{a,c}_{k}-r_3)!\,(x^{p,q}_{k}+k+1)!}
{(x^{p,q}_{k}-x^{a,c}_{k}+r_3)!\,(k+x^{a,c}_{k}-r_3+1)!}  \\
&=
\frac{x^{p,q}_{k}+k+1}{x^{p,q}_{k}+k+1-p}
\frac{\binom{x^{p,q}_{k}}{x^{a,c}_{k}-r_3}}{\binom{x^{a,c}_{k}+k+1-r_3}{x^{p,q}_{k}+k+1-p}}
\end{align*}
as we wanted to prove.
\end{proof}
From the above lemma we obtain
\[
(-1)^{x^{a,b}_{p}+x^{b,c}_{q}}\frac{x^{p,q}_{k}+k+1}{x^{p,q}_{k}+k+1-p}
\sum_{r_3}
(-1)^{r_3}
\frac{\binom{x^{p,q}_{k}}{x^{a,c}_{k}-r_3}\binom{x^{b,c}_{q}}{r_3}}{\binom{x^{a,c}_{k}+k+1-r_3}{x^{p,q}_{k}+k+1-p}\binom{x^{b,c}_{q}+q}{c-r_3}}
=
\lambda\frac{\binom{x^{a,b}_{p}+p}{a}}{\binom{x^{a,c}_{k}+k}{a}}.
\]

We refer the reader to \S\ref{Subsec.Clebsch-Gordan} for the definition and basic properties of the $6j$-symbol
as well as the meaning of $\Delta$ and $R$ as used below.

\begin{theorem}\label{Thm.Scalar for phi}
\[
\lambda=
C\,
\left\{\begin{matrix}
\frac{q}{2} \; \frac{k}{2} \; \frac{p}{2} \\[2mm]
\frac{a}{2} \; \frac{b}{2} \; \frac{c}{2}
\end{matrix}
\right\},
\]
where
\[
 C=
\tfrac{(-1)^{x_k^{a,c}+b+k}(p+q+k+2)(a+b+p+2)(b+c+q+2)
\Delta(\frac{a}{2},\frac{b}{2},\frac{p}{2})\,\Delta(\frac{p}{2},\frac{q}{2},\frac{k}{2})\,\Delta(\frac{b}{2},\frac{c}{2},\frac{q}{2})}
{4(a+c+k+2)\,\Delta(\frac{a}{2},\frac{c}{2},\frac{k}{2})}.
\]
\end{theorem}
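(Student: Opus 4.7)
The plan is to reduce the identification of $\lambda$ to a direct comparison with the classical Racah single-sum formula for the $6j$-symbol recalled in \S\ref{Subsec.Clebsch-Gordan}. All the hard work is already done in the calculations leading up to the theorem; what remains is purely an exercise in manipulating factorials and choosing the right index shift.

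First I would isolate $\lambda$ in the last displayed identity before the theorem, obtaining
\[
\lambda=\frac{\binom{x^{a,c}_{k}+k}{a}}{\binom{x^{a,b}_{p}+p}{a}}\,(-1)^{x^{a,b}_{p}+x^{b,c}_{q}}\,\frac{x^{p,q}_{k}+k+1}{x^{p,q}_{k}+k+1-p}\;S,
\]
where $S$ denotes the sum over $r_3$ appearing above the theorem. I would then expand every binomial coefficient as a quotient of factorials and substitute $x^{i,j}_{\ell}=(i+j-\ell)/2$ throughout, so that every argument is expressed in terms of the original parameters $a,b,c,p,q,k$.

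Next, I would perform a change of summation index $n=r_3+N$ for an integer $N$ chosen so that $S$ assumes the canonical Racah shape
\[
\sum_n \frac{(-1)^n(n+1)!}{(n-\alpha_1)!\,(n-\alpha_2)!\,(n-\alpha_3)!\,(n-\alpha_4)!\,(\beta_1-n)!\,(\beta_2-n)!\,(\beta_3-n)!},
\]
in which the four $\alpha_i$ arise from the four triples $(p,a,b)$, $(q,b,c)$, $(k,p,q)$, $(k,a,c)$ satisfying the triangle condition, and the three $\beta_j$ are the three standard Racah pair-sums. Once $S$ is in this shape, factoring out the four triangle coefficients $\Delta(\tfrac{a}{2},\tfrac{b}{2},\tfrac{p}{2})$, $\Delta(\tfrac{p}{2},\tfrac{q}{2},\tfrac{k}{2})$, $\Delta(\tfrac{b}{2},\tfrac{c}{2},\tfrac{q}{2})$, $\Delta(\tfrac{a}{2},\tfrac{c}{2},\tfrac{k}{2})$ converts the sum into the $6j$-symbol. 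Combining the resulting prefactor with the prefactor already present in $\lambda$ and simplifying by means of the elementary identities $\binom{n}{k}=n!/(k!(n-k)!)$ and $n\binom{n}{k}=(k+1)\binom{n}{k+1}+\cdots$ yields $C$ in the stated form. The factor $\Delta(\tfrac{a}{2},\tfrac{c}{2},\tfrac{k}{2})$ ends up in the denominator of $C$ because the Racah formula contributes it as a multiplicative factor to the $6j$-symbol, while the original expression for $\lambda$ does not; the other three $\Delta$'s appear in the numerator of $C$ for the symmetric reason.

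The principal obstacle is purely combinatorial: choosing the shift $N$ correctly, verifying that each factorial in the denominator of the resulting sum matches one of the seven canonical Racah arguments, and tracking the cumulative signs $(-1)^{x^{a,b}_{p}+x^{b,c}_{q}}$, $(-1)^{r_3}$ and $(-1)^N$ to recover the stated sign $(-1)^{x_k^{a,c}+b+k}$. Likewise, the three linear factors $(p+q+k+2)$, $(a+b+p+2)$, $(b+c+q+2)$ in the numerator of $C$ and the factor $(a+c+k+2)$ in its denominator come from the differences between $(n+1)!$ in the Racah sum and the factorials present in $S$, while the factor $1/4$ accumulates from the numerous halvings. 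No conceptual input beyond the Racah formula itself is required; the challenge is simply to carry out the factorial bookkeeping without error.
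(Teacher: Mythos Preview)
Your overall strategy---isolate $\lambda$, rewrite the single sum $S$ over $r_3$ as a known closed form for the $6j$-symbol, and then collect the leftover factorials into $C$---is exactly the paper's. The difficulty is that the particular target you have chosen will not be reached by the method you describe.

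After you expand all four binomials in $S$ into factorials, the $r_3$-dependent part has the shape
\[
\frac{(\,\cdot -r_3)!\,(\,\cdot -r_3)!\,(\,\cdot +r_3)!}
     {r_3!\,(\,\cdot -r_3)!\,(\,\cdot -r_3)!\,(\,\cdot -r_3)!\,(\,\cdot +r_3)!},
\]
that is, \emph{three} factorials in the numerator and \emph{five} in the denominator. A linear substitution $n=r_3+N$ (or $n=N-r_3$) preserves this $3{+}5$ distribution; it cannot produce the canonical Racah form
\[
\sum_n \frac{(-1)^n(n+1)!}{(n-\alpha_0)!\,(n-\alpha_1)!\,(n-\alpha_2)!\,(n-\alpha_3)!\,(\beta_1-n)!\,(\beta_2-n)!\,(\beta_3-n)!},
\]
which has a $1{+}7$ distribution. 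Going from one to the other requires a genuine hypergeometric transformation, not just a shift, so the step ``choose $N$ so that $S$ assumes the canonical Racah shape'' fails as stated.

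What the paper does instead is match $S$ against the \emph{alternative} single-sum formula for the $6j$-symbol recorded as \eqref{eqn.def6j} in the Appendix (the one involving the quantities $R^z_{x,y}$), whose summand has exactly the $3{+}5$ factorial pattern above. Concretely, the paper substitutes $r_3=x^{a,c}_k-t$, sets $(a,p,b,q,c,k)=(2j_1,2j_2,2j_3,2j_4,2j_5,2j_6)$, and reads off
$\left\{\begin{smallmatrix} j_1 & j_2 & j_3\\ j_4 & j_5 & j_6\end{smallmatrix}\right\}$
directly from \eqref{eqn.def6j}. A final appeal to the tetrahedral symmetry
$\left\{\begin{smallmatrix} j_1 & j_2 & j_3\\ j_4 & j_5 & j_6\end{smallmatrix}\right\}
=\left\{\begin{smallmatrix} j_4 & j_6 & j_2\\ j_1 & j_3 & j_5\end{smallmatrix}\right\}$
is then needed to obtain the symbol in the arrangement
$\left\{\begin{smallmatrix} q/2 & k/2 & p/2\\ a/2 & b/2 & c/2\end{smallmatrix}\right\}$
stated in the theorem; your outline omits this last step. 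Once you redirect the comparison to \eqref{eqn.def6j} and apply this symmetry, the remaining prefactor bookkeeping you describe goes through and yields the stated $C$.
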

\begin{proof}
From the identity above the theorem we have that
\[
\lambda=
(-1)^{x^{a,b}_{p}+x^{b,c}_{q}}\tfrac{x^{p,q}_{k}+k+1}{x^{p,q}_{k}+k+1-p}
\tfrac{(x^{a,c}_{k}+k)!\,(x^{a,b}_{p}+p-a)!}{(x^{a,c}_{k}+k-a)!\,(x^{a,b}_{p}+p)!}
\sum_{r_3}
(-1)^{r_3}
\frac{\binom{x^{p,q}_{k}}{x^{a,c}_{k}-r_3}\binom{x^{b,c}_{q}}{r_3}}{\binom{x^{a,c}_{k}+k+1-r_3}{x^{p,q}_{k}+k+1-p}\binom{x^{b,c}_{q}+q}{c-r_3}}.
\]
If we replace $r_3$ by $x^{a,c}_{k}-t$, the above sum is
\begin{align*}
\sum_{r_3}
(-1)^{r_3}
&
\frac{\binom{x^{p,q}_{k}}{x^{a,c}_{k}-r_3}\binom{x^{b,c}_{q}}{r_3}}{\binom{x^{a,c}_{k}+k+1-r_3}{x^{p,q}_{k}+k+1-p}\binom{x^{b,c}_{q}+q}{c-r_3}}
=
\tfrac{x^{p,q}_{k}!\,x^{b,c}_{q}!\,(x^{p,q}_{k}+k+1-p)!}{(x^{b,c}_{q}+q)!} \\
&\times
\sum_{r_3}
\tfrac{(-1)^{r_3}(x^{a,c}_{k}-x^{p,q}_{k}+p-r_3)!\,(c-r_3)!\,(x^{b,c}_{q}+q-c+r_3)!\,}
{(x^{a,c}_{k}-r_3)!\,(x^{p,q}_{k}-x^{a,c}_{k}+r_3)!\;r_3!\,(x^{b,c}_{q}-r_3)!\,(x^{a,c}_{k}+k+1-r_3)!}  \\
&=
(-1)^{x^{a,c}_{k}}\;\tfrac{x^{p,q}_{k}!\,x^{b,c}_{q}!\,(x^{p,q}_{k}+k+1-p)!}{(x^{b,c}_{q}+q)!} \\
&\times
\sum_{t}
\tfrac{(-1)^{t}(-x^{p,q}_{k}+p+t)!\,(c-x^{a,c}_{k}+t)!\,(x^{b,c}_{q}+q-c+x^{a,c}_{k}-t)!\,}
{t!\,(x^{p,q}_{k}-t)!\,(x^{a,c}_{k}-t)!\,(x^{b,c}_{q}-x^{a,c}_{k}+t)!\,(k+1+t)!}.
\end{align*}
Define $j_1$, $j_2$, $j_3$, $j_4$, $j_5$, $j_6$  by
\begin{align*}
  a&=2j_1,
 &p&=2j_2,
 &b&=2j_3, \\
 q&=2j_4,
 &c&=2j_5,
 &k&=2j_6.
\end{align*}
Then above sum together with (\ref{eqn.def6j}) and $a+c\equiv k\mod 2$ yield
\begin{align*}
\sum_{r_3}
(-1)^{r_3}
&
\frac{\binom{x^{p,q}_{k}}{x^{a,c}_{k}-r_3}\binom{x^{b,c}_{q}}{r_3}}{\binom{x^{a,c}_{k}+k+1-r_3}{x^{p,q}_{k}+k+1-p}\binom{x^{b,c}_{q}+q}{c-r_3}} \\
&=
(-1)^{j_1+j_5-j_6}\;\tfrac{(j_2+j_4-j_6)!\,(j_3-j_4+j_5)!\,(-j_2+j_4+j_6+1)!}{(j_3+j_4+j_5)!} \\
&\qquad \times
\sum_{t}
\tfrac{(-1)^{t}(j_2-j_4+j_6+t)!\,(-j_1+j_5+j_6+t)!\,(j_1+j_3+j_4-j_6-t)!\,}
{t!\,(j_2+j_4-j_6-t)!\,(j_1+j_5-j_6-t)!\,(-j_1+j_3-j_4+j_6+t)!\,(2j_6+1+t)!} \\
&=
(-1)^{j_2+j_4+j_6}\;\tfrac{(j_2+j_4-j_6)!\,(j_3-j_4+j_5)!\,(-j_2+j_4+j_6+1)!}{(j_3+j_4+j_5)!}
\tfrac{R_{5,6}^{1}R_{2,6}^{4}}{R_{2,3}^{1}R_{3,5}^{4}}
\left\{\begin{matrix}
j_1 \; j_2 \; j_3 \\
j_4 \; j_5 \; j_6
\end{matrix}
\right\}.
\end{align*}
Therefore
\begin{align*}
\lambda=
(-1)^{j_1+2j_3+j_5+j_6}
&
\tfrac{j_2+j_4+j_6+1}{-j_2+j_4+j_6+1}
\tfrac{(j_1+j_5+j_6)!\, (-j_1+j_2+j_3)!}{(-j_1+j_5+j_6)!\, (j_1+j_2+j_3)!}  \\
&\times
\tfrac{(j_2+j_4-j_6)!\,(j_3-j_4+j_5)!\,(-j_2+j_4+j_6+1)!}{(j_3+j_4+j_5)!}
\tfrac{R_{5,6}^{1}R_{2,6}^{4}}{R_{2,3}^{1}R_{3,5}^{4}}
\left\{\begin{matrix}
j_1 \; j_2 \; j_3 \\
j_4 \; j_5 \; j_6
\end{matrix}
\right\}
                                                          \\%%%%%%%%%%%%%%%%%%%%%
=(-1)^{j_1+2j_3+j_5+j_6}
&
\tfrac{(j_2+j_4+j_6+1)(j_1+j_2+j_3+1)(j_3+j_4+j_5+1)}{(j_1+j_5+j_6+1)}
\tfrac{\Delta_{123}\Delta_{246}\Delta_{345}}{\,\Delta_{156}}
\left\{\begin{matrix}
j_1 \; j_2 \; j_3 \\
j_4 \; j_5 \; j_6
\end{matrix}
\right\}.
\end{align*}
The theorem now follows from the symmetry
$
\left\{\begin{matrix}
j_1 \; j_2 \; j_3 \\
j_4 \; j_5 \; j_6
\end{matrix}
\right\}
=
\left\{\begin{matrix}
j_4 \; j_6 \; j_2 \\
j_1 \; j_3 \; j_5
\end{matrix}
\right\} $.
\end{proof}

%=================================================================
\section{Appendix. The Clebsch-Gordan coefficients and the $6j$-symbol}
\label{Subsec.Clebsch-Gordan}
%=================================================================
In this appendix we recall the basic facts about the $6j$-symbol the we needed in this paper.
We will mainly follow \cite{VMK}.

Let $2j_1$, $2j_2$ and $2j_3$ be three non-negative integers and define (see \cite[\S8.2, eq.(1)]{VMK})
\[
 \Delta(j_1,j_2,j_3)=\sqrt{\frac{(j_1+j_2-j_3)!(j_1-j_2+j_3)!(-j_1+j_2+j_3)!}{(j_1+j_2+j_3+1)!}}
 \]
if $(2j_1, 2j_2, 2j_3)$  satisfies the triangle condition
(see Definition \ref{def.triangle});
otherwise set $\Delta(j_1,j_2,j_3)=0$.

If $2m_1$, $2m_2$ and $2m_3$ are three integers such that $|m_i|\le j_i$ for $i=1,2,3$,
we recall that the
corresponding \emph{Clebsch-Gordan coefficient} is zero, if $m_1+m_2\ne m_3$,
and otherwise is (see \cite[\S8.2, eq.(3)]{VMK})
 \begin{align*}
 C^{j_3,m_3}_{j_1,m_1;\,j_2,m_2}&=\Delta(j_1,j_2,j_3)\sqrt{(2j_3+1) } \\[2mm]
 &
 \times \sqrt{(j_1+m_1)!(j_1-m_1)!(j_2+m_2)!(j_2-m_2)!(j_3+m_3)!(j_3-m_3)! } \\[2mm]
&
 \times
 \sum_r\tfrac{(-1)^r}{r!(j_1+j_2-j_3-r)!(j_1-m_1-r)!(j_2+m_2-r)!(j_3-j_2+m_1+r)!(j_3-j_1-m_2+r)!},
 \end{align*}
where the sum runs over all $r$ such that all the numbers under the factorial symbol are non-negative.

Let $a=2j_1$, $b=2j_2$ and $k=2j$.
If we define (see \cite[\S3.1.1]{VMK})
\[
\mathcal{M}_{j,m}=\sqrt{\frac{(j+m)!}{(j-m)!}}\;F^{j-m}e_k
\]
then (see \cite[\S8.2, eq.(10)]{VMK})
{\small
\begin{equation*}
\left\{
\{\mathcal{M}_{j_1}\otimes\mathcal{M}_{j_2}\}_{j,m}:=
\sum_{m_1+m_2=m} C^{j,m}_{j_1,m_1;\;j_2,m_2}\;
\mathcal{M}_{j_1,m_1}\!\otimes\! \mathcal{M}_{j_2,m_2}:m=-j,-j+1,\dots,j
\right\}
\end{equation*}
}is a basis of the unique $\sl(2)$-submodule of $V(a)\otimes V(b)$ isomorphic to $V(k)$,
and in fact, the map
\begin{align*}
V(k) & \to V(a)\otimes V(b) \\
\mathcal{M}_{j,m} &\mapsto\{\mathcal{M}_{j_1}\otimes\mathcal{M}_{j_2}\}_{j,m}
\end{align*}
is exactly
$\frac{1}{\Delta(j_1,j_2,j)}\sqrt{\frac{2j+1}{j_1+j_2+j+1}}\;\iota_k^{a,b}$.

\medskip

The (classical) \emph{Racah-Wigner $6j$-symbol} is a real number
$\left\{
\begin{matrix}
j_1 \;  j_2 \;  j_3 \\
j_4 \;  j_5 \;  j_6
\end{matrix}
\right\}$
associated
to six
non-negative
half-integer numbers $j_1$, $j_2$, $j_3$, $j_4$, $j_5$ and $j_6$.
The $6j$-symbol plays a central role in angular momentum theory since they describe
the recoupling of three angular momenta. Some classical references to them,
other than \cite{VMK}, are \cite{CFS}, \cite{Ed}, \cite{RBMW}, etc.
Let us recall its definition in terms of
the representation theory of $\sl(2)$.

If one of following four triples
\[
 (2j_1, 2j_2, 2j_3),\; (2j_1, 2j_5, 2j_6),\; (2j_4, 2j_2, 2j_6),\; (2j_4, 2j_5, 2j_3)
 \]
does not satisfy the triangle condition then
$\left\{
\begin{matrix}
j_1 \;  j_2 \;  j_3 \\
j_4 \;  j_5 \;  j_6
\end{matrix}
\right\}$ is zero by definition.
If all the above four triples do satisfy the triangle condition,
which may be depicted by the following tetrahedron,
\setlength{\unitlength}{10pt}
\begin{center}
\begin{picture}(10,7)(0,-2)
 \put(0,0){\line(1,2){2.1}}
 \put(0,0){\line(2,-1){3}}
\put(3,-1.5){\line(1,1){2.4}}
\put(3.05,-1.5){\line(-1,6){.95}}
 \put(2.1,4.2){\line(1,-1){3.3}}
 \multiput(0,0)(1.45,.24){4}{\line(6,1){1}}
 \put(3.4,3.0){$2j_3$}%
 \put(-0.1,2.7){$2j_4$}
 \put(2.5,1.8){$2j_2$}%
 \put(4.5,-0.8){$2j_1$}
 \put(0.0,-1.2){$2j_6$}%
 \put(1.1,0.6){$2j_5$}%
\end{picture}
\end{center}
let
\begin{align}\label{Rel.abc...j}
  a&=2j_1,
 &b&=2j_2,
 &p&=2j_3, \\\notag
 c&=2j_4,
 &k&=2j_5,
 &q&=2j_6.
\end{align}
The $6j$-symbols
$\left\{
\begin{matrix}
j_1 \;  j_2 \;  j_3 \\
j_4 \;  j_5 \;  j_6
\end{matrix}
\right\}$
are the coefficients needed to express the $\sl(2)$-module homomorphism
\[
\big(\iota^{a,b}_{p}\otimes 1 \big)\circ\iota^{p,c}_{k}:V(k)\to V(p)\otimes V(c)\to V(a)\otimes V(b)\otimes V(c)
\]
as a linear combination of the $\sl(2)$-module homomorphisms
\[
\big(1 \otimes \iota^{b,c}_{q}\big)\circ\iota^{a,q}_{k}:V(k)\to V(a)\otimes V(q)\to V(a)\otimes V(b)\otimes V(c)
\]
as $q$ varies. More precisely, the following sets
\[
 \big\{\big(\iota^{a,b}_{p}\otimes 1 \big)\circ\iota^{p,c}_{k}:p\in\Z\text{ and }
\big(\iota^{a,b}_{p}\otimes 1 \big)\circ\iota^{p,c}_{k}\ne0 \big\}
\]
\[
 \big\{\big(1 \otimes \iota^{b,c}_{q}\big)\circ\iota^{a,q}_{k}:q\in\Z\text{ and }
\big(1 \otimes \iota^{b,c}_{q}\big)\circ\iota^{a,q}_{k}\ne0 \big\}
\]
are two different bases of $\text{Hom}_{\sl(2)}\big(V(k),V(a)\otimes V(b)\otimes V(c)\big)$
and the $6j$-symbol describe the transition matrix between these two bases, that is
$\left\{
\begin{matrix}
j_1 \; j_2 \; j_3 \\
j_4 \; j_5 \; j_6
\end{matrix}
\right\}$
is implicitly defined by the following identity
(see \eqref{Rel.abc...j})
\begin{align*}
 &\tfrac{(-1)^{j_1-j_2-j_4+j_5}}
{(j_1+j_2+j_3+1)(j_3+j_4+j_5+1)\Delta(j_1,j_2,j_3)\Delta(j_3,j_4,j_5)}
 \big(\iota^{a,b}_{p}\otimes 1 \big)\circ\iota^{p,c}_{k} \\
&=
\sum_{q}
\tfrac{(-1)^{q}(q+1)}
{(j_2+j_4+j_6+1)(j_1+j_6+j_5+1)\Delta(j_2,j_4,j_6)\Delta(j_1,j_6,j_5)}
\left\{
\begin{matrix}
j_1 \; j_2 \; j_3 \\
j_4 \; j_5 \; j_6
\end{matrix}
\right\}
\big(1 \otimes \iota^{b,c}_{q}\big)\circ\iota^{a,q}_{k}.
\end{align*}
which is equivalent to say that
\begin{align*}
 \frac{(-1)^{j_1-j_2-j_4+j_5}}{\sqrt{2j_3+1}}
 &
 C^{j_3,m_3}_{j_1,m_1;\;j_2,m_2} C^{j_5,m_5}_{j_3,m_3;\;j_4,m_4} \\
&=
\sum_{j_6}(-1)^{2j_6}\sqrt{2j_6+1}
\left\{\begin{matrix}
j_1 \; j_2 \; j_3 \\
j_4 \; j_5 \; j_6
\end{matrix}
\right\}
 C^{j_6,m_6}_{j_2,m_2;\;j_4,m_4}
 C^{j_5,m_5}_{j_1,m_1;\;j_6,m_6}
\end{align*}
for all $m_i$, $i=1,\dots,6$, such that  $|m_i|\le j_i$ and
$m_1+m_2=m_3$, $m_3+m_4=m_5$, $m_2+m_4=m_6$ and $m_1+m_6=m_5$.
This identity is derived from \cite[\S8.7.5, eq.(36)]{VMK} and the
symmetry properties of the Clebsch-Gordan coefficients \cite[\S8.4.2, eq.(5)]{VMK}.

If we set
$
\Delta_{x,y,z}=\Delta(j_x,j_y,j_z),
$
then the $6j$-symbol can be explicitly expressed as
(see \cite[\S9.2.1, eq.(1)]{VMK})
\begin{align*}
\left\{\begin{matrix}
j_1 \; j_2 \; j_3 \\
j_4 \; j_5 \; j_6
\end{matrix}
\right\}
&=\Delta_{1,2,3}\Delta_{3,4,5}\Delta_{2,4,6}\Delta_{1,5,6}\\
&\times
\sum_{t}
\frac{(-1)^t(t+1)!}{(t-\alpha_0)!\,(t-\alpha_1)!\,(t-\alpha_2)!\,(t-\alpha_3)!\,(\beta_1-t)!\,(\beta_2-t)!\,(\beta_3-t)!}
\end{align*}
where $t$ runs from $\max\{\alpha_0,\alpha_1,\alpha_2,\alpha_3\}$ to $\min\{\beta_1,\beta_2,\beta_3\}$ and
\begin{align*}
\alpha_0&=j_1+j_2+j_3, &       &                 \\
\alpha_1&=j_1+j_5+j_6, &\beta_1&=j_2+j_3+j_5+j_6, \\
\alpha_2&=j_4+j_2+j_6, &\beta_2&=j_1+j_3+j_4+j_6, \\
\alpha_3&=j_4+j_5+j_3, &\beta_3&=j_1+j_2+j_4+j_5.
\end{align*}
Also, if
\[
R_{x,y}^{z}=\sqrt{\tfrac{(j_x+j_y-j_z)!}{(j_x-j_y+j_z)!\,(-j_x+j_y+j_z)!\,(j_x+j_y+j_z+1)!}}.
\]
then (see \cite[\S9.2.1, eq.(5)]{VMK})
\begin{align}\label{eqn.def6j}
  \left\{\begin{matrix}
j_1 \; j_2 \; j_3 \\
j_4 \; j_5 \; j_6
\end{matrix}
\right\}
&=(-1)^{j_1+j_2+j_4+j_5}
\frac{R_{2,3}^{1}R_{3,5}^{4}}{R_{5,6}^{1}R_{2,6}^{4}} \\ \notag
\times\sum_{t}&
\frac{(-1)^t\,(-j_1+j_5+j_6+t)!\,(j_2-j_4+j_6+t)!\,(j_1+j_3+j_4-j_6-t)!}
{t!\,(j_1\!+\!j_5\!-\!j_6\!-\!t)!\,(j_2\!+\!j_4\!-\!j_6\!-\!t)!\,(-j_1\!+\!j_3\!-\!j_4\!+\!j_6\!+\!t)!\,(2j_6\!+\!1\!+\!t)!}.
\end{align}
We need the following three properties of the $6j$-symbol (see \cite[\S9.4.2]{VMK}):
\begin{enumerate}[(i)]
\item \label{item.6j1}The $6j$-symbol is invariant under the permutation of any two columns.
\item \label{item.6j2}The $6j$-symbol is invariant if upper and lower arguments are interchanged in any two columns.
\item \label{item.6j3}If all the triples
\[
 (2j_1, 2j_2, 2j_3),\; (2j_1, 2j_5, 2j_6),\; (2j_4, 2j_2, 2j_6),\; (2j_4, 2j_5, 2j_3)
 \]
satisfy the triangle condition, but one of them is a degenerate triangle, then
$  \left\{\begin{matrix}
j_1 \; j_2 \; j_3 \\
j_4 \; j_5 \; j_6
\end{matrix}
\right\}\ne0$.
Indeed, if one of the above triples corresponds to a degenerate triangle, then
\eqref{item.6j1} and \eqref{item.6j2} imply that we may assume $j_6=j_1+j_5$.
Now it follows  from \eqref{eqn.def6j} that
\begin{align*}
  \left\{\begin{matrix}
j_1 \; j_2 \; j_3 \\
j_4 \; j_5 \; j_6
\end{matrix}
\right\}
&=(-1)^{j_1+j_2+j_4+j_5}
\frac{R_{2,3}^{1}R_{3,5}^{4}}{R_{5,6}^{1}R_{2,6}^{4}} \\
&
\frac{(-j_1+j_5+j_6)!\,(j_2-j_4+j_6)!\,(j_1+j_3+j_4-j_6)!}
{(j_2\!+\!j_4\!-\!j_6\!)!\,(-j_1\!+\!j_3\!-\!j_4\!+\!j_6\!)!\,(2j_6\!+\!1\!)!}\ne0.
 \end{align*}
\end{enumerate}
The following lemma shows that, under certain additional conditions,
other $6j$-symbols are non-zero.

\begin{lemma}\label{lemma.non-zero}
Let   $j_1$, $j_2$, $j_3$, $j_4$, $j_5$ and $j_6$ be
non-negative
half-integer such that $j_6=j_1+j_5$, $j_2=j_3$ and all the triples
\[
 (2h, 2j_2, 2j_3),\; (2h, 2j_5, 2j_6),\; (2j_4, 2j_2, 2j_6),\; (2j_4, 2j_5, 2j_3)
 \]
satisfy the triangle condition for  $h=j_1$ and $h=j_1+1$.
If
$
  \left\{\begin{matrix}
j_1\!+\!1 \; j_2 \; j_3 \\
\;\;j_4 \;\; \; j_5 \; j_6
\end{matrix}
\right\}=0
$
then
$
  \left\{\begin{matrix}
j_1\!+\!2 \; j_2 \; j_3 \\
\;\;j_4 \;\; \; j_5 \; j_6
\end{matrix}
\right\}\ne0
$
and
$
  \left\{\begin{matrix}
j_1\!+\!3 \; j_2 \; j_3 \\
\;\;j_4 \;\; \; j_5 \; j_6
\end{matrix}
\right\}\ne0
$.
\end{lemma}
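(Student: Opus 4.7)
My plan is to attack the lemma directly via the explicit sum formula \eqref{eqn.def6j}. Set $h = j_1 + n$ and write $P(n)$ for the inner sum on the right-hand side of \eqref{eqn.def6j}. Since the prefactors (the sign and the ratio of $R$'s) are nonzero under our triangle hypotheses, the $6j$-symbol at $h$ vanishes iff $P(n) = 0$. The key simplification is that the hypothesis $j_6 = j_1 + j_5$ forces the factorial $(h+j_5-j_6-t)! = (n-t)!$, restricting the summation index to $t \in \{0,1,\dots,n\}$; so $P(n)$ has at most $n+1$ terms, i.e. $1,2,3,4$ terms for $n = 0,1,2,3$. Introducing the non-negative integers $\alpha := j_2 - j_4 + j_5$ and $\beta := j_2 + j_4 - j_5$ (non-negativity coming from the triangle condition on $(j_2,j_4,j_5)$), the remaining factorials further restrict the range to $\max(0,n-\alpha) \le t \le \min(n,\beta - j_1)$.

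The computational plan is: (i) verify that $P(0)$ is a single strictly positive term; (ii) expand $P(1)$ as a two-term sum, clear denominators, and show that $P(1) = 0$ is equivalent to the polynomial identity
\[
j_4(j_4+1) - j_2(j_2+1) = (j_6 - j_1)(j_6 + 1),
\]
which as a byproduct forces $\alpha \ge 1$ and $\beta - j_1 \ge 1$ (so that $P(2)$ and $P(3)$ genuinely have $3$ and $4$ terms); (iii) expand $P(2)$ and $P(3)$, substitute the identity from (ii), and verify that in each case the result factors as a strictly positive factorial ratio times an explicit polynomial in the $j$'s that cannot vanish under the triangle hypotheses.

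The main obstacle is the sheer bulk of the algebra in step (iii), especially for $P(3)$. A cleaner route is to invoke the classical three-term recurrence for the $6j$-symbol in its first argument,
\[
E(h+1)\, X_{h+1} + Y(h)\, X_h + E(h)\, X_{h-1} = 0,
\]
where $X_h := \{h, j_2, j_3; j_4, j_5, j_6\}$ and $E(h)$ carries the triangle factors for $(h,j_2,j_3)$ and $(h,j_5,j_6)$. Because $(j_1, j_5, j_6)$ is a degenerate triangle, $E(j_1) = 0$; the recurrence at $h = j_1$ together with $X_{j_1} \ne 0$ (property (iii) of \S\ref{Subsec.Clebsch-Gordan}) and $X_{j_1+1} = 0$ forces $Y(j_1) = 0$. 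Applying the recurrence at $h = j_1+1$ yields $X_{j_1+2} = -E(j_1+1)\, X_{j_1}/E(j_1+2)$, which is nonzero since $E(j_1+1)$ and $E(j_1+2)$ involve only strictly non-degenerate triangles. Applying it at $h = j_1+2$ gives $X_{j_1+3} = -Y(j_1+2)\, X_{j_1+2}/E(j_1+3)$, so the lemma reduces to checking $Y(j_1+2) \ne 0$; since $Y(h)$ is an explicit rational function of $h(h+1)$ whose zero at $h = j_1$ is already accounted for, ruling out a second coincidence at $h = j_1 + 2$ amounts to a finite, elementary calculation in the parameters.
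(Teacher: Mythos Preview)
Your recurrence approach is precisely the paper's argument: the degeneracy $j_6=j_1+j_5$ makes the lowest term in the recurrence at $h=j_1$ vanish, so together with $X_{j_1}\ne 0$ (property (iii)) and $X_{j_1+1}=0$ one obtains that the middle coefficient vanishes at $h=j_1$; two further applications of the recurrence then give $X_{j_1+2}\ne0$ and $X_{j_1+3}\ne0$ once the middle coefficient at $h=j_1+2$ is shown to be nonzero.

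The one point you gloss over is the case $j_1=0$. In the paper's normalization the middle coefficient is $F(h)=-(2h+1)\,h(h+1)\big(h(h+1)-C\big)$ with $C$ independent of $h$, so for $j_1>0$ the vanishing $F(j_1)=0$ pins down $C=j_1(j_1+1)$ and immediately gives $F(j_1+2)\neq 0$. But when $j_1=0$ the zero $F(0)=0$ comes from the trivial factor $h(h+1)$ and says nothing about $C$; the paper therefore treats $j_1=0$ separately, computing the two-term sum for $X_1$ directly from \eqref{eqn.def6j} to extract $j_2(j_2+1)+j_6(j_6+1)=j_4(j_4+1)$ and hence $F(2)=-180\ne 0$. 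Your phrase ``a finite, elementary calculation in the parameters'' is broad enough to cover this, but the case split deserves to be flagged.

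Incidentally, your first approach already contains a cleaner fix: the two-term expansion of $P(1)=0$ yields $j_4(j_4+1)-j_2(j_2+1)=(j_6-j_1)(j_6+1)$ uniformly in $j_1$, and one checks (using $j_6=j_1+j_5$) that this identity is exactly $C=j_1(j_1+1)$. So a hybrid argument --- derive the constraint on $C$ from the direct $P(1)$ computation, then run the recurrence --- avoids the case split altogether and is marginally tidier than the paper's route.
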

\begin{proof}
The Biedenharn-Elliott identity yields, in particular, the following
three-term recurrence relation (see \cite[pag. 1963]{SG})
\begin{align}\label{eq.BE}
i_1E(i_1+1)  \left\{\begin{matrix}
i_1\!\!+\!\!1 \; i_2 \; i_3 \\
\;\; i_4 \;\; i_5 \; i_6
\end{matrix}
\right\}
+F(i_1)  \left\{\begin{matrix}
i_1 \; i_2 \; i_3 \\
i_4 \; i_5 \; i_6
\end{matrix}
\right\}
+(i_1+1)E(i_1)  \left\{\begin{matrix}
i_1\!\!-\!\!1 \; i_2 \; i_3 \\
\;\; i_4\;\; i_5 \; i_6
\end{matrix}
\right\}=0
\end{align}
where
\begin{align*}
F(i_1)=
(2i_1 + 1)\big(&
 i_1(i_1+1)(-i_1(i_1+1) + i_2(i_2+1) + i_3(i_3+1)) \\
 + &i_5(i_5+1)( i_1(i_1+1) + i_2(i_2+1) - i_3(i_3+1)) \\
 + &i_6(i_6+1)( i_1(i_1+1) - i_2(i_2+1) + i_3(i_3+1)) \\
 - &2i_1(i_1+1)i_4(i_4+1)
\big)
\end{align*}
and
\begin{align*}
E(i_1)\!=\!
 \sqrt{\big(i_1^2 - (i_2-i_3)^2\big)\big((i_2+i_3+1)^2 - i_1^2\big)
 \big(i_1^2 - (i_5-i_6)^2\big)\big((i_5+i_6+1)^2 - i_1^2\big)}.
\end{align*}

If we fix $(i_2,i_3,i_4,i_5,i_6)=(j_2,j_3,j_4,j_5,j_6)$ we obtain
\begin{align*}
E(i_1)&=
 \sqrt{i_1^2 \big((2j_2+1)^2 - i_1^2\big)\big(i_1^2 - j_1^2\big)\big((j_1+2j_5+1)^2 - i_1^2\big)} \\
 F(i_1)&=
-(2i_1 + 1)i_1(i_1+1) \\
&\hspace{1cm}\times(
 i_1(i_1+1) - 2j_2(j_2+1) - j_5(j_5+1) - j_6(j_6+1) + 2j_4(j_4+1)
),
\end{align*}
and we point out that the triangle conditions satisfied by
$(2(j_1+1), 2j_5, 2j_6)$ and $(2(j_1+1), 2j_2, 2j_3)$
imply that
\[
E(j_1+1)\ne0.
\]
We also claim that  $F(j_1+2)\ne0$, and this will be proved later by
considering separately the cases $j_1=0$ and $j_1>0$.

Assume that
$
 \left\{\begin{matrix}
j_1\!+\!1 \; j_2 \; j_3 \\
\;\;j_4 \;\; \; j_5 \; j_6
\end{matrix}
\right\}= 0
$.
Since $j_6=j_5+j_1$ it follows that
$\left\{\begin{matrix}
j_1 \; j_2 \; j_3 \\
j_4 \; j_5 \; j_6
\end{matrix}
\right\}\ne0$
(see \eqref{item.6j3} above)
and
$\left\{\begin{matrix}
j_1\!-\!1 \; j_2 \; j_3 \\
\;\;j_4 \;\; \; j_5 \; j_6
\end{matrix}
\right\}= 0$ (the triple $(2(j_1-1),2j_5,2j_6)$ does not satisfy the triangle condition).
Since $E(j_1+1)\ne0$,
it follows, from
the recurrence relation \eqref{eq.BE}
applied to $i_1=j_1+1$, that
$  \left\{\begin{matrix}
j_1\!+\!2 \; j_2 \; j_3 \\
\;\;j_4 \;\; \; j_5 \; j_6
\end{matrix}
\right\}\ne 0$.

Now, accepting that $F(j_1+2)\ne0$,
the recurrence relation \eqref{eq.BE}
applied to $i_1=j_1+2$ implies that
$\left\{\begin{matrix}
j_1\!+\!3 \; j_2 \; j_3 \\
\;\;j_4 \;\; \; j_5 \; j_6
\end{matrix}
\right\}\ne0$ as we wanted to prove.

It remains to be proved that $F(j_1+2)\ne0$.
From the recurrence relation \eqref{eq.BE}
applied to $i_1=j_1$ we obtain that $F(j_1)=0$
and, if $j_1>0$, it follows that
\begin{equation*}\label{eq.Cond1}
  j_1(j_1+1) - 2j_2(j_2+1) - j_5(j_5+1) - j_6(j_6+1) + 2j_4(j_4+1)=0,
\end{equation*}
which implies that $F(j_1+2)\ne0$.
If $j_1=0$ then
\begin{align*}
0&=\left\{\begin{matrix}
j_1\!+\!1 \; j_2 \; j_3 \\
\;\;j_4 \;\; \; j_5 \; j_6
\end{matrix}
\right\} \\
&=\left\{\begin{matrix}
1 \; j_2 \; j_2 \\
j_4 \; j_6 \; j_6
\end{matrix}
\right\} \\
&=(-1)^{1+j_2+j_4+j_6}
\frac{R_{2,3}^{1}R_{3,5}^{4}}{R_{5,6}^{1}R_{2,6}^{4}} \\
& \times\sum_{t=0}
\frac{(-1)^t\,(-1+2j_6+t)!\,(j_2-j_4+j_6+t)!\,(1+j_2+j_4-j_6-t)!}
{t!\,(1-t)!\,(j_2+j_4-j_6-t)!\,(-1+j_2-j_4+j_6+t)!\,(2j_6+1+t)!}.
\end{align*}
and hence
\begin{align*}
& \frac{(-1+2j_6)!\,(j_2-j_4+j_6)!\,(1+j_2+j_4-j_6)!}
{(j_2+j_4-j_6)!\,(-1+j_2-j_4+j_6)!\,(2j_6+1)!} \\
&\hspace{2cm}-
 \frac{(2j_6)!\,(j_2-j_4+j_6+1)!\,(j_2+j_4-j_6)!}
{(j_2+j_4-j_6-1)!\,(j_2-j_4+j_6)!\,(2j_6+2)!}=0,
\end{align*}
or
\begin{align*}
 {(j_2-j_4+j_6)(1+j_2+j_4-j_6)(j_6+1)} -
 {(j_2-j_4+j_6+1)(j_2+j_4-j_6)j_6}=0,
\end{align*}
which implies $j_2(j_2+1)+j_6(j_6+1) - j_4(j_4+1)=0$
and therefore
\begin{align*}
 F(2)= -30\big(6 - 2j_2(j_2+1) - j_5(j_5+1) - j_6(j_6+1) + 2j_4(j_4+1)\big) = -180.
\end{align*}
\end{proof}

%==================================================

\end{document}